\numberwithin{equation}{section}
\newtheorem{theorem}{Theorem}[section]
\newtheorem{lemma}[theorem]{Lemma}
\newtheorem{proposition}[theorem]{Proposition}
\newtheorem{remark}[theorem]{Remark}
\theoremstyle{definition}
\newtheorem{definition}[theorem]{Definition}
\newcommand{\bR}{\mathbb R}
\newcommand{\Div}{\operatorname{div}}
\newcommand{\curl}{\operatorname{curl}}
\renewcommand{\epsilon}{\varepsilon}
\providecommand{\abs}[1]{\left\vert#1\right\vert}
\providecommand{\norm}[1]{\left\Vert#1\right\Vert}
\DeclareMathOperator{\diverge}{div}
\providecommand{\norm}[1]{\left\Vert#1\right\Vert}
\def\dt{\partial_t}
\def\pa{\partial}
\def\RRvert2{\right \vert\! \right\vert}
\def\Lvert3{\left \vert\!\left\vert\!\left\vert}
\def\Rvert3{\right \vert\!\right\vert\!\right\vert}
\def\nab{\nabla}
\def\dt{\partial_t}
\def\hal{\frac{1}{2}}
\def\ls{\lesssim}
\def\naba{\nab_{\mathcal{A}}}
\def\nabak{\nab_{\mathcal{A}^\kappa}}
\def\diva{\diverge_{\mathcal{A}}}
\def\divak{\diverge_{\mathcal{A}^\kappa}}
\def\a{\mathcal{A}}
\def\i{\mathcal{I}}
\def\j{\mathcal{J}}
\def\fj1{\mathcal{J}^{-1}}
\def\bp{\bar\partial}
\def\nak{\nabla_{\mathcal{A}^\kappa}}
\def\ak{{\mathcal{A}^\kappa}}
\def\fk{{\psi^\kappa}}
\title[Free-surface incompressible elastodynamics]{Well-posedness of the free boundary problem in incompressible elastodynamics under the mixed type stability condition}
\author{Xumin Gu}
\address[X. Gu]{School of Mathematics\\ Shanghai University of Finance and Economics\\
Shanghai 200433, China
\newline \indent and
\newline \indent The Institute of Mathematical Sciences\\
The Chinese University of Hong Kong\\
Shatin, NT, Hong Kong}
\email{gu.xumin@shufe.edu.cn}
\author{Fan Wang}
\address[F. Wang]{Department of Mathematics\\
Southwest Jiaotong University\\
Chengdu, 610031, P. R. China.
}
\email{wangf767@swjtu.edu.cn}
\thanks{X. Gu was supported by the National Natural Science Foundation of China (No.~11601305).}
\thanks{F. Wang was supported by the Fundamental Research Funds for the Central Universities(2682017CX066) and National Natural Science Foundation of China (grant no. 11701477).} 
\keywords{Free boundary; Well-posedness; elastodynamics; Inviscid flows; Incompressible fluids.}
\subjclass[2010]{35L65, 35Q35, 76B03, 76W05}
\begin{document}


\begin{abstract}
We consider the free boundary problem for the incompressible elastodynamics equations. At the free boundary moving with the velocity of the fluid particles the columns of the deformation gradient are tangent to the boundary and the pressure vanishes outside the flow domain. We prove
the local existence of a unique smooth solution of the free boundary problem,
under the mixed type stability condition that some regions of the initial free boundary satisfy the Rayleigh-Taylor sign condition,
while the remaining boundary satisfy
the non-collinearity condition. In particular, we  solve an  open problem proposed by Y. Trakhinin in the paper \cite{Trak_18}.
\end{abstract}

\maketitle

\section{Introduction}

\subsection{Eulerian formulation}

In this paper we consider the local solutions to the free boundary problem for the incompressible elastodynamics equations:
\begin{equation}
  \label{mhd}
\begin{cases}
\dt u  + u \cdot \nabla u + \nabla  p = \nabla\cdot(GG^T)  &\text{in } \Omega (t),\\
\diverge u =0 &\text{in } \Omega (t),\\
\dt G + u \cdot \nabla G= \nabla u G&\text{in } \Omega (t),\\
\diverge (G^T) =0 &\text{in } \Omega (t).
\end{cases}
\end{equation}
In the equations \eqref{mhd}, $u$ is the velocity field, $G=G_{ij}$ is the deformation tensor, $G^T=G_{ji}$ denotes the transpose of the matrix $G$, and $p$ is the pressure function of the fluid which occupies the moving bounded domain $\Omega(t)$. Here
$$\diverge u=\partial_iu_i, \quad (\diverge G^T)_i=\partial_jG_{ji},\quad (\nabla u)_{ij}=\partial_j u_i, \quad
(\nabla uG)_{ij}=\partial_k u_iG_{kj}.$$

We require the following boundary conditions on the free surface $\Gamma(t):=\pa \Omega (t)$:
\begin{equation} \label{mhd1}
V(\Gamma(t)) = u\cdot n \quad\text{on } \Gamma(t),
\end{equation}
and
\begin{equation} \label{mhd2}
  p=0 ,\quad
G^T\cdot n=0 \quad\text{on }\Gamma(t).
\end{equation}
The equation \eqref{mhd1} is called the kinematic boundary condition which states that the free surface $\Gamma(t)$ moves with the velocity of the fluid, where $V(\Gamma(t))$ denote the normal velocity of $\Gamma(t)$
and $n$ denotes the outward unit normal of $\Gamma(t)$. Finally, we impose the initial condition
\begin{equation} \label{mhd3}
(u, G)=(u_0,G_0) \text{ on }\Omega(0),\text{ and }\Omega(0)=\Omega.
\end{equation}

\subsection{Lagrangian reformulation}\label{lagrangian}

We transform the Eulerian problem \eqref{mhd}--\eqref{mhd3} on the moving domain $\Omega(t)$ to be one on the fixed domain $\Omega$ by the use of Lagrangian coordinates. Let $\eta(x,t)\in\Omega(t)$ denote the ``position" of the fluid particle $x$ at time $t$, i.e.,
\begin{equation}
\begin{cases}
\partial_t\eta(x,t)=u(\eta(x,t),t),\quad t>0,
\\ \eta(x,0)=x.
\end{cases}
\end{equation}
We assume that $\eta(\cdot,t)$ is invertible and define the Lagrangian unknowns on $\Omega$:
\begin{equation}
v(x,t)=u(\eta(x,t),t),\ F(x,t)=G(\eta(x,t),t), \ {\mathcal F}(x,t)=\frac{\partial \eta(x, t)}{\partial x} \text{ and } q(x,t)=p(\eta(x,t),t).
\end{equation}
Then in Lagrangian coordinates, the problem \eqref{mhd}--\eqref{mhd3} becomes the following:
\begin{equation}\label{eq:mhdo}
\begin{cases}
\partial_t\eta =v &\text{in } \Omega,\\
\partial_tv  +\naba q =\naba\cdot(FF^T)  &\text{in } \Omega,\\
 \diva v = 0 &\text{in  }\Omega,\\
 \dt F=\naba v F&\text{in } \Omega,\\
\diva F^T =0 &\text{in  }\Omega,\\
q=0&\text{on }\Gamma:=\partial\Omega,
\\ F^T \cdot n  =0 &\text{on }\Gamma,\\
 (\eta,v, b)\mid_{t=0} =(\text{Id}, v_0, b_0).
 \end{cases}
\end{equation}
Here the matrix $\mathcal A=\mathcal A(\eta):={\mathcal F}^{-T}=(\nabla\eta)^{-T}$, the differential operators $\naba:=(\partial_1^{\a}, \partial_2^{\a}, \partial_3^{\a})$ with $\partial_i^{\a}=\a_{ij}\partial_j$ and $\diva g=\a_{ij}\partial_jg_i$. $n=\mathcal{A}N/\abs{\a N}$, where $N$ is the outward unit normal of $\Gamma$. Note that the kinematic boundary condition \eqref{mhd1} is automatically satisfied by the first equation in \eqref{eq:mhdo}.
We shall find out the conserved quantities for the system \eqref{eq:mhdo}. These quantities will help us reformulate the system in a proper way, and the reformulation will be more suitable for our construction of solutions. To begin with, we denote $J = {\rm det}(\nabla\eta)$, the Jacobian of the coordinate
transformation. Then we have $\dt J=0$, which implies $J=1$. Next, by the fourth equation of \eqref{eq:mhdo},
we obtain
\begin{equation}
\begin{split}
\partial_t({\mathcal F}^{-1}(t, x)F(x, t))&=\partial_t{\mathcal F}^{-1}F+{\mathcal F}^{-1}\partial_t F\\
&=-{\mathcal F}^{-1}\partial_t{\mathcal F}{\mathcal F}^{-1}F+{\mathcal F}^{-1}\partial_t F\\
&=-{\mathcal F}^{-1}\naba v {\mathcal F}{\mathcal F}^{-1}F+{\mathcal F}^{-1}\naba vF\\
&=-{\mathcal F}^{-1}\naba v F+{\mathcal F}^{-1}\naba v F\\
&=0,
\end{split}
\end{equation}
hence, we have
\begin{equation}\label{Fformula}
F={\mathcal F}G_0.
\end{equation}
It can be directly verified that $\diva F^T =0$ in $\Omega$ and $F^T \cdot n  =0 $ on $\Gamma$ if  $\Div G_0^T=0$ in $\Omega$ and $G_0^T\cdot N=0$ on $\Gamma$. That is, we must regard these two conditions as the restriction on the initial data.

Consequently, \eqref{Fformula} motivates us to eliminate $F$ from the system \eqref{eq:mhdo}. Indeed, we have:
\begin{equation}
\begin{split}
\Big(\nabla_\a\cdot (FF^T)\Big)_i&=\a_{j\ell}\partial_\ell ({\mathcal F}G_0)_{ik}({\mathcal F}G_0)_{jk}\\
&=\a_{j\ell}\partial_\ell ({\mathcal F}_{im}G_{0m k}){\mathcal F}_{jn}G_{0n k}\\
&=\delta_{n\ell}\partial_\ell ({\mathcal F}_{im}G_{0m k})G_{0n k}\\
&=\partial_\ell ({\mathcal F}_{im}G_{0m k})G_{0\ell k}\\
&=\partial_\ell (\partial_m\eta^iG_{0m k}G_{0\ell k}).
\end{split}
\end{equation}
Then the system \eqref{eq:mhdo} can be reformulated equivalently as a free-surface incompressible Euler system with a forcing term induced by the flow map:
\begin{equation}
\label{eq:mhd}
\begin{cases}
\partial_t\eta =v &\text{in } \Omega,\\
\partial_tv^i +\a_{ij}\partial_j q =\partial_\ell (\partial_m\eta^iG_{0m k}G_{0\ell k})=(G_0^T\cdot\nabla)^2\eta&\text{in } \Omega,\\
 \a_{ij}\partial_j v^i = 0 &\text{in  }\Omega,\\
 q=0 & \text{on  }\Gamma,\\
 (\eta,v)\mid_{t=0} =(\text{Id}, v_0).
 \end{cases}
\end{equation}
In the system \eqref{eq:mhd}, the initial deformation tensor $G_0$ can be regarded as a parameter vector that satisfies
\begin{equation}
\label{bcond}
\diverge G_0^T=0 \text{ in }\Omega, \text{ and }G_0^T\cdot N=0\text{ on }\Gamma.
\end{equation}

\begin{remark}
It is possible to derive the a priori estimates for the original formulation \eqref{eq:mhdo}, however, as one will see, it is crucial for us to work with the equivalent reformulation \eqref{eq:mhd}, which enables us to construct approximate solutions that are asymptotically consistent with the a priori estimates.
\end{remark}

\subsection{Previous works}
 Free boundary problems in fluid mechanics have important physical background and have been studied intensively in the mathematical community. There are a huge amount of mathematical works, we will not attempt to address exhaustive reference in this paper. In the following, let us mention two closely related systems. One is free boundary problem for the incompressible Euler equations:
Without the deformation gradient $G$, problem \eqref{mhd} becomes the free boundary problem for the incompressible Euler equations, this problem was studied by many authors in recent decades, see, for example\cite{AD, CL_00, CS07, DS_10, GMS1, GMS2, IP,IP2, Lannes, Lindblad05, MasRou, N, SZ, Wang_15, Wu1, Wu2, Wu3, Wu4, ZZ}, and the references therein. In those paper, the local in time existence of smooth solutions was proved under the Rayleigh-Taylor sign condition,
\begin{equation}\label{RTC}
-\frac{\partial p}{\partial n}\geq \lambda>0 \text{ on }\Gamma.
\end{equation}

The other is the free boundary problems for the ideal MHD models. In this case,  the deformation gradient $G$
is replaced by the magnetic field $B$.
It attracts many research interests, but up to now only few well-posedness theory for the nonlinear problem could be found. For the general plasma-vacuum interface model, the well-posedness of the nonlinear compressible problem was recently proved in Secchi and Trakhinin \cite{Secchi_14} by
the Nash-Moser iteration based on the previous results on the linearized problem \cite{Secchi_13, Trak_10}. The well-posedness of the linearized incompressible problem was proved by Morando, Trakhinin and Trebeschi \cite{Mo_14}, the nonlinear problem was sloved by Sun, Wang and Zhang \cite{Sun_17} very recently. Recall that \cite{Mo_14, Secchi_13,Secchi_14,  Sun_17, Trak_10} require that the magnetic fields on either side of the interface are not collinear. The non-collinearity condition appears as the requirement that the symbol associated to the interface is elliptic; this yields a gain of $1/2$ derivative regularity of the interface, which is crucial for closing the energy estimates. Under the assumptions of the Taylor sign condition of the total pressure and that the strength of the magnetic field is constant on the
free surface, Hao and Luo \cite{Hao_13} proved the a priori estimates for the MHD problem by adopting a geometrical point of view \cite{CL_00}, but the solution was not constructed. 
Recently, Gu and Wang \cite{GuW_2016} constructed local solutions to the free boundary problems for the ideal MHD under the Taylor sign condition and for axially MHD equations, Gu \cite{Gu_2017} proved local-wellposedness under a more "general" stability condition, which provided that the Rayleigh-Taylor sign condition is satisfied at all those points of the initial interface where the non-collinearity condition fails.
We also mention some works about the current-vortex sheet problem, which describes a velocity and magnet field discontinuity in two ideal MHD flows. The nonlinear stability of compressible current-vortex sheets was solved independently by Chen and Wang \cite{Chen_08} and Trakinin \cite{Trak_09} by using the Nash-Moser iteration. For incompressible current-vortex sheets, Coulombel, Morando, Secchi and Trebeschi \cite{CMST} proved an a priori estimate for the nonlinear problem under a strong stability condition, and Sun, Wang and Zhang \cite{Sun_15} solved the nonlinear stability very recently. 

While for the case of incompressible elastic fluid with constant density it was studied by Hao and Wang in \cite{Hao_16} where a priori estimates in Sobolev norms of solutions were derived through a geometrical
point of view of \cite{CL_00} under the fulfilment of the Rayleigh-Taylor sign
condition \eqref{RTC}. Very recently, Li, Wang and Zhang \cite{LWZ_2018} estabilished the local well-posedness for both two free boundary
problems in incompressible elastodynamics under a natural stability condition by using the
method developed in \cite{Sun_15}. For elastodynamics equations, the elasticity plays a stabilization role, see, for instance
\cite{Chen_17, Trak_18}.
In \cite{Trak_18}, Trakhinin
showed that the Rayleigh Taylor sign condition \eqref{RTC} is not necessary for well-posedness.
If the Rayleigh Taylor sign condition \eqref{RTC} fails, Trakhinin proved the local in time existence
under the non-collinearity condition \eqref{Nclinear}.
And in \cite{Trak_18}, Trakhinin proposed an open problem:
For the most general "stability" assumption for the initial data, if we just only require the fulfilment of the
Rayleigh-Taylor sign condition at all those point of $\Gamma$ where the non-collinearity condition fails,
is there local in time solution? In this paper, we will show that this is indeed the case. 

This paper is organized as follows: The main result is introduced in Section 2, and in Section 3 we will introduce some notations and recall some useful lemmas which are necessary for the proof of our theorem. Then in Section 4 we will introduce the approximation system of \eqref{eq:mhd} and then derive the uniform estimates for the approximate solutions. and the proof of Theorem \ref{mainthm} is presented in Section 5.

\section{Main results}

To avoid the use of local coordinate charts necessary for arbitrary geometries, for simplicity, we assume that the initial domain is given by
\begin{equation}
\label{domain}
\Omega=\mathbb{T}^2\times(0,1),
\end{equation}
where $\mathbb{T}^2$ denotes the 2-torus. This permits the use of one global Cartesian coordinate system. The boundary of $\Omega$ is then given by the horizontally flat bottom and top:
\begin{equation}
\Gamma =\mathbb{T}^2 \times (\{0\}\cup \{1\}).
\end{equation}
We denote $N$ by the outward unit normal vector of $\Gamma$:
\begin{equation}
N=e_3 \text{ when }x_3=1, \text{ and }N=-e_3\text{ when }x_3=0.
\end{equation}

Before stating our results of this paper, we may refer the readers to our notations and conveniences in Section \ref{nota sec}. 

Given the data $(v_0, G_0)\in H^4(\Omega)$ with $\Div v_0=\Div G_0^T=0$, as $G_0^T\cdot N=0$ on $\Gamma$, hence $G_{03i}=0, i=1, 2, 3$. We define the initial pressure function $q_0$ as the solution to the elliptic problem
\begin{equation}
\begin{cases}
-\Delta q_0=\partial_j v_{0i}\partial_i {v_0}_j-\partial_j G_{0ik}\partial_i G_{0jk}&\text{in }\Omega,\\
q_0=0&\text{on }\Gamma.
\end{cases}
\end{equation}
Denote $\Gamma=\Gamma_1\cup \Gamma_2$, and we assume initially
\begin{equation}\label{taylor}
-\nabla q_0\cdot N\geq \lambda>0 \text{ on }\Gamma_1,
\end{equation}
\begin{equation}\label{taylor1}
\abs{G_{01}\times G_{02}}\geq\delta>0~~\text{on}~~\Gamma_2,
\end{equation}
here $G_{01}=(G_{011}, G_{012}, G_{013}), G_{02}=(G_{021}, G_{022}, G_{023})$.

We define the higher order energy functional
\begin{equation}
\label{edef}
\mathfrak{E}(t)=\norm{v}_4^2+\norm{\eta}_4^2+\norm{G_0^T\cdot\nabla \eta }_4^2+\abs{\bar\partial^4\eta\cdot n }^2_{L^2(\Gamma_1)}.
\end{equation}
Here $\Big(G_0^T\cdot\nabla \eta\Big)_{ij}=G_{0ki}\partial_k\eta_j.$

Then the main result in this paper is stated as follows.
\begin{theorem}\label{mainthm}
Suppose that the initial data $ v_0 \in H^4(\Omega)$ with $\Div v_0=0$ and $G_0 \in H^4(\Omega)$ satisfies \eqref{bcond} and that the boundary condition \eqref{taylor}, \eqref{taylor1} holds initially. Then there exists a $T_0>0$ and a unique solution $(v, q, \eta)$ to  \eqref{eq:mhd} on the time interval $[0, T_0]$ which satisfies
\begin{equation}\label{enesti}
\sup_{t \in [0,T_0]} \mathfrak E(t) \leq P\left(\norm{v_0}_4^2+\norm{G_0}_4^2\right),
\end{equation}
where $P$ is a generic polynomial.

\begin{remark}
Our approach in this paper is inspired by the recent work of \cite{GuW_2016}, where the authors constructed local solutions to the free boundary problems for the ideal MHD under the Taylor sign condition. Notice that,
when we transform the Eulerian problem \eqref{mhd}--\eqref{mhd3} on the moving domain $\Omega(t)$ to be one on the fixed domain $\Omega$ by the use of Lagrangian coordinates, the final system \eqref{eq:mhd} is similar with the system studied in the paper
\cite{GuW_2016}: in \cite{GuW_2016} the forcing term is $(b_0\cdot\nabla)^2\eta$, $b_0$ is a vector, while in our case,  the forcing term is $(G_0^T\cdot\nabla)^2\eta$, $G_0$ is a matrix. Hence, we can obtained the similar estimates.
Our main innovation in this paper is the treatment of the non-collinearity case, we notice that if the initial data $G_0$ satisfy the
so-called non-collinearity condition, then we have estimate \eqref{G_0G}, which yields a gain of one derivative regularity, and help us to deal with the non-collinearity case---an  open problem proposed by Y. Trakhinin in the paper \cite{Trak_18}. 
\end{remark}

\begin{remark}
As stated in \cite[Remark 1.3]{LWZ_2018}, the authors claim they can also deal with the mixed type stability condition using the method developed
in \cite{LWZ_2018}. However, our approach is quite different from their framework.
\end{remark}

\end{theorem}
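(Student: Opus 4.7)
The overall strategy is the standard three-step program: (i) design an approximation scheme for \eqref{eq:mhd} that preserves the key structural cancellations, (ii) prove energy estimates uniform in the regularization parameter, and (iii) pass to the limit and establish uniqueness. Because the remark indicates that the forcing $(G_0^T\!\cdot\!\nabla)^2\eta$ plays the same algebraic role here as $(b_0\!\cdot\!\nabla)^2\eta$ in the MHD work \cite{GuW_2016}, I would borrow the tangential-smoothing/Galerkin-in-$\kappa$ scheme of that paper, adapted so that the approximate solutions still obey $\diverge G_0^T=0$ and $G_0^T\!\cdot\!N=0$ after the regularization. The nontrivial part is to close the a priori bound for $\mathfrak{E}(t)$ in \eqref{edef} on some time interval that depends only on $\|v_0\|_4^2+\|G_0\|_4^2$.

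For the a priori estimate I would apply $\bar\partial^4$ (tangential derivatives of total order four) to the momentum equation, pair with $\bar\partial^4 v$, and integrate over $\Omega$. Two favourable cancellations appear. First, writing $\bar\partial^4(\a_{ij}\partial_j q)=\a_{ij}\partial_j\bar\partial^4 q+[\bar\partial^4,\a_{ij}\partial_j]q$, integrating by parts, and using $\diva v=0$ produces, after careful handling of the Alinhac good unknown, the usual boundary contribution on $\Gamma$ of the form $\int_{\Gamma}(-\partial_N q)\,|\bar\partial^4\eta\!\cdot\!n|^2\,dS$ up to controllable commutators. Second, the forcing term rewrites, using $G_0^T\!\cdot\!N=0$ and $\diverge G_0^T=0$, as
\begin{equation}
\int_\Omega (G_0^T\!\cdot\!\nabla)^2\bar\partial^4\eta\cdot\bar\partial^4 v\,dx
=-\tfrac12\tfrac{d}{dt}\|G_0^T\!\cdot\!\nabla\bar\partial^4\eta\|_{L^2}^2+(\text{commutators}),
\end{equation}
so the energy functional naturally absorbs the term $\|G_0^T\!\cdot\!\nabla\eta\|_4^2$. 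Combined with standard elliptic estimates for $q$ (from the elliptic problem obtained by taking $\diva$ of the momentum equation, with Dirichlet data $q=0$ on $\Gamma$), this gives the interior part of $\mathfrak{E}$.

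The decisive step is to control the boundary piece $|\bar\partial^4\eta\!\cdot\!n|^2_{L^2(\Gamma_1)}$, and to upgrade regularity on $\Gamma_2$ where no boundary term is produced by $(-\partial_N q)$. On $\Gamma_1$, the Rayleigh--Taylor condition \eqref{taylor} makes $-\partial_N q_0\geq\lambda>0$, and a continuity argument keeps $-\partial_N q\geq\lambda/2$ on a short time interval, so the pressure boundary integral becomes coercive and directly yields $|\bar\partial^4\eta\!\cdot\!n|^2_{L^2(\Gamma_1)}$. On $\Gamma_2$, I would use the non-collinearity \eqref{taylor1}: since $G_0^T\!\cdot\!N=0$ forces both rows $G_{01},G_{02}$ to be tangent to $\Gamma_2$, the condition $|G_{01}\times G_{02}|\geq\delta>0$ means these two tangential vector fields span the tangent plane uniformly. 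Thus any tangential derivative can be written as a bounded linear combination of $G_{01}\!\cdot\!\nabla$ and $G_{02}\!\cdot\!\nabla$, and the trace of $\|G_0^T\!\cdot\!\nabla\eta\|_4$ on $\Gamma_2$ controls $\bar\partial^4\eta$ restricted to $\Gamma_2$ with a half-derivative gain. This is the crucial estimate that I anticipate calling \eqref{G_0G}.

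\textbf{Main obstacle.} The hard part will be organizing these two mechanisms into a single coherent energy identity: the RT term gives coercivity only on $\Gamma_1$, while the non-collinearity term gives coercivity only on $\Gamma_2$, and the commutators generated by $\bar\partial^4$ acting on variable coefficients (notably $\mathcal{A}$, $\a_{ij}$, and $G_0$) must be absorbed into $\mathfrak{E}$ without touching the top-order traces on the wrong boundary component. I would use a smooth partition of unity subordinate to $\Gamma_1,\Gamma_2$ to localize the boundary integrals, estimate the worst commutators by the Kato--Ponce/trace-Sobolev inequalities combined with the $H^4$ interior control, and verify that the resulting Grönwall-type differential inequality $\tfrac{d}{dt}\mathfrak{E}\leq P(\mathfrak{E})$ closes. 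Once the a priori bound is closed at the approximate level, existence follows by compactness and uniqueness follows from a standard energy estimate on the difference of two solutions, giving Theorem~\ref{mainthm}.
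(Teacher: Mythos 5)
Your proposal is essentially the paper's approach: Lagrangian reformulation, a $\kappa$-regularized scheme borrowed from \cite{GuW_2016}, Alinhac good unknowns to cancel the top-order pressure terms, the energy absorbing $\|G_0^T\cdot\nabla\eta\|_4^2$ from the forcing, Rayleigh--Taylor coercivity on $\Gamma_1$, and---crucially---the observation that on $\Gamma_2$ the non-collinearity of the tangential vector fields $G_{01},G_{02}$ lets you invert the system $G_{0\alpha}\cdot\nabla\eta=f_\alpha$ and deduce $|\bar\partial^4\eta|_{H^{1/2}(\Gamma_2)}\lesssim\|G_0^T\cdot\nabla\eta\|_{H^4(\Omega)}$, which is precisely the paper's Lemma~\ref{IG_0} and the half-derivative gain exploited via the $(H^{-1/2},H^{1/2})$ duality. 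The paper simply splits the boundary integral $\int_\Gamma=\int_{\Gamma_1}+\int_{\Gamma_2}$ directly rather than inserting a smooth partition of unity, but that is cosmetic; the substantive pieces you would need to fill in (the exact form of the boundary smoother $\eta^\kappa$ and the corrector $\psi^\kappa$ that kills the $\kappa$-residual boundary terms, the curl/divergence/Hodge bookkeeping that recovers full $H^4$ from tangential control, and the pressure elliptic estimate) are all imported from the Gu--Wang MHD scheme exactly as you indicate.
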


\subsection{Strategy of the proof}
The strategy of proving the local well-posedness for the inviscid free boundary problems consists of three main parts: the a priori estimates in certain energy functional spaces, a suitable approximate problem which is asymptotically consistent with the a priori estimates, and the construction of solutions to the approximate problem.  

In the usual derivation of the a priori tangential energy estimates of \eqref{eq:mhd} in the $H^4$ setting, one deduces
\begin{equation}
\begin{split}
&\hal\dfrac{d}{dt} \int_{\Omega}\abs{\bar\partial^4 v}^2+\abs{\bar\partial^4 ( G_0^T\cdot\nabla\eta)}^2+\int_{\Gamma}(-\nabla q\cdot N)\bar\partial^4{\eta}_j{\mathcal A}_{j3} q\bar\partial^4 v_i{\mathcal A}_{i3}
\\&\quad \approx \underbrace{\int_\Omega \bar\partial^4 \nabla\eta \bar\partial^4  \eta+ \bar\partial^4 \nabla\eta\bar\partial^4  q}_{\mathcal{R}_Q}+l.o.t.,
\end{split}
\end{equation}
where $l.o.t.$  denotes integrals consisting of lower-order terms. In this paper, we consider the mixed type stability condition on the boundary, we split the boundary into two parts $\Gamma=\Gamma_1\cup \Gamma_2$. Rayleigh-Taylor sign condition \eqref{taylor} is satisfied on $\Gamma_1$, and on $\Gamma_2$, non-collinearity condition \eqref{taylor1} is satisfied.
As a consequence, if the boundary satisfy the Rayleigh-Taylor sign condition, since $\dt \eta=v$, one has
\begin{equation}\label{taylores}
\begin{split}
\int_{\Gamma_1}(-\nabla q\cdot N)\bar\partial^4{\eta}_j{\mathcal A}_{j3}\bar\partial^4 v_i{\mathcal A}_{i3}&=\dfrac{1}{2}\dfrac{d}{dt}\int_{\Gamma_1}(-\nabla q\cdot N)\abs{\bar\partial^4 \eta_i{\mathcal{A}}_{i3}}^2
\\&\quad+\underbrace{\int_{\Gamma_1}  (-\nabla q\cdot N) \a_{j3}\bp^4   \eta_j   \a_{im}\pa_m   v_\ell \a_{\ell3} \bar\partial^4  \eta_i}_{\widetilde{\mathcal{R}}_Q}+l.o.t..
\end{split}
\end{equation}
Under the Rayleigh-Taylor condition, one gets that $\bar\partial^4 \eta\cdot n\in L^2(\Gamma_1)$ which is $1/2$ higher regular than $v$. For the incompressible Euler equations, through a careful study of the vorticity equation, it can be shown that $\curl\eta\in H^{4-1/2}(\Omega)$ which leads to $\eta\in H^{4+1/2}(\Omega)$ (and hence $q\in H^{4+1/2}(\Omega)$), and thus $\mathcal{R}_Q$ and $\widetilde{\mathcal{R}}_Q$ can be controlled, which in turn closes the a priori estimates in the energy functional of $\norm{v}_{4}^2+\norm{\eta}_{4+1/2}^2$, see \cite{CS07,DS_10}. However, for the incompressible elastodynamics  equations, we cannot prove that $\curl\eta\in H^{4-1/2}(\Omega)$ due to the presence of the force term. Indeed, in the study of the vorticity equation,  if one wishes to show $\curl\eta\in H^{4-1/2}(\Omega)$, then one needs to estimate $\norm{G_0^T\cdot\nabla\eta}_{4+1/2}^2$ which is out of control since one does not have $\bar\partial^4 (G_0^T\cdot\nabla\eta)\cdot n\in L^2(\Gamma_1)$ unless $G_{0}=0$ on $\Gamma_1$. As a consequence, we can only hope to close the a priori estimates in the energy functional $\mathfrak{E}(t)$ defined by \eqref{edef}. However, this yields a loss of derivatives in estimating $\mathcal{R}_Q$ and $\widetilde{\mathcal{R}}_Q$. Our idea to overcome this difficulty is, motivated by \cite{MasRou,Wang_15}, to use Alinac's good unknowns $
\mathcal{V} =\bar\partial^4 v- \bp^4\eta \cdot\nabla_\a v$ and $ \mathcal{Q} =\bar\partial^4 q- \bp^4\eta \cdot\nabla_\a q$, which derives a crucial cancellation observed by Alinhac \cite{Alinhac}, i.e., when considering the equations for $\mathcal{V} $ and $Q $, the terms $\mathcal{R}_Q$ and $\widetilde{\mathcal{R}}_Q$ disappear. On the boundary $\Gamma_2$, in this case, as $G_0^TN=0$ on $\Gamma$, hence we have $G_{03l}=0, l=1, 2, 3$. By trace theorem and the boundary satisfy the non-collinearity condition, we have $\abs{\bar\partial^4\eta}_{\frac{1}{2}}\lesssim\|G_0^T\cdot\nabla\eta\|_{H^4(\Omega)}$, which yields a gain of one derivative regularity, hence in this case, there is no derivative lose and we can use $(H^{-1/2}, H^{1/2})$ dual estimate. Combining all those estimates, the tangential energy estimates can be finished. Doing the divergence and curl estimates is somehow standard and combining with the tangential energy estimates, this allows us to close the a priori estimates of \eqref{eq:mhd} in $\mathfrak{E}(t)$.

After we obtaining the a priori estimates, we construct approximate system to \eqref{eq:mhd}, which is asymptotically consistent with the a priori estimates for the original system. This is highly nontrivial. Recalling that, under the Rayleigh-Taylor sign condition, the a priori estimates relies heavily on the geometric transport-type structure of the nonlinear problem, which will lost during the linearization approximation. Hence, we apply the nonlinear $\kappa$-approximation developed in \cite{GuW_2016} and we can derive $\kappa$-independent a priori estimates. What now remains in the proof of the local well-posedness of \eqref{eq:mhd} is to constructing solutions to the nonlinear $\kappa$-approximate problem \eqref{approximate}. This solvability can be obtained by the viscosity vanishing method used in \cite[Section 5.1]{GuW_2016}. Consequently, the construction of solutions to the incompressible elastodynamics equations \eqref{eq:mhd}  is completed.

\section{Preliminary}

\subsection{Notation}\label{nota sec}

Einstein's summation convention is used throughout the paper, and repeated
Latin indices $i,j,$ etc., are summed from 1 to 3, and repeated Greek indices
$\alpha,\beta$, etc., are summed from 1 to 2. We will use the Levi-Civita symbol
\begin{equation}\nonumber
\epsilon_{ij\ell}=\left\{\begin{aligned}
1 &,\,\,\text{even permutation of}\,\, \{1,2,3\},\\
-1 &,\,\,\text{odd permutation of}\,\, \{1,2,3\},\\
0 &,\,\,\text{otherwise.} \\
\end{aligned}\right.
\end{equation}

\subsection{Sobolev spaces on $\Omega$}

For integers $k\ge 0$ and a smooth, open domain $\Omega$ of $\mathbb{R}  ^3$, 
we define the Sobolev space $H^k(\Omega)$ ($H^k(\Omega; {\mathbb R}^3 )$) to
be the completion of $C^\infty(\Omega)$ ($C^\infty(\Omega; {\mathbb R}^3)$) 
in the norm
$$\|u\|_k := \left( \sum_{|a|\le k}\int_\Omega \left|   D^ a u(x)
\right|^2 dx\right)^{1/2},$$
for a multi-index $a \in {\mathbb Z} ^3_+$, with the standard convention that  $|a|=a_1 +a_2+ a _3$. 
For real numbers $s\ge 0$, the Sobolev spaces $H^s(\Omega)$ and the norms $\| \cdot \|_s$ are defined by interpolation. We will  write $H^s(\Omega)$ instead of $H^s(\Omega;{\mathbb R} ^3)$ 
for vector-valued functions.  And the Sobolev spaces $W^{m,p}$ can be defined similarly.
For functions $u\in H^k(\Gamma), k\geq 0$, we set
\begin{equation}
\abs{u}_k:=\Big(\sum_{|\alpha|\leq k}\int_\Gamma|\bar\partial^\alpha u(x)|^2dx\Big)^{1/2},
\end{equation}
for a multi-index $\alpha\in \mathbb{Z}_+^2.$ For real $s\geq 0$, the Hilbert space $H^s(\Gamma)$ and the boundary norm $\abs{\cdot}_s$ (or $\abs{\cdot}_{H^s(\Gamma)}$) is defined by interpolation. The negative-order Sobolev space $H^{-s}(\Gamma)$ are defined via duality: for real $s\geq 0, H^{-s}(\Gamma):=[H^{-s}(\Gamma)]^\prime.$
 
We will on occasion also refer to  the Banach space $W^{1, \infty }(\Omega)$ consisting of $L^ \infty(\Omega)$ functions whose
 weak derivatives are also in $L^ \infty (\Omega)$.

We use $D$ to denote the spatial derives, $\bar{\partial}$ to denote the tangential derivatives, $\Delta$ to denote the Laplacian on $\Omega$ and $\Delta_{\ast}$ to denote the Laplacian on $\Gamma$. We also use $\int_{\Omega} f$, $\int_{\Gamma} f$ as the integrals abbreviation of $\int_{\Omega}f\,dx$, $\int_{\Gamma}f\,dx_{\ast}$, with $x_\ast=(x_1,x_2)$.

We use $C$ to denote generic constants, which only depends on the domain $\Omega$ and the boundary $\Gamma$, and  use $f\ls g$ to denote $f\leq Cg$.  We use $P$ to denote a generic polynomial function of its arguments, and the polynomial coefficients are generic constants $C$.

\subsection{Product and commutator estimates}

We recall the following product and commutator estimates.
\begin{lemma}
It holds that

\noindent $(i)$ For $|\alpha|=k\geq 0$,
\begin{equation}
\label{co0}
\norm{D^{\alpha}(gh)}_0 \ls \norm{g}_{k}\norm{h}_{[\frac{k}{2}]+2}+\norm{g}_{[\frac{k}{2}]+2}\norm{h}_{k}.
\end{equation}
$(ii)$ For $|\alpha|=k\geq 1$, we define the commutator
\begin{equation}
[D^{\alpha}, g]h = D^{\alpha}(gh)-gD^{\alpha} h.
\end{equation}
Then we have
\begin{align}
\label{co1}
&\norm{[D^{\alpha}, g]h}_0\ls\norm{Dg}_{k-1}\norm{h}_{[\frac{k-1}{2}]+2}+\norm{Dg}_{[\frac{k-1}{2}]+2}\norm{h}_{k-1} ,\\
\label{co3}
&\abs{[D^{\alpha}, g]h}_0\ls\abs{Dg}_{k-1}\abs{h}_{[\frac{k-1}{2}]+\frac{3}{2}}+\abs{Dg}_{[\frac{k-1}{2}]+\frac{3}{2}}\abs{h}_{k-1} .
\end{align}
$(iii)$ For $|\alpha|=k\geq 2$, we define the symmetric commutator
\begin{equation}
\left[D^{\alpha}, g, h\right] = D^{\alpha}(gh)-D^{\alpha}g h-gD^{\alpha} h.
\end{equation}
Then we have
\begin{equation}
\label{co2}
\norm{\left[D^{\alpha}, g, h\right]}_0\ls\norm{Dg}_{k-2}\norm{Dh}_{[\frac{k-2}{2}]+2}+ \norm{Dg}_{[\frac{k-2}{2}]+2}\norm{Dh}_{k-2} .
\end{equation}
\end{lemma}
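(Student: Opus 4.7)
The plan is to reduce parts (ii) and (iii) to part (i), and to prove (i) by a Leibniz expansion combined with H\"older's inequality and a Sobolev embedding; these are standard Moser-type estimates, and the only subtle point is the dimension-dependent threshold for the $L^\infty$ embedding.

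For part (i), I would expand by the Leibniz rule
\[
D^\alpha(gh) = \sum_{\beta \leq \alpha} \binom{\alpha}{\beta}\, D^\beta g \cdot D^{\alpha-\beta}h,
\]
and bound each summand in $L^2(\Omega)$ by H\"older's inequality. When $|\beta|\leq [k/2]$ I place $D^\beta g$ into $L^\infty$ and $D^{\alpha-\beta}h$ into $L^2$; in the complementary range the roles are reversed. The three-dimensional embedding $H^2(\Omega)\hookrightarrow L^\infty(\Omega)$ then gives $\|D^\beta g\|_{L^\infty}\ls \|g\|_{[k/2]+2}$ and $\|D^{\alpha-\beta}h\|_{L^2}\leq \|h\|_k$ in the first range, together with the symmetric bounds in the second; summing over $\beta$ yields \eqref{co0}.

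For part (ii), the commutator restricts the Leibniz sum to $|\beta|\geq 1$, so at least one derivative is always spent on $g$; factoring $D^\beta g = D^{\beta - e_i}(Dg)$ recasts the remainder as a Leibniz expansion of total order $k-1$ for the product $(Dg)\cdot h$, and \eqref{co0} with $k$ replaced by $k-1$ and $g$ replaced by $Dg$ then gives \eqref{co1}. The boundary estimate \eqref{co3} follows by exactly the same reduction, except that one invokes the two-dimensional Sobolev embedding $H^{3/2}(\Gamma)\hookrightarrow L^\infty(\Gamma)$ in place of $H^2\hookrightarrow L^\infty$; this is precisely what replaces the interior threshold $[(k-1)/2]+2$ by the boundary threshold $[(k-1)/2]+3/2$.

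For part (iii), the symmetric commutator kills both the $|\beta|=0$ and $|\beta|=k$ summands, so every surviving term has at least one derivative on each factor. Factoring $D^\beta g = D^{\beta-e_i}(Dg)$ and $D^{\alpha-\beta}h = D^{\alpha-\beta-e_j}(Dh)$ recasts the sum as a Leibniz expansion of order $k-2$ for $(Dg)\cdot(Dh)$, and \eqref{co0} with $k$ replaced by $k-2$, $g$ by $Dg$, $h$ by $Dh$ delivers \eqref{co2}. The only bookkeeping care needed throughout is aligning the shifted half-integer floor functions with the correct Sobolev-embedding threshold ($+2$ in the interior, $+3/2$ on the boundary), but this is routine, and I do not anticipate any real obstacle.
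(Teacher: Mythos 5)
Your argument is correct: the paper offers no proof of its own for this lemma (it simply cites \cite{GuW_2016}), and your Leibniz expansion combined with H\"older and the embeddings $H^{2}(\Omega)\hookrightarrow L^{\infty}(\Omega)$ (interior, $n=3$) and $H^{3/2}(\Gamma)\hookrightarrow L^{\infty}(\Gamma)$ (boundary, $n=2$) is exactly the standard route to these Moser-type estimates, with the reductions of (ii) and (iii) to (i) handled correctly by peeling one derivative off each factor. The index bookkeeping ($k-|\beta|+2\le[\frac k2]+2$ when $|\beta|\ge[\frac k2]+1$, and its shifted analogues) checks out in both parities of $k$, so there is nothing to add.
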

\begin{proof}
See, for instance \cite{GuW_2016}.
\end{proof}
We will also use the following lemma.
\begin{lemma}
It holds that
\begin{equation}
\label{co123}
\abs{gh}_{1/2} \ls \abs{g}_{W^{1,\infty}}\abs{h}_{1/2}.
\end{equation}
 \end{lemma}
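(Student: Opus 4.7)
\medskip
\noindent\textbf{Proof proposal for \eqref{co123}.}

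The plan is to work with the Gagliardo--Slobodeckij characterization of the fractional norm on the $2$-dimensional surface $\Gamma$:
\begin{equation*}
\abs{f}_{1/2}^2 \;\simeq\; \norm{f}_{L^2(\Gamma)}^2 \;+\; \iint_{\Gamma\times\Gamma}\frac{\abs{f(x)-f(y)}^2}{\abs{x-y}^{3}}\,dx\,dy,
\end{equation*}
since $\dim\Gamma=2$ and $n+2s=2+1=3$. With this in hand, the $L^2$ contribution to $\abs{gh}_{1/2}$ is immediate from $\norm{gh}_{L^2(\Gamma)}\le\norm{g}_{L^\infty}\norm{h}_{L^2}\le \abs{g}_{W^{1,\infty}}\abs{h}_{1/2}$, so only the double integral requires care.

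For the seminorm term I would use the standard split
\begin{equation*}
(gh)(x)-(gh)(y) \;=\; g(x)\bigl(h(x)-h(y)\bigr) \;+\; h(y)\bigl(g(x)-g(y)\bigr),
\end{equation*}
and estimate the two pieces separately. The first piece is straightforward:
\begin{equation*}
\iint_{\Gamma\times\Gamma}\frac{\abs{g(x)}^2\abs{h(x)-h(y)}^2}{\abs{x-y}^{3}}\,dx\,dy \;\le\; \norm{g}_{L^\infty}^2\,\abs{h}_{1/2}^2.
\end{equation*}
For the second piece I would exploit the Lipschitz bound $\abs{g(x)-g(y)}\le \norm{\nabla g}_{L^\infty}\abs{x-y}$, which reduces the singularity:
\begin{equation*}
\iint_{\Gamma\times\Gamma}\frac{\abs{h(y)}^2\abs{g(x)-g(y)}^2}{\abs{x-y}^{3}}\,dx\,dy \;\le\; \norm{\nabla g}_{L^\infty}^2\iint_{\Gamma\times\Gamma}\frac{\abs{h(y)}^2}{\abs{x-y}}\,dx\,dy.
\end{equation*}
Because $\Gamma$ is two-dimensional and bounded, the kernel $\abs{x-y}^{-1}$ is integrable in $x$ uniformly in $y$, so Fubini gives the bound $\norm{\nabla g}_{L^\infty}^2\norm{h}_{L^2}^2\le \abs{g}_{W^{1,\infty}}^2\abs{h}_{1/2}^2$. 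Summing the three contributions yields \eqref{co123}.

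The only real subtlety is the second piece: one must check that the gain of one power of $\abs{x-y}$ from the Lipschitz estimate on $g$ is enough to tame the Gagliardo kernel. This is exactly the dimensional coincidence $\dim\Gamma=2$, which is where the $W^{1,\infty}$ hypothesis on $g$ (rather than merely $L^\infty$) is used. An equivalent route, which I would mention but not pursue, is to interpolate the multiplication operator $M_g$ between $M_g:L^2\to L^2$ (norm $\le \norm{g}_{L^\infty}$) and $M_g:H^1\to H^1$ (norm $\lesssim \norm{g}_{W^{1,\infty}}$), which yields $\norm{M_g}_{H^{1/2}\to H^{1/2}}\lesssim \norm{g}_{L^\infty}^{1/2}\norm{g}_{W^{1,\infty}}^{1/2}\le \abs{g}_{W^{1,\infty}}$; the direct Gagliardo computation above, however, keeps the argument self-contained.
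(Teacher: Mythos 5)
Your proof is correct, but your primary argument is genuinely different from the paper's. The paper's proof is exactly the two-line interpolation argument you relegate to a closing aside: it checks $\abs{gh}_{s}\ls\abs{g}_{W^{1,\infty}}\abs{h}_{s}$ for $s=0,1$ and then interpolates the multiplication operator $M_g$ to get the $s=1/2$ case. Your main route instead computes directly with the Gagliardo--Slobodeckij seminorm on the two-dimensional boundary $\Gamma$, splitting $(gh)(x)-(gh)(y)=g(x)(h(x)-h(y))+h(y)(g(x)-g(y))$ and using the Lipschitz bound on $g$ to gain one power of $\abs{x-y}$ against the kernel $\abs{x-y}^{-3}$, leaving the locally integrable kernel $\abs{x-y}^{-1}$ on a bounded two-dimensional set. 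Both arguments are sound; your dimensional bookkeeping ($n+2s=3$, and integrability of $\abs{x-y}^{-1}$ on $\Gamma$) is right, and the $L^2$ and seminorm pieces are each controlled by $\abs{g}_{W^{1,\infty}}\abs{h}_{1/2}$ as claimed. What the paper's interpolation buys is brevity and independence from any particular realization of the fractional norm (the paper in fact \emph{defines} $H^{1/2}(\Gamma)$ by interpolation, so its proof is tautologically compatible with that definition); what your direct computation buys is a self-contained, quantitative argument that makes visible exactly where the $W^{1,\infty}$ hypothesis, rather than mere $L^\infty$, is needed. If you keep the Gagliardo route, you should add one sentence noting that the interpolation norm and the Gagliardo norm on $H^{1/2}(\Gamma)$ are equivalent, since that is the bridge back to the norm the paper actually uses.
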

\begin{proof}
It is direct to check that $\abs{gh}_{s} \ls \abs{g}_{W^{1,\infty}}\abs{h}_{s}$ for $s=0,1$. Then the estimate \eqref{co123} follows by the interpolation.
\end{proof}

\subsection{Hodge decomposition elliptic estimates}

Our derivation of high order energy estimates is based on the following Hodge-type elliptic estimates.
\begin{lemma}
\label{hodge}
Let $s\ge 1$, then it holds that
\begin{equation}
\norm{\omega}_s\ls \norm{\omega}_0+\norm{\curl \omega}_{s-1}+\norm{\Div \omega}_{s-1}+\abs{\bar{\partial}\omega \cdot N}_{s-3/2} .\label{hodd}
\end{equation}
\end{lemma}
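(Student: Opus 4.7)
The plan is to establish the inequality first for $s=1$ by a direct Hodge-type calculation, then promote it to higher integer $s$ by a normal-derivative-trading argument, and finally fill in non-integer $s$ by interpolation.

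For the base case $s=1$, I would start from the pointwise identity coming from $\epsilon_{kij}\epsilon_{kmn}=\delta_{im}\delta_{jn}-\delta_{in}\delta_{jm}$, which yields
\begin{equation*}
|\nabla\omega|^2=|\curl\omega|^2+(\Div\omega)^2+\partial_i(\omega_j\partial_j\omega_i)-\partial_j(\omega_j\Div\omega).
\end{equation*}
Integrating over $\Omega$ and using the divergence theorem gives
\begin{equation*}
\int_\Omega|\nabla\omega|^2=\int_\Omega\bigl(|\curl\omega|^2+(\Div\omega)^2\bigr)+\int_\Gamma\bigl(N_i\omega_j\partial_j\omega_i-N_j\omega_j\Div\omega\bigr).
\end{equation*}
Because $\Gamma=\mathbb{T}^2\times\{0,1\}$ has the constant outward normal $N=\pm e_3$, the boundary integrand simplifies to $\omega_\alpha\bar\partial_\alpha\omega_3-\omega_3\bar\partial_\alpha\omega_\alpha$. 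Integrating by parts along $\mathbb{T}^2$ (which is boundaryless) and using that $\omega\cdot N=\pm\omega_3$ with $N$ constant, this boundary term equals $2\int_\Gamma\omega_\alpha\bar\partial_\alpha(\omega\cdot N)$ (up to sign), which by $H^{1/2}$–$H^{-1/2}$ duality is bounded by $|\omega|_{1/2}\,|\bar\partial\omega\cdot N|_{-1/2}$. The trace inequality $|\omega|_{1/2}\ls\norm{\omega}_1$ and Young's inequality let me absorb a small fraction of $\norm{\omega}_1^2$ into the left side, producing the desired estimate with $s=1$.

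For an integer $s\ge 2$ I would iterate the key observation that normal derivatives can be exchanged for tangential ones modulo lower-order curl and divergence data: from the definition of curl one reads $\partial_3\omega_1=\bar\partial_1\omega_3+(\curl\omega)_2$ and $\partial_3\omega_2=\bar\partial_2\omega_3-(\curl\omega)_1$, while $\partial_3\omega_3=\Div\omega-\bar\partial_\alpha\omega_\alpha$. Hence any derivative $D^a\omega$ of order $|a|\le s$ can be rewritten as a sum of purely tangential derivatives of $\omega$ plus derivatives of order $\le s-1$ of $\curl\omega$ and $\Div\omega$. This reduces matters to estimating $\|\bar\partial^s\omega\|_0$. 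Applying the base-case estimate to $\bar\partial^{s-1}\omega$ and noting that $\curl$ and $\Div$ commute with the tangential derivatives $\bar\partial$ yields
\begin{equation*}
\|\bar\partial^{s-1}\omega\|_1\ls\|\bar\partial^{s-1}\omega\|_0+\|\bar\partial^{s-1}\curl\omega\|_0+\|\bar\partial^{s-1}\Div\omega\|_0+|\bar\partial^s\omega\cdot N|_{-1/2},
\end{equation*}
and the last factor is controlled by $|\bar\partial\omega\cdot N|_{s-3/2}$.

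The main obstacle is the boundary term in the base case: a naive trace estimate costs a full derivative and fails to close. The gain comes from the fact that $N$ is constant and $\Gamma$ is boundaryless, so an integration by parts in the tangential direction converts the boundary term into the $H^{1/2}$–$H^{-1/2}$ pairing of $\omega$ with $\bar\partial(\omega\cdot N)=\bar\partial\omega\cdot N$; identifying this cancellation is what allows the trace loss to be exactly $1/2$ rather than $1$. Once the integer cases are in hand, the estimate for real $s\ge 1$ follows by real interpolation applied to the linear map $\omega\mapsto(\omega,\curl\omega,\Div\omega,\bar\partial\omega\cdot N)$ between the appropriate scales.
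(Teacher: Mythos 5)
The paper states Lemma~\ref{hodge} without proof, citing it implicitly as a standard Hodge-type elliptic estimate, so there is no paper proof to compare against; your argument must be judged on its own.

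Your proof is correct. The Friedrichs-type pointwise identity
\[
|\nabla\omega|^2=|\curl\omega|^2+(\Div\omega)^2+\partial_i(\omega_j\partial_j\omega_i)-\partial_j(\omega_j\Div\omega)
\]
is verified by expanding $\epsilon_{kij}\epsilon_{kmn}$, and your exploitation of the flat slab geometry is exactly what makes the half-derivative gain work: on $\Gamma=\mathbb{T}^2\times\{0,1\}$ with constant $N=\pm e_3$ and boundaryless cross-section, both boundary components collapse (after a tangential integration by parts) to $2\int_\Gamma\omega_\alpha\bar\partial_\alpha(\omega\cdot N)$ with consistent sign, and the $H^{1/2}$--$H^{-1/2}$ pairing plus Young lets you absorb. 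The normal-derivative-trading rules $\partial_3\omega_1=\bar\partial_1\omega_3+(\curl\omega)_2$, $\partial_3\omega_2=\bar\partial_2\omega_3-(\curl\omega)_1$, $\partial_3\omega_3=\Div\omega-\bar\partial_\alpha\omega_\alpha$ do reduce any $D^a\omega$ to purely tangential derivatives plus lower-order $\curl$/$\Div$ data, and applying the base case to $\bar\partial^{s-1}\omega$ and using commutation of $\curl$, $\Div$, and $\cdot\,N$ with $\bar\partial$ closes the integer case.

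The one place that deserves more care is the final sentence on non-integer $s$: you cannot interpolate a lower bound $\|\omega\|_{X_i}\ls\|T\omega\|_{Y_i}$ the way you interpolate an upper bound, since the $K$-functional decomposition of $\omega$ does not descend from a decomposition of $T\omega$ without a bounded right inverse. The cleaner route in this flat geometry is to apply the $s=1$ estimate directly to $\Lambda_\ast^{s-1}\omega$, where $\Lambda_\ast=\langle D_\ast\rangle$ is the tangential Bessel potential; since $\Lambda_\ast$ commutes with $\curl$, $\Div$, $\bar\partial$, and multiplication by the constant $N$, this gives the fractional tangential control at once and the normal-derivative trading then finishes. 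Alternatively, one can invoke that $T$ is bounded and bounded below on the endpoint scales (hence an isomorphism onto closed ranges) and interpolate the left inverse, but this should be said explicitly rather than attributed to bare ``real interpolation of $T$.''
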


\subsection{Normal trace estimates}

For our use of the above Hodge-type elliptic estimates, we also need the following normal trace estimates.
\begin{lemma}\label{normal trace}
It holds that
\begin{equation}\label{gga}
\abs{\bar{\partial}\omega \cdot N}_{-1/2}\ls \norm{\bar{\partial}\omega}_0+\norm{\Div\omega}_0
\end{equation}

\end{lemma}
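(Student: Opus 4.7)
The plan is to use a standard duality argument. Since $H^{-1/2}(\Gamma)$ is the dual of $H^{1/2}(\Gamma)$, I will write
\[
\abs{\bar\partial \omega \cdot N}_{-1/2}
= \sup_{\phi \in H^{1/2}(\Gamma),\,\abs{\phi}_{1/2}\le 1}\Big|\int_\Gamma (\bar\partial \omega \cdot N)\,\phi \, dx_\ast\Big|
\]
and bound the pairing on the right by the desired quantities for an arbitrary test function $\phi$.

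Fix such a $\phi$. By the standard trace/extension theorem on $\Omega=\mathbb{T}^2\times(0,1)$, there exists $\Phi\in H^1(\Omega)$ with $\Phi|_{\Gamma}=\phi$ and $\norm{\Phi}_1 \lesssim \abs{\phi}_{1/2}$. Writing $\bar\partial = \bar\partial_\alpha$ for a fixed tangential derivative and recalling that $N=\pm e_3$ is constant on each component of $\Gamma$, the divergence theorem gives
\[
\int_\Gamma (\bar\partial \omega_i)\,N_i\,\Phi \, dx_\ast
= \int_\Omega \partial_i\big(\bar\partial \omega_i \, \Phi\big)\, dx
= \int_\Omega \bar\partial(\Div\omega)\,\Phi \, dx + \int_\Omega \bar\partial \omega_i\,\partial_i\Phi \, dx,
\]
where I used that $\partial_i$ and $\bar\partial_\alpha$ commute in these flat Cartesian coordinates. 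The key point now is that, since $\bar\partial_\alpha$ is a purely tangential derivative and $N_\alpha=0$, integrating by parts in $\bar\partial$ on the first integral produces no boundary contribution:
\[
\int_\Omega \bar\partial(\Div\omega)\,\Phi \, dx = -\int_\Omega \Div\omega\,\bar\partial\Phi \, dx.
\]
Putting the two pieces together and applying Cauchy--Schwarz yields
\[
\Big|\int_\Gamma (\bar\partial \omega \cdot N)\,\phi\, dx_\ast\Big|
\le \norm{\Div\omega}_0\,\norm{\bar\partial\Phi}_0 + \norm{\bar\partial \omega}_0\,\norm{\nabla\Phi}_0
\lesssim \big(\norm{\Div\omega}_0 + \norm{\bar\partial\omega}_0\big)\,\norm{\Phi}_1.
\]

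Using $\norm{\Phi}_1 \lesssim \abs{\phi}_{1/2}$ and taking the supremum over admissible $\phi$ delivers \eqref{gga}. The only substantive step is the cancellation of the boundary term after integrating $\bar\partial$ by parts, which relies crucially on $\bar\partial$ being tangential; all other manipulations are standard duality and trace/extension. Since the geometry here (a flat slab) makes the extension operator and the commutation $\partial_i\bar\partial_\alpha=\bar\partial_\alpha\partial_i$ trivial, no genuine obstacle arises, and the proof is essentially mechanical once the duality framework is set up.
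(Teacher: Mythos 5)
Your proof is correct, and it takes a genuinely different route from the paper. The paper simply cites Section~5.9 of Taylor and the Helmholtz-type identity $-\Delta\omega=\curl\curl\omega-\nabla\Div\omega$, essentially outsourcing the argument to elliptic/Hodge theory. You instead give a self-contained duality proof: pair $\bar\partial\omega\cdot N$ with an arbitrary $\phi\in H^{1/2}(\Gamma)$, extend $\phi$ to $\Phi\in H^1(\Omega)$, apply Green's identity, and — crucially — integrate by parts once more in the tangential direction to move $\bar\partial$ off $\Div\omega$ and onto $\Phi$. That second integration by parts is the key step: without it you would need $\bar\partial\Div\omega\in L^2$, one derivative too many; with it, only $\Div\omega\in L^2$ and $\norm{\Phi}_1$ enter, and the periodicity of $\mathbb{T}^2$ in the tangential directions guarantees no boundary contribution. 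Your argument is essentially the classical $H(\Div)$ normal-trace theorem adapted to the vector field $\bar\partial\omega$, bypassing the grad-div/curl-curl machinery entirely. What Taylor's route buys is generality (arbitrary smooth domains, and it fits naturally alongside the Hodge estimate of Lemma~\ref{hodge}); what yours buys is a short, elementary, and transparent proof tailored to the flat slab geometry used in this paper. One small point worth making explicit: the Green's-identity manipulations should be carried out first for smooth $\omega$ and then extended by density, since the intermediate quantity $\partial_i\bar\partial\omega_i\cdot\Phi$ is not a priori integrable; the final identity
\begin{equation*}
\int_\Gamma (\bar\partial\omega\cdot N)\,\phi\,dx_\ast
= -\int_\Omega \Div\omega\,\bar\partial\Phi\,dx + \int_\Omega \bar\partial\omega_i\,\partial_i\Phi\,dx
\end{equation*}
involves only the controlled quantities and extends by continuity.
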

\begin{proof}
The estimates are well-known and follow from the identity $-\Delta\omega=\curl \curl \omega-\nabla\Div \omega$. We refer the reader to Section 5.9 of \cite{Taylor}.
\end{proof}

\subsection{Horizontal convolution-by-layers and commutation estimates}

As \cite{CS07,DS_10}, we will use the operation of horizontal convolution-by-layers which is defined as follows. Let $0\le \rho(x_\ast)\in C_0^{\infty}(\bR^2)$ be a standard mollifier such that $\text{spt}(\rho)=\overline{B(0,1)}$ and $\int_{\bR^2} \rho \,dx_{\ast}=1$, with corresponding dilated function $\rho_{\kappa}(x_\ast)=\frac{1}{\kappa^2}\rho(\frac{x_\ast}{\kappa}), \kappa>0$. We then define
\begin{equation}\label{lambdakdef}
\Lambda_{\kappa}g(x_{\ast},x_3)=\int_{\bR^2}\rho_{\kappa}(x_{\ast}-y_{\ast})g(y_{\ast},x_3)\,dy_{\ast}.
\end{equation}
By standard properties of convolution, the following estimates hold:
\begin{align}
&\abs{\Lambda_{\kappa}h}_s\ls \abs{h}_s,\quad  s\ge 0, \label{test3}\\
&\abs{\bar\partial\Lambda_{\kappa}h}_0\ls \dfrac{1}{\kappa^{1-s}}\abs{h}_s,\quad 0\le s\le 1.\label{loss}
\end{align}

The following commutator estimates play an important role in the boundary estimates.
\begin{lemma}\label{comm11}
For $\kappa>0$, we define the commutator
\begin{equation}
\left[\Lambda_{\kappa}, h\right]g\equiv \Lambda_{\kappa}(h g)-h\Lambda_{\kappa}g.
\end{equation}
Then we have
\begin{align}
&\abs{[\Lambda_{\kappa}, h]g}_0\ls  \abs{h}_{L^\infty}|g|_0,\label{es0-0}\\
&\abs{[\Lambda_{\kappa}, h]\bar\partial g}_0\ls \abs{h}_{W^{1,\infty}}|g|_0,\label{es1-0}\\
&\abs{[\Lambda_{\kappa}, h]\bar\partial g}_{1/2}\ls \abs{h}_{W^{1,\infty}}\abs{g}_{1/2}.\label{es1-1/2}
\end{align}
\end{lemma}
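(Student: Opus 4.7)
The plan is to establish the three commutator estimates in the order they are stated, with each later bound bootstrapping from the previous ones.

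For the first estimate \eqref{es0-0}, I would simply use the triangle inequality $\abs{[\Lambda_\kappa,h]g}_0 \le \abs{\Lambda_\kappa(hg)}_0 + \abs{h\Lambda_\kappa g}_0$ together with the $L^2$-boundedness of $\Lambda_\kappa$ (a consequence of $\rho_\kappa \ge 0$ with unit integral via Young's inequality, which is exactly \eqref{test3} with $s=0$) and the pointwise bound $|h\Lambda_\kappa g| \le \abs{h}_{L^\infty}|\Lambda_\kappa g|$. Both terms are then controlled by $\abs{h}_{L^\infty}|g|_0$.

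For \eqref{es1-0}, the standard Friedrichs commutator lemma is the natural tool. I would write the commutator in integral form,
\begin{equation}
[\Lambda_\kappa,h]\bar\partial g(x_\ast)=\int_{\mathbb R^2}\rho_\kappa(x_\ast-y_\ast)\bigl(h(y_\ast)-h(x_\ast)\bigr)\bar\partial g(y_\ast)\,dy_\ast,\nonumber
\end{equation}
and integrate by parts in $y_\ast$, generating two terms: one in which $\bar\partial$ lands on the kernel $\rho_\kappa(x_\ast-y_\ast)$ and one in which it lands on the factor $h(y_\ast)-h(x_\ast)$. The latter is just $-\Lambda_\kappa(\bar\partial h\cdot g)$, so Young's inequality gives the bound $\abs{\bar\partial h}_{L^\infty}|g|_0$. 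For the former, I would exploit the mean-value bound $|h(y_\ast)-h(x_\ast)|\le \abs{\bar\partial h}_{L^\infty}|x_\ast-y_\ast|$ and observe that the resulting kernel $|x_\ast-y_\ast||\bar\partial\rho_\kappa(x_\ast-y_\ast)|$ has $L^1$ norm independent of $\kappa$ by the scaling $\bar\partial\rho_\kappa(z)=\kappa^{-3}(\bar\partial\rho)(z/\kappa)$; Young's inequality then yields a bound of the right form. The only subtle point is this scaling invariance — it is the reason this bound does not degenerate as $\kappa\to 0$.

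For the $H^{1/2}$ bound \eqref{es1-1/2}, I would interpolate. The operator $T_h:g\mapsto[\Lambda_\kappa,h]\bar\partial g$ is linear in $g$, and \eqref{es1-0} already provides the $L^2\to L^2$ bound $\norm{T_h}_{L^2\to L^2}\lesssim \abs{h}_{W^{1,\infty}}$. To obtain a matching $H^1\to H^1$ bound, I would apply $\bar\partial$ and distribute:
\begin{equation}
\bar\partial[\Lambda_\kappa,h]\bar\partial g=[\Lambda_\kappa,h]\bar\partial(\bar\partial g)+[\Lambda_\kappa,\bar\partial h]\bar\partial g.\nonumber
\end{equation}
The first summand is controlled in $L^2$ by \eqref{es1-0} applied to $\bar\partial g$ in place of $g$, giving $\abs{h}_{W^{1,\infty}}|g|_1$. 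The second summand is controlled by \eqref{es0-0} applied with $\bar\partial h$ in place of $h$ and $\bar\partial g$ in place of $g$, yielding $\abs{\bar\partial h}_{L^\infty}|\bar\partial g|_0\le\abs{h}_{W^{1,\infty}}|g|_1$. Combining gives $\norm{T_h}_{H^1\to H^1}\lesssim\abs{h}_{W^{1,\infty}}$, and standard complex interpolation between $L^2$ and $H^1$ then furnishes \eqref{es1-1/2}. I expect no serious obstacle; the only delicate step is the scaling argument in the Friedrichs estimate, and the only structural trick is recognizing that the $H^{1/2}$ bound is cleanest obtained by interpolation rather than by direct Fourier-side computation on $\mathbb{T}^2$.
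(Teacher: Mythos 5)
The paper does not actually supply a proof of this lemma; it simply cites \cite{GuW_2016}. Your argument is a complete, self-contained proof by the standard route: the Friedrichs commutator computation (integrating by parts, splitting off the $-\Lambda_\kappa(\bar\partial h\cdot g)$ piece, and using that $\int|z|\,|\bar\partial\rho_\kappa(z)|\,dz$ is a $\kappa$-independent constant by scaling), followed by the distribution identity $\bar\partial[\Lambda_\kappa,h]\bar\partial g=[\Lambda_\kappa,h]\bar\partial^2 g+[\Lambda_\kappa,\bar\partial h]\bar\partial g$ to get the $H^1\to H^1$ bound and then interpolation. This is the argument that the cited reference uses, and each step of yours is correct — in particular the key scaling computation $\int|z|\kappa^{-3}|\bar\partial\rho(z/\kappa)|\,dz=\int|w|\,|\bar\partial\rho(w)|\,dw$ that makes \eqref{es1-0} uniform in $\kappa$, and the recognition that the $H^{1/2}$ bound should be obtained by interpolation of the operator $g\mapsto[\Lambda_\kappa,h]\bar\partial g$ rather than by a direct computation.
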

\begin{proof}
See, for instance \cite{GuW_2016}.
\end{proof}

Our energy estimates require the use of the following:
\begin{lemma}
Let $H^{\frac{1}{2}}(\Gamma)^\prime$ denote the dual space of $H^{\frac{1}{2}}(\Gamma)$. There exists a positive constant $C$ such that
\begin{equation}
\abs{\bar\partial F}_{-\frac{1}{2}}:=\abs{\bar\partial F}_{H^{\frac{1}{2}}(\Gamma)^\prime}\leq C\abs{F}_{\frac{1}{2}}, \forall F\in H^{\frac{1}{2}}(\Gamma).
\end{equation}

\end{lemma}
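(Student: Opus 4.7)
The assertion is that tangential differentiation is a bounded operator $\bar\partial : H^{1/2}(\Gamma)\to H^{-1/2}(\Gamma)$. Because $\Gamma=\mathbb T^2\times(\{0\}\cup\{1\})$ is the disjoint union of two copies of $\mathbb T^2$, it suffices to prove the bound on a single flat torus $\mathbb T^2$. My plan is to exploit this flat structure via Fourier series, which gives the estimate essentially for free; I will sketch a second, duality/interpolation approach as backup.

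\textbf{Fourier-series approach.} On $\mathbb T^2$, the Sobolev norms admit the characterization
\begin{equation*}
\abs{f}_{s}^{2}=\sum_{k\in\mathbb Z^{2}}(1+\abs{k}^{2})^{s}\abs{\hat f(k)}^{2},\qquad s\in\mathbb R,
\end{equation*}
and the tangential derivative $\bar\partial_{j}$ acts on Fourier side by the multiplier $ik_{j}$. Therefore, for any $F\in H^{1/2}(\Gamma)$,
\begin{equation*}
\abs{\bar\partial F}_{-1/2}^{2}=\sum_{k}(1+\abs{k}^{2})^{-1/2}\abs{k_{j}}^{2}\abs{\hat F(k)}^{2}\le \sum_{k}(1+\abs{k}^{2})^{1/2}\abs{\hat F(k)}^{2}=\abs{F}_{1/2}^{2},
\end{equation*}
using the elementary inequality $k_{j}^{2}\le 1+\abs{k}^{2}$. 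This gives the required estimate with $C=1$ on each torus component, hence (up to a harmless factor) on $\Gamma$.

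\textbf{Alternative via duality and interpolation.} Because $\Gamma$ is closed (no boundary), integration by parts yields the identity $\langle\bar\partial F,\phi\rangle=-\langle F,\bar\partial\phi\rangle$ for smooth $F,\phi$, and it extends by density. Thus
\begin{equation*}
\abs{\bar\partial F}_{-1/2}=\sup_{\abs{\phi}_{1/2}\le 1}\abs{\langle F,\bar\partial\phi\rangle}\le \abs{F}_{1/2}\sup_{\abs{\phi}_{1/2}\le 1}\abs{\bar\partial\phi}_{-1/2},
\end{equation*}
and one then bounds the latter by interpolating between the trivial mapping $\bar\partial: H^{1}\to L^{2}$ and its dual $\bar\partial: L^{2}\to H^{-1}$, obtaining boundedness of $\bar\partial:H^{1/2}\to H^{-1/2}$.

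\textbf{Main obstacle.} There is no real obstacle; the statement is a standard mapping property of tangential differentiation on a closed surface. The only points requiring minor attention are (i) confirming that $\bar\partial F$, initially defined as a distribution on $\Gamma$ by $\langle\bar\partial F,\phi\rangle:=-\langle F,\bar\partial\phi\rangle$ for $\phi\in C^{\infty}(\Gamma)$, extends to a bounded linear functional on $H^{1/2}(\Gamma)$ (guaranteed by either argument above), and (ii) noting that the flat toroidal geometry of $\Gamma$ assumed in \eqref{domain} allows us to avoid local charts or partitions of unity that would otherwise complicate a curved-surface argument.
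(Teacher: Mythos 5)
Your Fourier-series argument is correct, clean, and self-contained; the paper itself gives no proof of this lemma, deferring to a citation of Coutand--Shkoller \cite{DS_10}, so there is no in-paper argument to compare against. Given the flat geometry $\Gamma=\mathbb T^2\times(\{0\}\cup\{1\})$ assumed in \eqref{domain}, working on Fourier side and using $k_j^2\le 1+\abs{k}^2$ is exactly the right elementary route, and you correctly note that the two boundary components can be handled separately.

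One small remark on your ``alternative via duality and interpolation'': as written, the first displayed inequality is circular. After integrating by parts, $\abs{\langle F,\bar\partial\phi\rangle}\le\abs{F}_{1/2}\abs{\bar\partial\phi}_{-1/2}$, so the supremum you are left with, $\sup_{\abs{\phi}_{1/2}\le1}\abs{\bar\partial\phi}_{-1/2}$, is precisely the operator norm of $\bar\partial:H^{1/2}\to H^{-1/2}$ --- the quantity you are trying to bound. The duality step therefore contributes nothing; the entire content of the alternative proof is the interpolation between $\bar\partial:H^1\to L^2$ and its adjoint $\bar\partial:L^2\to H^{-1}$ (the latter obtained from the former by duality and the absence of boundary terms on the closed surface). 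Stated that way the alternative argument is also correct, but you should drop the misleading intermediate inequality and simply present the interpolation directly. Since the Fourier argument alone establishes the lemma with $C=1$, this does not affect the validity of your proof.
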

\begin{proof}
See, for instance \cite{DS_10}.
\end{proof}

\begin{definition}
{\itshape{Non-collinearity condition:}}

Denote $F_j=(F_{1j}, F_{2j}, F_{3j})$ the vector field corresponding to the $j$th column
of the matrix $F$, we say $F$ satisfy the so-called non-collinearity condition if
 among the three vectors $F_1, F_2$, and $F_3$ there are two which are non-collinear at
each point of the initial free boundary, i. e.,
\begin{equation}\label{Nclinear}
\exists~~i, j\in {1, 2, 3}, i\neq j; \abs{F_i\times F_j}\geq \delta>0,~~\text{on}~~\Gamma,
\end{equation}
where $\delta$ is a fixed constant.
\end{definition}

\begin{lemma}\label{IG_0}
Let $G_0$ satisfy the non-collinearity condition \eqref{taylor1} and $G_0^T\cdot N=0$ on the initial boundary, then we have
\begin{equation}\label{G_0G}
\abs{\bar\partial^4\eta}_{\frac{1}{2}}\lesssim\|G_0^T\cdot\nabla\eta\|_{H^4(\Omega)}.
\end{equation}
\end{lemma}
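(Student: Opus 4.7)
The plan is to exploit the boundary condition $G_0^T \cdot N = 0$ to kill the normal-derivative contribution of $G_0^T \cdot \nabla \eta$ on $\Gamma$, reducing the identity to a purely tangential one that can then be inverted via the non-collinearity condition \eqref{taylor1}.

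First, since $N = \pm e_3$ on $\Gamma$, the condition $G_0^T \cdot N = 0$ yields $G_{03i}|_{\Gamma} = 0$ for $i = 1,2,3$. Hence on $\Gamma$,
\begin{equation*}
(G_0^T \cdot \nabla \eta)_{ij} = G_{0ki}\,\partial_k \eta_j = \sum_{\alpha=1}^{2} G_{0\alpha i}\,\bar\partial_\alpha \eta_j,
\end{equation*}
so only tangential derivatives of $\eta$ survive. For each fixed $j$ this is the linear system
\begin{equation*}
M\begin{pmatrix}\bar\partial_1\eta_j\\ \bar\partial_2\eta_j\end{pmatrix} = V^j, \qquad V^j_i := (G_0^T\cdot\nabla\eta)_{ij},
\end{equation*}
where $M = [G_{01}, G_{02}] \in \mathbb{R}^{3\times 2}$. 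The Gram determinant $\det(M^T M) = |G_{01}|^2|G_{02}|^2 - (G_{01}\cdot G_{02})^2 = |G_{01}\times G_{02}|^2 \geq \delta^2$ is bounded below on $\Gamma_2$ by \eqref{taylor1}, so the Moore--Penrose left inverse $L := (M^T M)^{-1} M^T$ is a well-defined $2\times 3$ matrix, algebraic in the boundary trace of $G_0$, with all quantities uniformly controlled by $\delta^{-1}$ and $|G_0|_{L^\infty(\Gamma_2)}$.

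This inversion gives the pointwise identity $\bar\partial\eta_j = L\cdot V^j$ on $\Gamma_2$. Applying three more tangential derivatives and taking the $H^{1/2}(\Gamma_2)$ norm yields
\begin{equation*}
\abs{\bar\partial^4\eta}_{1/2} \lesssim \bigl|\bar\partial^3\bigl(L\cdot (G_0^T\cdot\nabla\eta)\bigr)\bigr|_{1/2}.
\end{equation*}
The leading term, $L\cdot\bar\partial^3(G_0^T\cdot\nabla\eta)$, is estimated via \eqref{co123} by $|L|_{W^{1,\infty}}\,|G_0^T\cdot\nabla\eta|_{H^{7/2}(\Gamma_2)}$, and the last factor is $\lesssim\|G_0^T\cdot\nabla\eta\|_{H^4(\Omega)}$ by the trace inequality, which is precisely the claim. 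The remaining terms of the Leibniz expansion, in which some derivatives fall on $L$, are treated by the product/commutator estimates \eqref{co1}--\eqref{co3}, together with the Sobolev embedding $H^{7/2}(\Gamma_2)\hookrightarrow W^{1,\infty}(\Gamma_2)$ and trace, exploiting that $L$ is an algebraic function of $G_0|_{\Gamma_2}$ inheriting regularity from $G_0\in H^4(\Omega)$.

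The main obstacle is tracking the tangential Sobolev regularity of the inverse matrix $L$ and ensuring the constants are uniform. Since the denominator $|G_{01}\times G_{02}|^2$ is bounded below by $\delta^2$, composition with the smooth rational map $M\mapsto (M^T M)^{-1}M^T$ preserves $H^s$-regularity on the boundary for all $s\le 7/2$, which is exactly what is needed to close the product estimates above. The true geometric content of the lemma is then the tangential reduction afforded by $G_0^T\cdot N = 0$: it is this reduction, combined with the injectivity provided by \eqref{taylor1}, that produces the half-derivative gain $\bar\partial^4\eta\in H^{1/2}(\Gamma_2)$ from the $H^4(\Omega)$-control of $G_0^T\cdot\nabla\eta$.
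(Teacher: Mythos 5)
Your proof is correct and follows the same core strategy as the paper: use $G_0^T\cdot N=0$ to reduce the trace of $G_0^T\cdot\nabla\eta$ on $\Gamma$ to a purely tangential expression, then use non-collinearity to invert for $\bar\partial\eta$ and close with the trace theorem. The one genuine difference is technical but worth noting. The paper observes that at least one of the three $2\times2$ minors of $M=[G_{01},G_{02}]$ has magnitude $\ge\delta^2/3$, fixes one ``without loss of generality,'' and inverts that $2\times 2$ block. Strictly speaking this is pointwise information: which minor dominates can vary across $\Gamma_2$, so a single global $2\times2$ inversion is not available without a localization or case-by-case argument. Your use of the Moore--Penrose left inverse $L=(M^TM)^{-1}M^T$, with $\det(M^TM)=|G_{01}\times G_{02}|^2\ge\delta^2$ uniformly on $\Gamma_2$, sidesteps this entirely: $L$ is a single, globally well-defined rational function of $G_0|_{\Gamma_2}$ with denominator bounded below by $\delta^2$, so the inversion, the $W^{1,\infty}$ and $H^{3.5}$ control of $L$, and the subsequent product estimates \eqref{co123}, \eqref{co1}--\eqref{co3} all go through without any selection of a distinguished minor. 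This buys a cleaner and more uniform argument, at essentially no extra cost. One small remark that applies equally to your write-up and the paper's: the conclusion \eqref{G_0G} should be read as a bound on $|\bar\partial^4\eta|_{H^{1/2}(\Gamma_2)}$ (non-collinearity is assumed only on $\Gamma_2$), which is indeed how it is used in \eqref{I_1}.
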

\begin{proof}
On the boundary
\begin{equation*}
G_0^TN=0 \Longleftrightarrow G_{03l}=0, l=1, 2, 3,
\end{equation*}
hence
\begin{equation*}
G_0^T\cdot \nabla\eta=G_{01l}\partial_1\eta+G_{02l}\partial_2\eta,
\end{equation*}
by trace theorem, we have
\begin{equation}\label{trace}
\abs{G_{01l}\partial_1\eta+G_{02l}\partial_2\eta}_{H^{3.5}(\Gamma)}\lesssim\|G_0^T\cdot\nabla\eta\|_{H^4(\Omega)}, l=1, 2, 3.
\end{equation}
If non-collinear condition \eqref{taylor1} is satisfied, then the following matrix is  full rank, 
\begin{equation}       
\left(                 
  \begin{array}{ccc}   
    G_{011} & G_{012} & G_{013}\\  
    G_{021} & G_{022} & G_{023}\\  
  \end{array}
\right),                 
\end{equation}
and
\begin{equation*}
\begin{split}
\abs{G_{012}G_{023}-G_{022}G_{013}}^2+\abs{G_{013}G_{021}-G_{011}G_{023}}^2+\abs{G_{012}G_{021}-G_{022}G_{011}}^2\geq \delta^2,
\end{split}
\end{equation*}
then, one of them must be bigger than $\frac{\delta^2}{3}$, without the loss of generality, suppose
\begin{equation*}
\begin{split}
\abs{G_{012}G_{023}-G_{022}G_{013}}^2\geq \frac{\delta^2}{3}.
\end{split}
\end{equation*}
Assume
\begin{equation}\label{construct}
\begin{cases}
G_{012}\partial_1\eta+G_{022}\partial_2\eta&=f_1,\\
G_{013}\partial_1\eta+G_{023}\partial_2\eta&=f_2,\\
\end{cases}
\end{equation}
$f_1, f_2\in H^{3.5}(\Gamma).$

Then, we have
\begin{equation}       
\left(                 
  \begin{array}{cc}   
    \partial_1\eta \\  
    \partial_2\eta \\  
  \end{array}
\right)=\frac{P^\star}{P}
\left(                 
  \begin{array}{cc}   
    f_1 \\  
    f_2 \\  
  \end{array}
\right)          
\end{equation}

here
\begin{equation}       
P=G_{012}G_{023}-G_{022}G_{013}, 
P^\star=
\left(                 
  \begin{array}{ccc}   
    G_{023} & -G_{022} \\  
    -G_{013} & G_{012} \\ 
  \end{array}
\right),              
\end{equation}
hence, we have
\begin{equation*}
\abs{\partial_1^4\eta}_{\frac{1}{2}}\lesssim\abs{f_1}_{H^{3.5}}+\abs{f_2}_{H^{3.5}}\lesssim\|G_0^T\cdot\nabla\eta\|_{H^4(\Omega)},
\end{equation*}
furthermore, we can obtain
\begin{equation*}
\abs{\bar\partial^4\eta}_{\frac{1}{2}}\lesssim\|G_0^T\cdot\nabla\eta\|_{H^4(\Omega)}.
\end{equation*}
\end{proof}

\subsection{Geometric identities}
We recall some useful identities which can be checked directly.
For a nonsingular matrix $\mathcal F$, we have the following identities for differentiating its determinant $J$ and $\mathcal A=\mathcal F^{-T}$:
\begin{align}
\label{dJ}
&\partial J=\dfrac{\partial J}{\partial {\mathcal F}_{ij}}\partial {\mathcal F}_{ij} =  J{\mathcal{A}}_{ij}\partial {\mathcal F}_{ij},\\
&\partial {\mathcal{A}}_{ij}  = -{\mathcal{A}}_{i\ell}\partial {\mathcal F}_{m\ell}{\mathcal{A}}_{mj},
\label{partialF}
\end{align}
where $\partial$ can be $D$, $\bar{\partial}$ and $\partial_t$ operators. Moreover, we have the Piola identity
\begin{equation}\label{polia}
\partial_j\left(J{\mathcal{A}}_{ij}\right) =0.
\end{equation}

\section{Nonlinear $\kappa$-approximate problem}

Our goal of this section is to introduce our approximation of \eqref{eq:mhd} and then derive the uniform estimates for the approximate solutions.

\subsection{The nonlinear approximate $\kappa$-problem}

For $\kappa>0$, we consider the following sequence of approximate problems:
\begin{equation}\label{approximate}
\begin{cases}
\partial_t\eta =v+\fk &\text{in } \Omega,\\
\partial_tv  +\nabla_{\a^\kappa} q =\partial_m(\partial_n\eta^iG_{0n\ell}G_{0m\ell})  &\text{in } \Omega,\\
 \Div_{\a^\kappa} v = 0 &\text{in  }\Omega,\\
  q=0 & \text{on  }\Gamma,\\
 (\eta,v)\mid_{t=0} =(\text{Id}, v_0).
 \end{cases}
\end{equation}
Here the matrix $\mathcal A^{\kappa}=\a(\eta^\kappa)$ (and $J^{\kappa}$, etc.) with $\eta^\kappa$ the boundary smoother of $\eta$ defined as the solution to the following elliptic equation
\begin{equation}
\label{etadef}
\begin{cases}
-\Delta \eta^{\kappa}=-\Delta\eta &\text{in }\Omega,\\
\eta^{\kappa}=\Lambda_{\kappa}^2\eta &\text{on }\Gamma.
\end{cases}
\end{equation}
In the first equation of \eqref{approximate} we have introduced the modification term $\psi^{\kappa}=\psi^{\kappa}(\eta,v)$ as the solution to the following elliptic equation
\begin{equation}
\label{etaaa}
\begin{cases}
-\Delta \psi^{\kappa}=0&\text{in } \Omega,
\\  \psi^{\kappa}= \Delta_{*}^{-1} \mathbb{P}\left(\Delta_{*}\eta_{j}\a^{\kappa}_{j\alpha}\partial_{\alpha}{\Lambda_{\kappa}^2 v}-\Delta_{*}{\Lambda_{\kappa}^2\eta}_{j}\a^{\kappa}_{j\alpha}\partial_{\alpha} v\right) &\text{on }\Gamma,
\end{cases}
\end{equation}
where $\mathbb{P} f=f- \int_{\mathbb T^2}f$ and $\Delta_{*}^{-1}$ is the standard inverse of the Laplacian $\Delta_\ast$ on $\mathbb T^2$.

\begin{remark}
 Note that the modification term $\fk\rightarrow 0$ as $\kappa\rightarrow 0$. The introduction of $\fk$ is to eliminate two troublesome terms arising in the tangential energy estimates, which combinedly vanish as $\kappa\rightarrow 0$ but are out of control when $\kappa>0$.
\end{remark}

\begin{remark}
 Note that our definition of $\eta^{\kappa}$ is different from $\Lambda_{\kappa}^2\eta$ used in \cite{CS07,DS_10}, it only smooths $\eta$ on the boundary $\Gamma$ but not smooths horizontally in the interior $\Omega$. Though, it is sufficient to restore the nonlinear symmetric structure on the boundary in the tangential energy estimates since our $\eta^{\kappa}$ still equals to $\Lambda_{\kappa}^2\eta$ on $\Gamma$. On the other hand, our definition of $\eta^\kappa$ enables us to have the control of $\norm{G_0^T\cdot\nabla  \eta^{\kappa}}_4$ which arises in the curl and divergence estimates, while $\norm{G_0^T\cdot\nabla  \Lambda_{\kappa}^2\eta}_4$ is out of control due to the bad commutator term $\norm{[G_0^T\cdot\nabla, \Lambda_{\kappa}^2]\eta}_4$ since generally $G_{03i}\neq 0, i=1, 2, 3$ and the commutator estimates \eqref{es1-0} and \eqref{es1-1/2} only work in tangential directions.
\end{remark}

\subsection{$\kappa$-independent energy estimates}\label{apest}

For each $\kappa>0$, we can show that there exists a time $T_\kappa>0$ depending on the initial data and $\kappa>0$ such that there is a unique solution $(v, q, \eta)=(v(\kappa), q(\kappa), \eta(\kappa))$ to \eqref{approximate} on the time interval $[0,T_\kappa]$. For notational simplification, we will not explicitly write the dependence of the solution on $\kappa$. The purpose of this section is to derive the $\kappa$-independent estimates of the solutions to \eqref{approximate}, which enables us to consider the limit of this sequence of solutions as $\kappa\rightarrow 0$.

We take the time $T_{\kappa}>0$  sufficiently small so that for $t\in[0,T_{\kappa}]$,
\begin{align}
\label{ini2}
&-\nabla q (t)\cdot N\geq \dfrac{\lambda}{2} \text{ on }\Gamma_1,\\
\label{inin3}&\abs{J^{\kappa}(t)-1}\leq \dfrac{1}{8} \text{ and } \abs{\a_{ij}^{\kappa}(t)-\delta_{ij}}\leq \dfrac{1}{8} \text{ in }\Omega.
\end{align}
We define the high order energy functional:
\begin{equation}
\mathfrak{E}^{\kappa} =\norm{v}_4^2+\norm{\eta}_4^2+\norm{G_0^T\cdot\nabla \eta}_4^2+\abs{\bar\partial^4\Lambda_{\kappa}\eta_i\a^{\kappa}_{i3}}^2_{L^2(\Gamma_1)}.
\end{equation}
We will prove that $\mathfrak{E}^{\kappa}$ remains bounded on a time interval independent of $\kappa$, which is stated as the following theorem.

\begin{theorem} \label{th43}
There exists a time $T_1$ independent of $\kappa$ such that
\begin{equation}
\label{bound}
\sup_{[0,T_1]}\mathfrak{E}^{\kappa}(t)\leq 2M_0,
\end{equation}
where $M_0=P(\norm{v_0}_4^2+\norm{G_0}_4^2).$
\end{theorem}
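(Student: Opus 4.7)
The plan is a bootstrap/continuity argument. I define $T_1\le T_\kappa$ to be the largest time on which $\mathfrak{E}^\kappa(t)\le 2M_0$ together with the persistence conditions \eqref{ini2}--\eqref{inin3} still hold, and I will derive a differential inequality
\begin{equation*}
\frac{d}{dt}\mathfrak{E}^\kappa(t)\le P\bigl(\mathfrak{E}^\kappa(t)\bigr)
\end{equation*}
with a polynomial $P$ independent of $\kappa$. Since $\mathfrak{E}^\kappa(0)\le M_0$, standard ODE comparison then produces an existence time $T_1$ that depends only on $M_0$ on which $\mathfrak{E}^\kappa(t)\le 2M_0$, which is the claim.

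The heart of the estimate is the tangential energy identity. I apply $\bar\partial^4$ to the momentum equation in \eqref{approximate}, test against $\bar\partial^4 v$, and integrate by parts in the pressure and in the elastic forcing. The two dangerous places where top-order terms pile up are in $\int_\Omega \bar\partial^4(\mathcal{A}^\kappa_{ij}\partial_j q)\,\bar\partial^4 v_i$ and in the corresponding boundary piece, which produce exactly the $\mathcal R_Q$ and $\widetilde{\mathcal R}_Q$ in the introduction. To kill these I pass to Alinhac's good unknowns
\begin{equation*}
\mathcal V=\bar\partial^4 v-\bar\partial^4\eta\cdot\nabla_{\mathcal{A}^\kappa}v,\qquad \mathcal Q=\bar\partial^4 q-\bar\partial^4\eta\cdot\nabla_{\mathcal{A}^\kappa}q,
\end{equation*}
so that the Alinhac cancellation wipes out the interior $\mathcal R_Q$ and the highest-order part of $\widetilde{\mathcal R}_Q$. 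The boundary regularizer $\psi^\kappa$ from \eqref{etaaa} is designed so that the unwanted cross-commutator $[\Lambda_\kappa^2,\mathcal A^\kappa\!\cdot]$ on $\Gamma$ becomes an exact time derivative, which is why the modified formula $\dt\eta=v+\psi^\kappa$ was imposed.

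The remaining boundary integral splits as $\int_{\Gamma_1}+\int_{\Gamma_2}$. On $\Gamma_1$ the Rayleigh-Taylor condition \eqref{ini2} yields the coercive contribution
\begin{equation*}
\tfrac12\frac{d}{dt}\int_{\Gamma_1}(-\nabla q\cdot N)\bigl|\bar\partial^4\Lambda_\kappa\eta_i\,\mathcal A^\kappa_{i3}\bigr|^2,
\end{equation*}
which is precisely the boundary term in $\mathfrak{E}^\kappa$; the rest is lower order and absorbable. On $\Gamma_2$ the pressure carries no information, so I pair in the $H^{-1/2}$--$H^{1/2}$ duality: the dangerous term is bounded by $|\mathcal V\cdot n|_{-1/2}\,|\bar\partial^4\eta|_{1/2}$, and Lemma \ref{IG_0} absorbs $|\bar\partial^4\eta|_{1/2}$ into $\|G_0^T\cdot\nabla\eta\|_4$ thanks to the non-collinearity condition \eqref{taylor1}, while $|\mathcal V\cdot n|_{-1/2}$ is handled through the normal-trace estimate \eqref{gga} and $\Div_{\mathcal{A}^\kappa}v=0$. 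This half-derivative gain is exactly the new ingredient that makes the mixed-type estimate work. To promote this tangential information to full $H^4$ control I invoke Hodge decomposition \eqref{hodd}: the divergence of $v$ is controlled by $\Div_{\mathcal{A}^\kappa}v=0$ via commutator estimates, the curl via an evolution equation obtained from taking $\curl$ of the momentum equation (where the elastic forcing generates a contribution involving $\|G_0^T\cdot\nabla\eta\|_4$, which is why this quantity sits in $\mathfrak{E}^\kappa$), and $\|G_0^T\cdot\nabla\eta\|_4$ itself is controlled by Gr\"onwall applied to the transport-type identity $\partial_t(G_0^T\cdot\nabla\eta)=G_0^T\cdot\nabla(v+\psi^\kappa)$; the auxiliary bounds $\|\psi^\kappa\|_4$ and $\|\eta^\kappa-\eta\|_4$ follow from the elliptic problems \eqref{etadef}--\eqref{etaaa} together with Lemma \ref{comm11} and carry no negative power of $\kappa$.

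The main obstacle I expect is the delicate book-keeping in the tangential identity. After $\bar\partial^4$ and the substitution by good unknowns, many commutators involving $[\bar\partial^4,\mathcal A^\kappa]$ and $[\Lambda_\kappa^2,\,\cdot\,]$ appear, and I must verify both that the Alinhac cancellation survives with $\mathcal A^\kappa$ in place of $\mathcal A$ (up to commutator pieces exactly killed by $\psi^\kappa$), and that every remaining piece on $\Gamma_2$ can be routed through Lemma \ref{IG_0} so that it costs at most a half tangential derivative of $\eta$ on the boundary; any accidental full derivative would reintroduce a factor $\kappa^{-1}$ via \eqref{loss} and destroy the $\kappa$-uniform bound. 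Once these two structural points are pinned down, summing the tangential identity with the divergence, curl and elastic-transport estimates and absorbing lower-order terms by Young's inequality yields the desired inequality $\tfrac{d}{dt}\mathfrak{E}^\kappa\le P(\mathfrak{E}^\kappa)$, and the theorem follows from a standard continuity argument.
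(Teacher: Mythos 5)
Your plan tracks the paper's proof closely in all of its essential ideas: the $\kappa$-uniform bootstrap leading to a time $T_1$ depending only on $M_0$, the Alinhac good-unknown cancellation for the $\bar\partial^4$-tangential estimate, the split $\Gamma=\Gamma_1\cup\Gamma_2$ with the Rayleigh-Taylor boundary energy on $\Gamma_1$, the $H^{-1/2}$--$H^{1/2}$ duality together with Lemma \ref{IG_0} on $\Gamma_2$, and the reconstruction of full $H^4$ control via the Hodge lemma together with curl, divergence and transport estimates.

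There is, however, one concrete slip that would break the argument if carried out as written: your good unknowns use $\bar\partial^4\eta$, whereas the cancellation structure requires $\bar\partial^4\eta^\kappa$ (in the paper, $\bar\partial^2\Delta_*\eta^\kappa$), because the approximate operator is $\nabla_{\mathcal A^\kappa}$ with $\mathcal A^\kappa=\mathcal A(\eta^\kappa)$. Differentiating $\mathcal A^\kappa$ via \eqref{partialF} produces $\bar\partial^4\partial\eta^\kappa$, not $\bar\partial^4\partial\eta$, so defining $\mathcal V=\bar\partial^4 v-\bar\partial^4\eta\cdot\nabla_{\mathcal A^\kappa}v$ leaves behind a residue of the form $\partial^{\mathcal A^\kappa}\bigl(\bar\partial^4(\eta^\kappa-\eta)\cdot\nabla_{\mathcal A^\kappa}q\bigr)$. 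That residue costs $\norm{\eta^\kappa-\eta}_5$, which is not controlled by $\mathfrak{E}^\kappa$ (on $\Gamma$ it reduces to $|\Lambda_\kappa^2\eta-\eta|_{9/2}$, half a derivative above the energy), so the estimate would not close. The fix is exactly to put $\eta^\kappa$ into the good unknowns, as in \eqref{gun} and \eqref{commf}, after which $\mathcal C_i(f)$ is genuinely lower order and bounded by \eqref{comest}.

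Two smaller imprecisions that you should adjust when writing details. First, the role of $\psi^\kappa$ is not to turn the $[\Lambda_\kappa^2,\cdot]$ commutator into a time derivative; rather, it produces the extra boundary term $\mathcal I_5$ in \eqref{gg3} whose two pieces exactly cancel $\mathcal I_{2b}$ and $\mathcal I_{4b}$, which individually are out of control for $\kappa>0$ and only vanish in the formal limit. Second, on $\Gamma_2$ the paper does not route the estimate through $|\mathcal V\cdot n|_{-1/2}$ and the normal-trace bound \eqref{gga} (that one is used later in the Hodge reconstruction); instead it uses the duality $\langle\bar\partial^4\Lambda_\kappa^2\eta,\bar\partial^4 v\rangle_{H^{1/2},H^{-1/2}}$ together with the mollifier-independent bound $|\bar\partial F|_{-1/2}\lesssim|F|_{1/2}$ to place $|\bar\partial^4 v|_{-1/2}\lesssim\norm{v}_4$, and Lemma \ref{IG_0} to place $|\bar\partial^4\eta|_{1/2}\lesssim\norm{G_0^T\cdot\nabla\eta}_4$; see \eqref{I_1}--\eqref{I_2}. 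Your idea is in the same spirit, but the half-derivative accounting is more delicate than invoking \eqref{gga}, and you should check it does not secretly cost $\kappa^{-1}$ via \eqref{loss}.
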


\subsubsection{Preliminary estimates of $\eta^{\kappa}$ and $\psi^{\kappa}$}

We begin our estimates with the boundary smoother $\eta^{\kappa}$ defined by \eqref{etadef} and the modification term $\psi^{\kappa}$ defined by \eqref{etaaa}.

\begin{lemma}
\label{preest}
The following estimates hold:
\begin{align}
\label{tes1}\norm{\eta^{\kappa}}_4&\ls \norm{\eta}_4,\\
\label{tes2}\norm{G_0^T\cdot\nabla\eta^{\kappa}}_4^2&\leq P(\norm{\eta}_4,\norm{G_0}_4,\norm{G_0^T\cdot\nabla\eta}_4),\\
\label{tes0}\norm{\partial_t\eta^{\kappa} }_4&\leq P(\norm{\eta}_4, \norm{v}_4),\\
\label{fest1}\norm{\fk}_{4}&\leq P(\norm{\eta}_4,\norm{v}_3),\\
\label{fest22}\norm{G_0^T\cdot\nabla \fk}_4&\leq P(\norm{\eta}_4,\norm{v}_4,\norm{G_0^T\cdot\nabla\eta}_4),\\
\label{fest2}\norm{\partial_t \fk}_4&\leq P(\norm{\eta}_4,\norm{v}_4,\norm{\partial_t v}_3).
\end{align}
\end{lemma}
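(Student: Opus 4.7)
The plan is to treat each estimate by standard elliptic regularity for the Dirichlet Laplacian, applied to the defining equations \eqref{etadef} and \eqref{etaaa}, combined with the mollification properties of $\Lambda_\kappa$ recorded in \eqref{test3}--\eqref{loss} and Lemma \ref{comm11}, the trace theorem, and the product estimates of \eqref{co0}--\eqref{co2}. The recurring strategy is that the elliptic problem $-\Delta u = f$ in $\Omega$, $u = g$ on $\Gamma$ yields $\|u\|_4 \lesssim \|f\|_2 + |g|_{7/2}$.

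For the $\eta^\kappa$ estimates \eqref{tes1}--\eqref{tes0}, I would set $w := \eta^\kappa - \eta$, which is harmonic with $w|_\Gamma = (\Lambda_\kappa^2 - I)\eta$. Elliptic regularity plus uniform $H^s$-boundedness of $\Lambda_\kappa$ yields $\|w\|_4 \lesssim |\eta|_{7/2} \lesssim \|\eta\|_4$, which proves \eqref{tes1}. For \eqref{tes2} I would apply $G_0^T \cdot \nabla$, producing $-\Delta(G_0^T\cdot\nabla w) = [-\Delta, G_0^T\cdot\nabla] w$ in $\Omega$ (the interior commutator is controllable in $H^2$ by $\|\eta\|_4$ and $\|G_0\|_4$ via \eqref{co0}). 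On $\Gamma$, since $G_{03l}|_\Gamma = 0$, the operator $G_0^T\cdot\nabla$ reduces to the tangential operator $G_{0\alpha l}\partial_\alpha$, so the trace of $G_0^T\cdot\nabla w$ becomes $G_{0\alpha l}\partial_\alpha(\Lambda_\kappa^2 - I)\eta_l$, which I would decompose as $[G_{0\alpha l}, \Lambda_\kappa^2]\partial_\alpha \eta_l + (\Lambda_\kappa^2 - I)(G_{0\alpha l}\partial_\alpha\eta_l)$; the first piece is bounded in $H^{7/2}$ through Lemma \ref{comm11}, and the second, after again recalling $G_{0 3 l}|_\Gamma = 0$, identifies with $(\Lambda_\kappa^2 - I)(G_0^T\cdot\nabla\eta)|_\Gamma$ and hence is bounded by $\|G_0^T\cdot\nabla\eta\|_4$. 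Finally, \eqref{tes0} follows by applying $\partial_t$ to \eqref{etadef}, noting $\partial_t\eta = v + \psi^\kappa$, and invoking \eqref{tes1} and \eqref{fest1}.

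For the $\psi^\kappa$ estimates \eqref{fest1}--\eqref{fest2}, the point is the two-derivative gain from $\Delta_*^{-1}$ in the boundary data of \eqref{etaaa}. To prove \eqref{fest1}, I would use $\|\psi^\kappa\|_4 \lesssim |\psi^\kappa|_{7/2}$ together with $\Delta_*^{-1}\colon H^{3/2}(\mathbb{T}^2)\to H^{7/2}(\mathbb{T}^2)$, reducing the task to bounding the $\mathbb{P}$-argument in $H^{3/2}(\Gamma)$. Each of the bilinear terms $\Delta_*\eta_j\a^\kappa_{j\alpha}\partial_\alpha\Lambda_\kappa^2 v$ and $\Delta_*\Lambda_\kappa^2\eta_j\a^\kappa_{j\alpha}\partial_\alpha v$ has factors lying in $H^{3/2}(\Gamma)$ by trace ($\Delta_*\eta\in H^{3/2}$ from $\|\eta\|_4$, and $\partial_\alpha v\in H^{3/2}$ from $\|v\|_3$), while $\a^\kappa$ is bounded polynomially in $\|\eta\|_4$ through \eqref{tes1} and \eqref{inin3}; since $H^{3/2}(\mathbb{T}^2)$ is an algebra, this closes \eqref{fest1}. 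Estimate \eqref{fest22} proceeds as in \eqref{tes2}: apply $G_0^T\cdot\nabla$, use $G_{03l}|_\Gamma = 0$ to identify the boundary data as tangential so that $\partial_\alpha$ can be pulled inside $\Delta_*^{-1}$, and bound the interior commutator via \eqref{fest1} and product estimates. Finally, \eqref{fest2} comes from differentiating \eqref{etaaa} in time; the resulting terms distribute one $\partial_t$ across the factors, producing $\partial_t\eta = v + \psi^\kappa$ (bounded by \eqref{fest1}), $\partial_t\a^\kappa$ (bounded by \eqref{tes0}), and the single worst term $\Delta_*\eta\,\a^\kappa\,\nabla\partial_t v$, which by trace requires only $\|\partial_t v\|_3$.

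The main obstacle is the $G_0^T\cdot\nabla$ estimates \eqref{tes2} and \eqref{fest22}. Because $G_0^T\cdot\nabla$ contains the normal component $G_{03l}\partial_3$ which vanishes only on $\Gamma$ but not in the interior, one cannot directly commute it with the boundary mollifier $\Lambda_\kappa^2$. The essential observation is that the boundary trace of $G_0^T\cdot\nabla(\cdot)$ collapses to a purely tangential expression by exploiting $G_{03l}|_\Gamma = 0$, so that the mollifier commutator estimates of Lemma \ref{comm11} (which are inherently tangential) become applicable. This is the same structural mechanism that underlies the half-derivative gain in Lemma \ref{IG_0}, and it is the key compatibility needed to make the $\kappa$-approximation asymptotically consistent with the a priori energy estimates.
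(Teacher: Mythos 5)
Your proposal is correct and follows essentially the same route as the paper: elliptic regularity for the Dirichlet problems \eqref{etadef} and \eqref{etaaa}, the trace theorem, the $\Lambda_\kappa$ bounds \eqref{test3} and the commutator estimates of Lemma \ref{comm11}, and — crucially — the observation that $G_{03l}|_\Gamma=0$ makes $G_0^T\cdot\nabla$ tangential on $\Gamma$ so that the mollifier commutators become applicable. The only cosmetic differences (subtracting $w=\eta^\kappa-\eta$ rather than treating $\eta^\kappa$ directly, and using the $H^{3/2}(\mathbb{T}^2)$ algebra in place of the trace-theorem reduction to an interior $H^2$ bound for \eqref{fest1}) do not change the structure of the argument.
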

\begin{proof}
First, the standard elliptic regularity theory on the problem \eqref{etadef}, the trace theorem and the estimate \eqref{test3} yield
\begin{equation*}
\norm{\eta^{\kappa}}_4\ls \norm{\Delta\eta}_2+\abs{\Lambda_{\kappa}^2\eta}_{7/2}\ls\norm{\eta}_4+\abs{\eta}_{7/2}\ls \norm{\eta}_4,
\end{equation*}
which implies \eqref{tes1}. To prove \eqref{tes2}, we apply $G_0^T\cdot\nabla$ to \eqref{etadef} to find that, since $G_0^T\cdot N=0$ on $\Gamma$,
\begin{equation*}
\begin{cases}
-\Delta(G_0^T\cdot\nabla\eta^{\kappa})=-\Delta(G_0^T\cdot\nabla\eta)-[G_0^T\cdot\nabla, \Delta]\eta +[G_0^T\cdot\nabla, \Delta]\eta^{\kappa} &\text{in } \Omega,\\
G_0^T\cdot\nabla\eta^{\kappa}=G_0^T\cdot\nabla\Lambda_{\kappa}^2\eta &\text{on } \Gamma.
\end{cases}
\end{equation*}
We then have, since $H^2$ is a multiplicative algebra and by \eqref{tes1},
\begin{equation*}
\begin{split}
\norm{G_0^T\cdot\nabla\eta^{\kappa}}_4&\ls \norm{-\Delta(G_0^T\cdot\nabla\eta )-[G_0^T\cdot\nabla, \Delta]\eta +[G_0^T\cdot\nabla, \Delta]\eta^{\kappa}}_2
+\abs{G_0^T\cdot\nabla\Lambda_{\kappa}^2\eta}_{7/2}
\\&\ls \norm{G_0^T\cdot\nabla\eta}_4+\norm{G_0}_4(\norm{ \eta}_4+\norm{ \eta^{\kappa}}_4)
+\abs{\Lambda_{\kappa}^2(G_0^T\cdot\nabla\eta)}_{7/2}+\abs{[G_0^T\cdot\nabla, \Lambda_{\kappa}^2]\eta}_{7/2}
\\&\ls \norm{G_0\cdot\nabla\eta}_4+\norm{G_0}_4\norm{\eta}_4+\abs{G_0^T\cdot\nabla\eta}_{7/2}
+\abs{G_0}_{7/2}\abs{\eta}_{7/2} \\
&\ls \norm{G_0^T\cdot\nabla\eta}_4+\norm{G_0}_4\norm{\eta}_4.
\end{split}
\end{equation*}
Here we have used the estimates \eqref{es1-1/2} to estimate
\begin{equation*}
\begin{split}
 \abs{[G_0^T\cdot\nabla, \Lambda_{\kappa}^2]\eta}_{7/2}&\le \abs{[\Lambda_{\kappa}^2,G_{0\alpha i}]\pa_\alpha\eta}_{1/2}+\abs{[\Lambda_{\kappa}^2,G_{0\alpha i}]\pa_\alpha \bp^3\eta}_{1/2}
 +\abs{\left[\bp^3,[\Lambda_{\kappa}^2,G_{0\alpha i}]\pa_\alpha \right]\eta}_{1/2}
\\&\ls   \abs{G_0}_{W^{1,\infty}}\abs{\eta}_{7/2}
+\abs{G_0}_{7/2}\abs{\eta}_{7/2}\ls \abs{G_0}_{7/2}\abs{\eta}_{7/2}.
\end{split}
\end{equation*}
This proves \eqref{tes2}.

We now turn to prove \eqref{fest1}. By the boundary condition in \eqref{etaaa} and the elliptic theory, we obtain, using the identity \eqref{partialF}, the a priori assumption \eqref{inin3} and the estimates \eqref{tes1},
\begin{equation*}
\begin{split}
\abs{\fk}_{7/2}&\ls\abs{\Delta_{*}\eta_{j}\a^{\kappa}_{j\alpha}\partial_{\alpha}{\Lambda_{\kappa}^2 v}-\Delta_{*}{\Lambda_{\kappa}^2\eta}_{j}\a^{\kappa}_{j\alpha}\partial_{\alpha} v}_{3/2}
\ls\norm{\Delta_{*}\eta_{j}\a^{\kappa}_{j\alpha}\partial_{\alpha}{\Lambda_{\kappa}^2 v}-\Delta_{*}{\Lambda_{\kappa}^2\eta}_{j}\a^{\kappa}_{j\alpha}\partial_{\alpha} v}_{2}
\\&\ls\norm{ \eta}_4\norm{\a^{\kappa}}_{2}\norm{v}_{3}
\leq P(\norm{\eta}_4,\norm{v}_3).
\end{split}
\end{equation*}
This proves \eqref{fest1} by using further the elliptic theory and the trace theorem.

Finally, the estimate \eqref{tes0} can be obtained similarly as \eqref{tes1} by applying $\partial_t$ to \eqref{etadef} and then using the equation $\partial_t\eta =v+\fk$ and the estimate \eqref{fest1}.
The estimates \eqref{fest22} and \eqref{fest2} could be achieved similarly as \eqref{tes2} and \eqref{tes0} by applying $G_0^T\cdot\nabla$ and $\partial_t$ to \eqref{etaaa} and using the estimates \eqref{tes1}--\eqref{fest1}. This concludes the lemma.
\end{proof}

\subsubsection{Transport estimates of  $\eta$}

The transport estimate of $\eta$ is recorded as follows.
\begin{proposition}
For $t\in [0,T]$ with $T\le T_\kappa$, it holds that
\begin{equation}\label{etaest}
\norm{\eta(t)}_4^2\leq M_0+TP\left(\sup_{t\in[0,T]} \mathfrak{E}^{\kappa}(t)\right).
\end{equation}
\end{proposition}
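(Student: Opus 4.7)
The plan is to exploit the simple transport-type structure $\partial_t\eta = v + \psi^\kappa$ from the first equation of \eqref{approximate}. Since no spatial derivatives of $\eta$ appear on the right-hand side, there is no loss of derivatives, and a direct energy argument in $H^4(\Omega)$ together with Gronwall-style time integration will close the estimate. No curl/divergence decomposition, no Alinhac unknowns, and no boundary analysis are needed at this step — this is the ``easy'' transport piece of the a priori scheme.

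Concretely, I would first differentiate $\tfrac12\norm{\eta}_4^2$ in time, using $\partial_t\eta = v+\fk$, to obtain
\begin{equation*}
\dtt \tfrac{1}{2}\norm{\eta}_4^2 \;=\; \sum_{|\alpha|\le 4}\int_\Omega D^\alpha\eta \cdot D^\alpha(v+\fk)\;\ls\;\norm{\eta}_4\bigl(\norm{v}_4+\norm{\fk}_4\bigr)
\end{equation*}
by Cauchy-Schwarz. The term $\norm{v}_4$ is directly controlled by $\sqrt{\mathfrak{E}^\kappa}$ from the definition of the energy functional. For the modification term, I would invoke the already-established estimate \eqref{fest1} of Lemma \ref{preest}, namely $\norm{\fk}_4\le P(\norm{\eta}_4,\norm{v}_3)$, and then bound $\norm{\eta}_4,\norm{v}_3\le \sqrt{\mathfrak{E}^\kappa}$ to conclude
\begin{equation*}
\dtt \tfrac{1}{2}\norm{\eta}_4^2 \;\le\; P\bigl(\mathfrak{E}^\kappa(t)\bigr).
\end{equation*}

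Integrating this inequality from $0$ to $t\in[0,T]$ and taking the supremum of $\mathfrak{E}^\kappa$ on the right, I obtain
\begin{equation*}
\norm{\eta(t)}_4^2 \;\le\; \norm{\eta(0)}_4^2 + T\,P\Bigl(\sup_{t\in[0,T]}\mathfrak{E}^\kappa(t)\Bigr).
\end{equation*}
Since $\eta(0)=\mathrm{Id}$, the quantity $\norm{\eta(0)}_4^2$ is a fixed constant depending only on the geometry of $\Omega$, which can be absorbed into the generic polynomial $M_0=P(\norm{v_0}_4^2+\norm{G_0}_4^2)$ by enlarging its constant term. This yields \eqref{etaest}.

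There is no real obstacle in this proposition — the main work has already been done in Lemma \ref{preest} where the modification term $\fk$ was controlled. The only thing to watch is that one uses the $\norm{v}_3$ (not $\norm{v}_4$) appearing in \eqref{fest1}, so no derivative slack is consumed, and that the constant $\norm{\mathrm{Id}}_4^2$ is recognized as harmless. The bound is \emph{linear} in $T$ (rather than exponential), which is exactly what is needed to propagate a bootstrap argument controlling $\mathfrak{E}^\kappa$ by $2M_0$ on a $\kappa$-independent time interval.
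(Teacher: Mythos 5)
Your proof is correct and follows essentially the same route as the paper: the paper's proof is exactly this one-liner, invoking $\partial_t\eta = v + \fk$ and the bound \eqref{fest1} on $\fk$, then integrating in time. Your write-up simply expands the details (Cauchy-Schwarz on the $H^4$ pairing, absorbing $\norm{\mathrm{Id}}_4^2$ into $M_0$) that the paper leaves implicit.
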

\begin{proof}
It follows by using  $\partial_t\eta =v+\fk$ and the estimate \eqref{fest1}.
\end{proof}

\subsubsection{Pressure estimate}\label{pressure1}

In the estimates in the later sections, one needs to estimate the pressure $q$. For this, applying $J^{\kappa}\divak$ to the second equation in \eqref{approximate}, by the third equation and the Piola indentity \eqref{polia}, one gets
\begin{equation}\label{ell}
-\Div(E\nabla q )=G:=G^1+G_0^T\cdot\nabla G^2,
\end{equation}
where the matrix $E=J^{\kappa}(\a^{\kappa})^T\a^{\kappa}$ and the function $G$ is given by
\begin{align}
&G^1=J^{\kappa}\partial_t\a^{\kappa}_{ij}\partial_j v_i+\left[ J^{\kappa}\a^{\kappa}_{ij}\partial_j,G_0^T\cdot\nabla\right]G_0^T\cdot\nabla\eta_i,
 \\& G^2= J^{\kappa}\a^{\kappa}_{ij}\partial_j(G_0^T\cdot\nabla\eta_i).
\end{align}
Note that  by \eqref{inin3} the matrix $E$ is symmetric and positive.

We shall now prove the estimate for the pressure $q$.
\begin{proposition}\label{pressure}
The following estimate holds:
\begin{equation}
\label{press}
\norm{q }_4^2+\norm{\partial_tq }_3^2\leq P\left( \mathfrak{E}^{\kappa} \right).
\end{equation}
\end{proposition}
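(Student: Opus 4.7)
The plan is to treat \eqref{ell} as a uniformly elliptic variable-coefficient Dirichlet problem for $q$ and apply standard elliptic regularity. By Lemma \ref{preest} we have $\norm{\eta^\kappa}_4 \leq P(\norm{\eta}_4)$, so $\mathcal{F}^\kappa = \nabla\eta^\kappa$, $\a^\kappa$ and $J^\kappa$ all lie in $H^3$ with norms controlled by $P(\mathfrak{E}^\kappa)$; combined with the a priori bound \eqref{inin3}, the symmetric matrix $E = J^\kappa(\a^\kappa)^T\a^\kappa$ is uniformly positive definite. Since $q = 0$ on $\Gamma$, the standard $H^4$ elliptic regularity estimate for this Dirichlet problem yields
$$
\norm{q}_4 \lesssim P(\norm{\eta}_4)\,\norm{G}_2,
$$
and it therefore suffices to bound $\norm{G}_2 \leq P(\mathfrak{E}^\kappa)$.

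For $G^1 = J^\kappa\partial_t\a^\kappa_{ij}\partial_j v_i + [J^\kappa\a^\kappa_{ij}\partial_j,\, G_0^T\cdot\nabla]G_0^T\cdot\nabla\eta_i$, the differentiation formula \eqref{partialF} gives $\partial_t\a^\kappa = -\a^\kappa\nabla(\partial_t\eta^\kappa)\a^\kappa$, and \eqref{tes0} yields $\norm{\partial_t\eta^\kappa}_4 \leq P(\mathfrak{E}^\kappa)$, hence $\norm{\partial_t\a^\kappa}_3 \leq P(\mathfrak{E}^\kappa)$; the product estimate \eqref{co0} then puts the first piece of $G^1$ in $H^2$. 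The commutator is a first-order operator whose coefficients involve one spatial derivative of $J^\kappa\a^\kappa \in H^3$ and of $G_0 \in H^4$; applied to $G_0^T\cdot\nabla\eta \in H^4$, another use of \eqref{co0} gives an $H^2$ bound. For $G_0^T\cdot\nabla G^2$: since $G^2 = J^\kappa\a^\kappa_{ij}\partial_j(G_0^T\cdot\nabla\eta_i)$ is a product of $J^\kappa\a^\kappa \in H^3$ with $\partial(G_0^T\cdot\nabla\eta) \in H^3$, we have $G^2 \in H^3$ by the $H^3$ algebra property in three dimensions; contracting one more spatial derivative against $G_0$ then puts $G_0^T\cdot\nabla G^2$ in $H^2$. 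Summing these three bounds gives $\norm{G}_2 \leq P(\mathfrak{E}^\kappa)$, which proves $\norm{q}_4^2 \leq P(\mathfrak{E}^\kappa)$.

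For the $\partial_t q$ estimate, I would differentiate \eqref{ell} in time to obtain
$$
-\Div(E\nabla \partial_t q) = \partial_t G + \Div\!\left((\partial_t E)\nabla q\right),\qquad \partial_t q\rvert_\Gamma = 0,
$$
and apply $H^3$ elliptic regularity to get
$$
\norm{\partial_t q}_3 \lesssim P(\norm{\eta}_4)\left(\norm{\partial_t G}_1 + \norm{(\partial_t E)\nabla q}_2\right).
$$
The mixed term is immediate: $\partial_t E$ is built from $\partial_t\a^\kappa,\,\partial_t J^\kappa \in H^3$, so $\norm{\partial_t E}_2 \leq P(\mathfrak{E}^\kappa)$, and combined with the just-established bound on $\norm{q}_4$ this yields $\norm{(\partial_t E)\nabla q}_2 \leq P(\mathfrak{E}^\kappa)$. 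The main obstacle is $\norm{\partial_t G}_1$, which brings in $\partial_t^2\a^\kappa$ and hence $\partial_t^2\eta^\kappa$. I would resolve this apparent circularity by reading the momentum equation in \eqref{approximate} as $\partial_t v = -\nabla_{\a^\kappa}q + \partial_m(\partial_n\eta^i G_{0n\ell}G_{0m\ell})$: the $H^4$ bound on $q$ together with $\mathfrak{E}^\kappa$ gives $\norm{\partial_t v}_3 \leq P(\mathfrak{E}^\kappa)$, while \eqref{fest2} gives $\norm{\partial_t\fk}_4 \leq P(\mathfrak{E}^\kappa)$; differentiating \eqref{etadef} in time and applying elliptic regularity to the resulting problem for $\partial_t^2\eta^\kappa$ then produces $\norm{\partial_t^2\eta^\kappa}_3 \leq P(\mathfrak{E}^\kappa)$, hence $\norm{\partial_t^2\a^\kappa}_2 \leq P(\mathfrak{E}^\kappa)$ via \eqref{partialF}. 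The remaining time derivatives in $\partial_t G$---of the product term, of the commutator, and of $G_0^T\cdot\nabla G^2$---are then controlled in $H^1$ by repeated applications of \eqref{co0}, which completes the proof.
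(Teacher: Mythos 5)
Your argument is correct but takes a genuinely different route from the paper's. The paper never estimates $G = G^1 + G_0^T\cdot\nabla G^2$ directly in $H^2$: instead it works in the weak formulation, multiplies \eqref{ell} by $q$, and uses $\diverge G_0^T = 0$ in $\Omega$ together with $G_0^T\cdot N = 0$ on $\Gamma$ to integrate the term $\int_\Omega (G_0^T\cdot\nabla G^2)\, q$ by parts into $-\int_\Omega G^2\, G_0^T\cdot\nabla q$, so that it only ever needs $G^1, G^2 \in L^2$; it then applies $\bar\partial^\ell$ ($\ell = 1,2,3$), repeats the same integration-by-parts trick with $\bar\partial^\ell q$, and recovers normal derivatives from the equation \eqref{np} by induction. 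This is, in effect, a from-scratch proof of exactly the elliptic regularity estimate $\norm{q}_4 \lesssim P(\norm{E}_3)\norm{G}_2$ that you cite as a black box. Your route is shorter because it relies on the citation plus a direct bound on $\norm{G}_2$; what the paper's route buys is that it never needs the product estimate $H^4 \cdot H^2 \hookrightarrow H^2$, since the derivative $G_0^T\cdot\nabla$ is always removed by duality before estimation. Your resolution of the apparent circularity in the $\partial_t q$ estimate (reading $\partial_t v$ off the momentum equation once $\norm{q}_4$ is known, then propagating to $\partial_t^2\eta^\kappa$ and $\partial_t^2\a^\kappa$) is the same mechanism the paper uses via \eqref{vt}.

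One minor technical slip worth fixing: several of your product bounds — in particular $\norm{G_0\nabla G^2}_2 \lesssim \norm{G_0}_4\norm{G^2}_3$ and the bound on the coefficient $G_0\,\partial(J^\kappa\a^\kappa) \in H^2$ inside the commutator — are attributed to \eqref{co0}, but the symmetric Moser estimate \eqref{co0} with $k=2$ would demand $\norm{G_0}_2\norm{\nabla G^2}_3 + \norm{G_0}_3\norm{\nabla G^2}_2$ and hence an uncontrolled $\norm{G^2}_4$. What you actually need is the asymmetric (low–high) product estimate $\norm{gh}_2 \lesssim \norm{g}_{W^{2,\infty}}\norm{h}_2$ together with the Sobolev embedding $H^4(\Omega) \hookrightarrow W^{2,\infty}(\Omega)$ in three dimensions. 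The conclusion $\norm{G}_2 \leq P(\mathfrak{E}^\kappa)$ is correct; only the cited lemma is wrong.
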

\begin{proof}
Multiplying \eqref{ell} by $q$ and then integrating by parts over $\Omega$, since $q=0$ and $G_0$ satisfies \eqref{bcond}, one obtains
\begin{equation}
\int_\Omega E\nabla q \cdot \nabla q = \int_\Omega G^1 q -\int_\Omega G^2  G_0^T\cdot\nabla q.
\end{equation}
Then we have
\begin{equation}
\norm{\nabla q}^2_0\ls \norm{G_0}_4\left(\norm{G^1}_{0}+\norm{G^2}_{0}\right)\norm{q}_{1}\leq \hal \norm{q}_1^2+ C\left(\norm{G^1}_{0}^2+\norm{G^2}_{0}^2\right)\norm{G_0}_4^2.
\end{equation}
By the Poincar\'e inequality $\norm{q}_1^2\ls \norm{\nabla q}_0^2$, one can get
\begin{equation} \label{eeeq}
\begin{split}
\norm{  q}^2_1 &\ls \norm{G_0}_4^2(\norm{ G^1}_{0}^2+\norm{ G^2}_{0}^2)
\\&\le P(\norm{G_0}_4^2, \norm{Dv}_0^2, \norm{D\partial_t\eta^{\kappa}}_0^2, \norm{D(G_0^T\cdot\nabla\eta)}_0^2, \norm{D\eta^{\kappa}}_0^2, \norm{J^{\kappa}\ak}_{L^{\infty}}^2)\leq P\left(\mathfrak{E}^{\kappa} \right).
\end{split}
 \end{equation}

Next, applying $\bar\partial^\ell $ with $\ell=1,2,3$ to the equation \eqref{ell} leads to
\begin{equation}
-\Div(E\nabla \bar\partial^\ell q )= \bar\partial^\ell G^1+G_0^T\cdot\nabla \bar\partial^\ell G^2+\left[\bar\partial^\ell, G_0^T\cdot\nabla \right]G^2
+\Div \left[\bar\partial^\ell, E\nabla \right]q.
\end{equation}
Then as for \eqref{eeeq}, we obtain
\begin{equation}\label{dddddddd}
\norm{ \bar\partial^\ell q}_{1}^2
\ls \norm{ \bar\partial^\ell G^1}_{0}^2+\norm{ \bar\partial^\ell G^2}_{0}^2
+ \norm{\left[\bar\partial^{\alpha},G_0^T\cdot\nabla\right]G_2}_{0}^2+\norm{\left[\bar\partial^{\alpha},E\nabla\right] q}_{0}^2.
\end{equation}
We then estimate the right hand side of \eqref{dddddddd}. By the estimates \eqref{tes1}--\eqref{tes0}, we may estimate
\begin{equation}\label{qqqq0}
\begin{split}
&\norm{\bar\partial^\ell G^1}_{0}^2+\norm{G_0^T\cdot\nabla \bar\partial^\ell G^2}_{0}^2+\norm{\left[\bar\partial^\ell, G_0^T\cdot\nabla \right]G^2}_{0}^2\\&\le P\left(\norm{\bar\partial^\ell \partial_t\eta^{\kappa}}_1,\norm{\bar\partial^\ell v}_1, \norm{\bar\partial^\ell \eta^{\kappa}}_1, \norm{\bar\partial^\ell (G_0^T\cdot\nabla\eta^{\kappa})}_1,\norm{\bar\partial^\ell (G_0^T\cdot\nabla\eta)}_1,\norm{G_0}_4,\norm{\eta^{\kappa}}_4\right)\\&\leq P\left( \mathfrak{E}^{\kappa} \right).
\end{split}
\end{equation}
For the last commutator term, by the H\"older inequality, we estimate for each $\ell=1,2,3,$
\begin{align}\label{qqqq1}
&\norm{[\bar\partial,E\nabla] q}_{0}\ls\norm{\bar\partial E}_{L^{\infty} }\norm{\nabla q}_{0}\ls\norm{  E}_{3}\norm{\nabla q}_{0},\\
\label{qqqq2}&\norm{[\bar\partial^2,E\nabla] q}_{0}\ls \norm{\bar\partial E}_{L^{\infty} }\norm{\bar\partial\nabla q}_{0}+\norm{\bar\partial^2E}_{L^3 }\norm{\nabla q}_{L^6 }\ls\norm{  E}_{3}\norm{\nabla q}_{1},\\
\label{qqqq3}&\norm{[\bar\partial^3,E\nabla] q}_{0}\ls\norm{\bar\partial E}_{L^{\infty} }\norm{\bar\partial^2\nabla q}_{0}+\norm{\bar\partial^2E}_{L^3 }\norm{\nabla q}_{L^6 }+\norm{\bar\partial^3 E}_{0}\norm{\nabla q}_{L^{\infty} }\ls\norm{  E}_{3}\norm{\nabla q}_{2}.
\end{align}
By \eqref{partialF}, \eqref{inin3} and \eqref{tes1} imply $\norm{E}_3\leq P(\norm{\eta}_4)$, plugging the estimates \eqref{qqqq0}--\eqref{qqqq3} into \eqref{dddddddd}, we obtain
\begin{equation}
\label{q1}
\norm{ \bar\partial^\ell q}_{1}^2  \leq  P\left( \mathfrak{E}^{\kappa} \right)\left(1+\norm{\nabla q}_{\ell-1}^2\right).
\end{equation}
On the other hand, the equation \eqref{ell} gives
\begin{equation}
\label{np}
-\partial_{33}q=\dfrac{1}{E_{33}}\left(G+\sum_{i+j\neq6}\partial_i(E_{ij}\partial_jq)+\partial_3E_{33}\partial_3q\right).
\end{equation}

This implies that we can estimate the normal derivatives of $q$ in terms of those $q$ terms with less normal derivatives. Hence, using the equation \eqref{np} and the estimates \eqref{eeeq} and \eqref{q1}, inductively on $\ell$, we obtain
\begin{equation}
\label{3q}
\norm{  q}^2_4\leq P\left( \mathfrak{E}^{\kappa} \right).
\end{equation}

We now estimate $\partial_t q$. Applying $\partial_t$ to the equation \eqref{ell} leads to
\begin{equation}
-\Div(E\nabla \partial_t q)=\partial_tG_1+G_0^T\cdot\nabla\partial_tG_2+\Div(\partial_tE\nabla q).
\end{equation}
By arguing similarly as for \eqref{3q}, we can obtain
\begin{equation}\label{3qt}
\norm{ \dt q}^2_3\leq P\left( \mathfrak{E}^{\kappa} \right).
\end{equation}
Here we have used the estimates \eqref{tes1}--\eqref{fest2} and noted that by using the second equation in \eqref{approximate} and the estimates \eqref{3q}:
\begin{equation}
\label{vt}
\norm{\partial_tv}_3^2=\norm{-\nabak q+(G_0^T\cdot \nabla)^2 \eta}_3^2 \leq P\left( \mathfrak{E}^{\kappa} \right).
\end{equation}
Consequently, the estimates \eqref{3q} and \eqref{3qt} give \eqref{press}.
\end{proof}

\subsubsection{Tangential energy estimates}\label{tan}

We start with the basic $L^2$ energy estimates.
\begin{proposition}\label{basic}
For $t\in [0,T]$ with $T\le T_\kappa$, it holds that
\begin{equation}\label{00estimate}
\norm{v(t)}_0^2+\norm{(G_0^T\cdot\nabla\eta)(t)}_0^2\leq M_0+TP\left(\sup_{t\in[0,T]}\mathfrak{E}^{\kappa}(t)\right).
\end{equation}
\end{proposition}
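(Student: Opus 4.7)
The plan is to multiply the momentum equation in \eqref{approximate} by $v$ and integrate over $\Omega$, giving the basic energy identity
\begin{equation*}
\hal\dtt \ns{v} = -\int_\Omega v_i \a^\kappa_{ij}\pa_j q + \int_\Omega v_i \pa_m\bigl(\pa_n\eta^i G_{0n\ell}G_{0m\ell}\bigr).
\end{equation*}
For the pressure integral, I would integrate by parts to move $\pa_j$ off $q$; the boundary contribution vanishes since $q|_\Gamma=0$, the interior piece $\a^\kappa_{ij}\pa_j v_i\,q$ vanishes by $\Div_{\a^\kappa}v=0$, and only a remainder of the form $\int_\Omega (\pa_j\a^\kappa_{ij})\,v_i\,q$ is left. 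This remainder carries no derivative on $v$ or $q$, so it is bounded by $\norm{\a^\kappa}_{W^{1,\infty}}\norm{v}_0\norm{q}_0$, which is controlled by $P(\mathfrak{E}^\kappa)$ using Lemma \ref{preest} and the pressure estimate \eqref{press} of Proposition \ref{pressure}.

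The real work is in the force integral, where my idea is to integrate by parts in $\pa_m$ and then substitute $v=\pa_t\eta-\fk$ from the first equation of \eqref{approximate}. The boundary integral produced is $\int_\Gamma v_i\pa_n\eta^i G_{0n\ell}G_{0m\ell}N_m$, whose factor $G_{0m\ell}N_m$ vanishes thanks to $G_0^T\cdot N=0$ on $\Gamma$ in \eqref{bcond}. The bulk piece is then
\begin{equation*}
-\int_\Omega G_{0m\ell}\pa_m v_i\, G_{0n\ell}\pa_n\eta^i
= -\hal\dtt\ns{G_0^T\cdot\nabla\eta} + \int_\Omega G_{0m\ell}\pa_m\fk_i\,G_{0n\ell}\pa_n\eta^i,
\end{equation*}
the first term of which I would move to the left to combine with $\hal\dtt\ns{v}$. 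The $\fk$ remainder is bounded by $\norm{G_0^T\cdot\nabla\fk}_0\,\norm{G_0^T\cdot\nabla\eta}_0\le P(\mathfrak{E}^\kappa)$ thanks to \eqref{fest22}.

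Combining the pieces produces the differential inequality $\dtt\bigl(\ns{v}+\ns{G_0^T\cdot\nabla\eta}\bigr)\le P(\mathfrak{E}^\kappa)$, and integrating from $0$ to $t\le T$ gives the stated bound, with the initial contribution absorbed into $M_0$ since $\eta|_{t=0}=\text{Id}$ forces $(G_0^T\cdot\nabla\eta)|_{t=0}=G_0^T$, bounded in $L^2$ by $\norm{G_0}_0$. I do not anticipate any serious obstacle here: the argument is a clean use of the Lagrangian structure together with the two boundary constraints $q|_\Gamma=0$ and $G_0^T\cdot N|_\Gamma=0$. The only mildly delicate point is that \eqref{approximate} does not provide the Piola identity for $\a^\kappa$ itself (only for $J^\kappa\a^\kappa$), but at the $L^2$ level this costs nothing more than the lower-order remainder isolated above, which is already absorbed into $P(\mathfrak{E}^\kappa)$.
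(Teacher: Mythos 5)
Your proposal is correct and follows essentially the same route as the paper: testing the momentum equation against $v$, integrating the pressure term by parts with $q|_\Gamma=0$ and $\Div_{\a^\kappa}v=0$ leaving only the $(\pa_j\a^\kappa_{ij})v_i q$ remainder, and integrating the force term by parts using $G_0^T\cdot N|_\Gamma=0$ followed by the substitution $v=\pa_t\eta-\fk$ to produce the exact time derivative $\tfrac{1}{2}\dtt\ns{G_0^T\cdot\nabla\eta}$. The only cosmetic difference is that you invoke \eqref{fest22} for the $\fk$ remainder where the paper cites \eqref{fest1}; both suffice.
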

\begin{proof}
Taking the $L^2(\Omega)$ inner product of the second equation in \eqref{approximate} with $v$ yields
\begin{equation}\label{hhl1}
 \dfrac{1}{2}\dfrac{d}{dt}\int_{\Omega}\abs{v}^2 +\int_{\Omega} \nak q\cdot v -\int_{\Omega}\partial_\ell (\partial_m\eta^iG_{0m k}G_{0\ell k}) \cdot v^i =0.
\end{equation}
By the integration by parts and using the third equation and the boundary condition $q=0$ on $\Gamma$, using the pressure estimates \eqref{press}, we have
\begin{equation}
-\int_{\Omega} \nak q\cdot v=\int_{\Omega}\partial_j\a_{ij} q v_i\le\norm{D \a^\kappa}_{L^{\infty}}\norm{v}_0\norm{q}_0\le P\left(\sup_{t\in[0,T]}\mathfrak{E}^{\kappa}(t)\right).
\end{equation}
Since $G_0$ satisfies \eqref{bcond}, by the integration by parts and using $\dt \eta=v+\fk$, we obtain
\begin{equation}\label{hhl3}
\begin{split}
-\int_{\Omega}\partial_m(\partial_n\eta^iG_{0n\ell}G_{0m\ell}) \cdot v^i
&=\int_{\Omega}\partial_m\eta^iG_{0m k}G_{0\ell k}\partial_\ell v^i\\
&=\int_{\Omega}\partial_m\eta^iG_{0mk}G_{0\ell k}\partial_\ell\partial_t\eta^i\,dx-\int_{\Omega}\partial_m\eta^iG_{0mk}G_{0\ell k}\partial_\ell \psi^\kappa_i
\\&=\dfrac{1}{2}\dfrac{d}{dt}\int_{\Omega}\abs{G_0^T\cdot\nabla\eta}^2\,dx-\int_{\Omega}\partial_m\eta^iG_{0mk}G_{0\ell k}\partial_\ell \psi^\kappa_i
\end{split}
\end{equation}
Then \eqref{hhl1}--\eqref{hhl3} implies, using the estimates \eqref{fest1},
\begin{equation}
\dfrac{d}{dt}\int_{\Omega}\abs{v}^2+\abs{G_0^T\cdot\nabla\eta}^2 \leq P\left(\sup_{t\in[0,T]}\mathfrak E^{\kappa}(t)\right).
\end{equation}
Integrating directly in time of the above yields \eqref{00estimate}.
\end{proof}
In order to perform higher order tangential energy estimates, one needs to compute the equations satisfied by $(\bp^4 v, \bp^4 q, \bp^4 \eta)$, which requires to commutate $\bp^4$ with each term of $\pa^\ak_i$. It is thus useful to establish the following general expressions and estimates for commutators.
It turns out that it is a bit more convenient to consider the equivalent differential operator $\bp^2\Delta_\ast$ so that we can employ the structure of $\Delta_\ast \fk$ on $\Gamma$. For $i=1,2,3,$ we have
\begin{equation}
 \bp^2\Delta_\ast (\pa^\ak_if) =  \pa^\ak_i \bar\partial^2\Delta_* f + \bp^2\Delta_* {\a^{\kappa}_{ij}} \pa_j f+\left[\bar\partial^2\Delta_*, {\a^{\kappa}_{ij}} ,\pa_j f\right].
\end{equation}
By the identity \eqref{partialF}, we have that
\begin{equation}
\begin{split}
&\bp^2\Delta_* (\a^\kappa_{ij} )\pa_j f=-\bp\Delta_*(\a^\kappa_{i\ell}\bp\pa_\ell  \eta^{\kappa}_m  \a^\kappa_{mj})\pa_j f
\\&\quad=-\a^\kappa_{i\ell}\pa_\ell \bp^2\Delta_*\eta^{\kappa}_m\a^\kappa_{mj}\pa_j f
-\left[\bp\Delta_*, \a^\kappa_{i\ell}\a^\kappa_{mj} \right]\bp \pa_\ell  \eta^{\kappa}_m \pa_j f
\\&\quad=-\pa^\ak_i( \bp^2\Delta_*\eta^{\kappa}\cdot\nak f)+\bp^2\Delta_*\eta^{\kappa}\cdot\nak( \pa^\ak_i f)
-\left[\bp\Delta_*, \a^\kappa_{i\ell}\a^\kappa_{mj}\right]\bp\pa_\ell \eta^{\kappa}_m\pa_j f.
\end{split}
\end{equation}
It then holds that
\begin{equation}\label{commf}
 \bp^2\Delta_\ast (\pa^\ak_if) =  \pa^\ak_i\left(\bar\partial^2\Delta_* f- \bp^2\Delta_*\eta^{\kappa}\cdot\nak f\right)+ \mathcal{C}_i(f).
\end{equation}
where the commutator $\mathcal{C}_i(f)$ is given by
\begin{equation}
\mathcal{C}_i(f)=\left[\bar\partial^2\Delta_*, {\a^{\kappa}_{ij}} ,\pa_j f\right]+\bp^2\Delta_*\eta^{\kappa}\cdot\nak( \pa^\ak_i f)
-\left[\bp\Delta_*, \a^\kappa_{i\ell}\a^\kappa_{mj}\right]\bp\pa_\ell \eta^{\kappa}_m\pa_j f
\end{equation}
It was first observed by Alinhac \cite{Alinhac} that the highest order term of $\eta$ will be cancelled when one uses the good unknown $\bar\partial^2\Delta_* f- \bp^2\Delta_*\eta^{\kappa}\cdot\nak f$, which allows one to perform high order energy estimates.

The following lemma deals with the estimates of the commutator $\mathcal C_i(f)$.
\begin{lemma}
The following estimate holds:
\begin{equation}\label{comest}
\norm{\mathcal C_i (f)}_0\leq  P(\norm{\eta}_4) \norm{f}_4.
\end{equation}
\end{lemma}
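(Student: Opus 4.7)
The plan is to decompose $\mathcal{C}_i(f)$ into its three constituent pieces and bound each in $L^2(\Omega)$ separately. Throughout I will use the preliminary bound $\|\eta^\kappa\|_4 \lesssim \|\eta\|_4$ from \eqref{tes1}, combined with the identity \eqref{partialF} and the a priori smallness \eqref{inin3}, to conclude that $\|\mathcal{A}^\kappa\|_3 \leq P(\|\eta\|_4)$ and hence that $\mathcal{A}^\kappa$ together with its first two derivatives are controlled in $L^\infty$, $L^6$, $L^3$ by $P(\|\eta\|_4)$ through the Sobolev embeddings $H^2 \hookrightarrow L^\infty$ and $H^1 \hookrightarrow L^6$. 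A second key observation is that $\bar\partial^2\Delta_\ast$ and $\bar\partial\Delta_\ast$ are purely tangential operators of orders $4$ and $3$ respectively, so in any Leibniz expansion we never need more than three derivatives of $\mathcal{A}^\kappa$.

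For the first term $[\bar\partial^2\Delta_\ast,\mathcal{A}^\kappa_{ij},\partial_j f]$, I would apply the symmetric commutator estimate \eqref{co2} with $k=4$, $g=\mathcal{A}^\kappa_{ij}$, $h=\partial_j f$. The symmetric structure removes both pure-$g$ and pure-$h$ top-order contributions, so the surviving products in the Leibniz decomposition carry between one and three tangential derivatives on each factor, and may be estimated by H\"older with the Sobolev embeddings above to obtain a bound $\lesssim \|\mathcal{A}^\kappa\|_3\|f\|_4 \leq P(\|\eta\|_4)\|f\|_4$.

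For the second term $\bar\partial^2\Delta_\ast\eta^\kappa \cdot \nabla_{\mathcal{A}^\kappa}(\partial_i^{\mathcal{A}^\kappa}f)$, the plan is an $L^2$--$L^\infty$ H\"older split: bound $\|\bar\partial^2\Delta_\ast\eta^\kappa\|_0 \leq \|\eta^\kappa\|_4 \lesssim \|\eta\|_4$, and expand $\nabla_{\mathcal{A}^\kappa}(\partial_i^{\mathcal{A}^\kappa}f)$ using \eqref{partialF} together with the product estimate \eqref{co0}; each resulting factor can be placed in $L^\infty$ via $H^2 \hookrightarrow L^\infty$, yielding $\|\nabla_{\mathcal{A}^\kappa}(\partial_i^{\mathcal{A}^\kappa}f)\|_{L^\infty} \leq P(\|\mathcal{A}^\kappa\|_3)\|f\|_4 \leq P(\|\eta\|_4)\|f\|_4$.

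For the third term $[\bar\partial\Delta_\ast,\mathcal{A}^\kappa_{i\ell}\mathcal{A}^\kappa_{mj}]\,\bar\partial\partial_\ell\eta^\kappa_m\,\partial_j f$, I would apply the commutator estimate \eqref{co1} with $k=3$, $g=\mathcal{A}^\kappa_{i\ell}\mathcal{A}^\kappa_{mj}$, and $h = \bar\partial\partial_\ell\eta^\kappa_m\,\partial_j f$, obtaining $\lesssim \|g\|_3\|h\|_2 \leq P(\|\eta\|_4)\cdot \|\eta^\kappa\|_4\|f\|_4 \leq P(\|\eta\|_4)\|f\|_4$ after using the algebra property of $H^2$ in three dimensions to handle the product inside $h$. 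The main obstacle in the whole argument is really just the careful bookkeeping of derivative counts: one must verify in each piece that the top-order derivatives fall on $f$ (which we are free to carry at level $H^4$) rather than on $\mathcal{A}^\kappa$ (which is only controlled at the $H^3$ level through $\|\eta\|_4$). Summing the three contributions gives \eqref{comest}.
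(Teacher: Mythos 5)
Your treatment of the first two pieces of $\mathcal{C}_i(f)$ coincides exactly with the paper's: the symmetric commutator estimate \eqref{co2} applied to $[\bar\partial^2\Delta_*, \mathcal{A}^\kappa_{ij}, \partial_j f]$, and the $L^2$--$L^\infty$ H\"older split for the Alinhac correction $\bar\partial^2\Delta_*\eta^\kappa\cdot\nabla_{\mathcal{A}^\kappa}(\partial_i^{\mathcal{A}^\kappa}f)$. The difference is in the third piece. The paper reads
\[
\left[\bar\partial\Delta_*, \mathcal{A}^\kappa_{i\ell}\mathcal{A}^\kappa_{mj}\right]\bar\partial\partial_\ell\eta^\kappa_m\,\partial_j f
\quad\text{as}\quad
\Bigl(\left[\bar\partial\Delta_*, \mathcal{A}^\kappa_{i\ell}\mathcal{A}^\kappa_{mj}\right]\bar\partial\partial_\ell\eta^\kappa_m\Bigr)\partial_j f,
\]
which is the parenthesization that actually emerges from the derivation of $\mathcal{C}_i(f)$: the commutator acts on $\bar\partial\partial_\ell\eta^\kappa_m$ alone, and $\partial_j f$ is an external multiplier that is immediately peeled off in $L^\infty$, leaving a pure $\mathcal{A}^\kappa$--$\eta^\kappa$ commutator bounded by $P(\|\eta\|_4)$ and a factor $\|Df\|_{L^\infty}\lesssim\|f\|_3$. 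You instead absorb $\partial_j f$ into the commutator argument $h$, which is not the quantity defined in $\mathcal{C}_i(f)$; it happens to be controllable as well, but strictly speaking you are estimating a variant of the term.

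There is also a bookkeeping issue you should notice in your application of \eqref{co1}. With $k=3$ the stated inequality reads $\|Dg\|_2\|h\|_3 + \|Dg\|_3\|h\|_2$. With your choice $h = \bar\partial\partial_\ell\eta^\kappa_m\,\partial_j f$ the factor $\|h\|_3$ forces $\|\eta^\kappa\|_5$, and $\|Dg\|_3$ forces $\|\mathcal{A}^\kappa\|_4$, neither of which is available at the $H^4$ level. The clean bound $\|g\|_3\|h\|_2$ that you write down \emph{is} correct for a third-order commutator, since the worst Leibniz term carries at most three derivatives on $g$ or two on $h$, but it does not follow literally from the displayed form of \eqref{co1}; you should either justify the sharper third-order bound directly or adopt the paper's decomposition, which keeps $h$ at the level of $\bar\partial\partial_\ell\eta^\kappa$ and makes the derivative count transparent.
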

\begin{proof}
First, by the commutator estimates \eqref{co2} and the estimates \eqref{tes1}, we have
\begin{equation}\label{Calpha1}
\norm{[\bp^2\Delta_*, \a^{\kappa}_{ij}, \partial_j f]}_0\ls\norm{\a^{\kappa}}_{3}\norm{D f}_{3}\leq P(\norm{\eta}_4) \norm{f}_4.
\end{equation}
Next, by the estimates \eqref{tes1} again, we get
\begin{equation}
\norm{\bp^2\Delta_*\eta^{\kappa}\cdot\nak( \pa^\ak_i f)}_0\le \norm{\bp^4\eta^\kappa}_0\norm{\nak( \pa^\ak_i f)}_{L^{\infty} } \leq P(\norm{\eta}_4) \norm{f}_4.
\end{equation}
Finally, by the commutator estimates \eqref{co1} and the estimates \eqref{tes1}, we obtain
\begin{equation}\label{Calpha3}
\norm{\left[\bp\Delta_*, \a^\kappa_{i\ell}\a^\kappa_{mj}\right]\bp\pa_\ell \eta^{\kappa}_m\pa_j f}_0\le \norm{\left[\bp\Delta_*, \a^\kappa_{i\ell}\a^\kappa_{mj}\right]\bp\pa_\ell \eta^{\kappa}_m}_0\norm{Df}_{L^{\infty} }
 \leq P(\norm{\eta}_4) \norm{f}_3.
\end{equation}

Consequently, the estimate \eqref{comest} follows by collecting \eqref{Calpha1}--\eqref{Calpha3}.
\end{proof}

We now introduce the good unknowns
\begin{equation}
\label{gun}
\mathcal{V}=\bar\partial^2\Delta_* v- \bp^2\Delta_*\eta^{\kappa}\cdot\nak v,\quad \mathcal{Q}=\bar\partial^2\Delta_* q- \bp^2\Delta_*\eta^{\kappa}\cdot\nak q.
\end{equation}
Applying $\bar\partial^2\Delta_*$ to the second and third equations in \eqref{approximate}, by \eqref{commf}, one gets
\begin{equation}\label{eqValpha}
\begin{split}
& \mathcal{V}_t + \nak \mathcal{Q}-(G_0^T\cdot\nabla)\left(\bar\partial^2\Delta_*(G_0^T\cdot\nabla\eta)\right)\\&\quad =F:=  \dt\left(\bp^2\Delta_*\eta^{\kappa}\cdot\nak v\right) - \mathcal{C}_i(q) +\left[\bar\partial^2\Delta_*, G_0^T\cdot\nabla\right]G_0^T\cdot\nabla\eta \text{ in }\Omega,
\end{split}
\end{equation}
and
\begin{equation} \label{divValpha}
  \nabla_\ak\cdot \mathcal{V}=- \mathcal{C}_i(v_i) \text{ in }\Omega.
\end{equation}
Note that $q=0$ on $\Gamma$ implies,
\begin{equation}\label{qValpha}
\mathcal{Q}=- \bp^2\Delta_*\Lambda_{\kappa}^2\eta_i\a^\kappa_{i3}\pa_3 q \text{ on }\Gamma.
\end{equation}

We shall now derive the $\bp^4$-energy estimates.
\begin{proposition}\label{tane}
For $t\in [0,T]$ with $T\le T_\kappa$, it holds that
\begin{equation}
\label{teee}
\norm{\bar\partial^4 v(t)}_0^2+\norm{\bar\partial^4(G_0^T\cdot\nabla\eta)(t)}_0^2+\abs{\bar\partial^4 \Lambda_{\kappa}\eta_i \a^{\kappa}_{i3}(t)}_0^2\leq M_0+TP\left(\sup_{t\in[0,T]}\mathfrak{E}^{\kappa}(t)\right).
\end{equation}
\end{proposition}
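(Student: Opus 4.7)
The plan is to test the evolution equation \eqref{eqValpha} for the good unknown $\mathcal{V}$ against $\mathcal{V}$ in $L^2(\Omega)$, exploiting the Alinhac cancellation already built into the definition \eqref{gun}. This yields
\begin{equation*}
\tfrac12\tfrac{d}{dt}\|\mathcal{V}\|_0^2+\int_\Omega \nabla_{\a^\kappa}\mathcal{Q}\cdot\mathcal{V}-\int_\Omega(G_0^T\!\cdot\!\nabla)\bar\partial^2\Delta_\ast(G_0^T\!\cdot\!\nabla\eta)\cdot\mathcal{V}=\int_\Omega F\cdot\mathcal{V}.
\end{equation*}
For the pressure term I use the Piola identity \eqref{polia} to rewrite $J^\kappa\a^\kappa_{ij}\partial_j\mathcal{Q}=\partial_j(J^\kappa\a^\kappa_{ij}\mathcal{Q})$ and integrate by parts; the interior contribution pairs with \eqref{divValpha} to produce a $\mathcal{Q}\cdot\mathcal{C}_i(v_i)$ piece controlled through \eqref{comest}, \eqref{press} and \eqref{inin3}. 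For the elastic term, \eqref{bcond} eliminates the boundary trace when integrating $G_0^T\!\cdot\!\nabla$ by parts, and commuting past $\bar\partial^2\Delta_\ast$ together with $\partial_t\eta=v+\fk$ and the definition of $\mathcal{V}$ produces $\tfrac12\tfrac{d}{dt}\|\bar\partial^2\Delta_\ast(G_0^T\!\cdot\!\nabla\eta)\|_0^2$ plus residues bounded using \eqref{fest22}, \eqref{press} and the product/commutator estimates. The remainder $F$ is handled via Lemma~\ref{preest}, \eqref{comest}, \eqref{co0} and a direct bound on $[\bar\partial^2\Delta_\ast,G_0^T\!\cdot\!\nabla](G_0^T\!\cdot\!\nabla\eta)$ by $P(\|G_0\|_4,\|G_0^T\!\cdot\!\nabla\eta\|_4)$.

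The crucial surface integral arising from \eqref{qValpha} is
\begin{equation*}
\mathcal{I}_\partial=-\int_\Gamma J^\kappa N_3 \partial_3 q\,\a^\kappa_{i3}\a^\kappa_{j3}\bar\partial^2\Delta_\ast\La_\kappa^2\eta_j\,\mathcal{V}_i,
\end{equation*}
which splits as $\mathcal{I}_\partial|_{\Gamma_1}+\mathcal{I}_\partial|_{\Gamma_2}$ and is treated by entirely different mechanisms. \textbf{On $\Gamma_1$} I decompose $\La_\kappa^2=\La_\kappa\cdot\La_\kappa$, transfer one factor onto $\mathcal{V}_i$ using the self-adjointness and commutator estimates of Lemma~\ref{comm11}, and use $\partial_t\eta=v+\fk$ together with the definition of $\mathcal{V}$ to recognise $\a^\kappa_{i3}(\La_\kappa\mathcal{V})_i$ as $\partial_t(\a^\kappa_{i3}\bar\partial^2\Delta_\ast\La_\kappa\eta_i)$ up to lower-order residues absorbed by Lemma~\ref{preest}; the Rayleigh--Taylor sign \eqref{ini2} then converts $\mathcal{I}_\partial|_{\Gamma_1}$ into $\tfrac12\tfrac{d}{dt}\int_{\Gamma_1}(-N_3\partial_3 q)J^\kappa|\a^\kappa_{i3}\bar\partial^2\Delta_\ast\La_\kappa\eta_i|^2$ plus a $P(\mathfrak{E}^\kappa)$ contribution coming from time derivatives falling on the Rayleigh--Taylor coefficient, which is controlled by \eqref{press} and \eqref{vt}. \textbf{On $\Gamma_2$} the sign is not available, but Lemma~\ref{IG_0} supplies the half-derivative gain $|\bar\partial^4\eta|_{1/2}\lesssim\|G_0^T\!\cdot\!\nabla\eta\|_4$, so I peel one tangential $\bar\partial$ off $\mathcal{V}_i$ and estimate $\mathcal{I}_\partial|_{\Gamma_2}$ by the duality $|\bar\partial F|_{-1/2}\lesssim|F|_{1/2}$ recorded above, with the $W^{1,\infty}$ coefficient $N_3\partial_3 q\,\a^\kappa_{i3}\a^\kappa_{j3}J^\kappa$ handled by \eqref{co123}, \eqref{press} and \eqref{tes1}. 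Integrating in time and using the equivalence $\|\mathcal{V}\|_0^2=\|\bar\partial^4 v\|_0^2+\text{lower order}$ (absorbed by $P(\|\eta\|_4)\|v\|_4^2$) yields \eqref{teee}.

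The main obstacle is $\mathcal{I}_\partial|_{\Gamma_2}$: without a Rayleigh--Taylor sign the symmetrisation that succeeds on $\Gamma_1$ is unavailable, and a priori a half-derivative of $\bar\partial^4\eta$ appears that cannot be absorbed by the kinetic energy. The non-collinearity condition \eqref{taylor1}, converted via Lemma~\ref{IG_0} into the elliptic-type gain \eqref{G_0G}, is precisely what cashes this missing half-derivative against the elastic energy $\|G_0^T\!\cdot\!\nabla\eta\|_4$; the delicate step is to organise the $\La_\kappa$-commutators from Lemma~\ref{comm11} so that no derivative is lost when performing the $\Gamma_2$ duality pairing, and to verify that the $\fk$-induced residues generated when switching $\partial_t\eta=v+\fk$ on $\Gamma_1$ truly remain at polynomial order in $\mathfrak{E}^\kappa$ uniformly in $\kappa$.
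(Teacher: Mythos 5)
Your overall architecture mirrors the paper's proof: test the Alinhac good-unknown equation \eqref{eqValpha} against $\mathcal{V}$ in $L^2$, extract the boundary integral via \eqref{qValpha} and \eqref{divValpha}, symmetrise the $\Gamma_1$ piece through the Rayleigh--Taylor sign, handle the $\Gamma_2$ piece with the $(H^{-1/2},H^{1/2})$ duality together with the half-derivative gain of Lemma~\ref{IG_0}, and close the interior $F$, $\mathcal{R}$ and elastic pieces by the product/commutator lemmas. The $\Gamma_2$ treatment is essentially the paper's $I_1$, $I_2$ split, and your observation that the coefficient must be controlled in $W^{1,\infty}$ (so that \eqref{co123} applies) rather than merely $L^\infty$ is a useful refinement.

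However, there is a genuine gap on $\Gamma_1$. After transferring one $\Lambda_\kappa$ onto $\mathcal{V}$ and substituting $v=\partial_t\eta-\fk$, the identification of $\a^\kappa_{i3}(\Lambda_\kappa\mathcal{V})_i$ with $\partial_t\bigl(\a^\kappa_{i3}\bar\partial^2\Delta_*\Lambda_\kappa\eta_i\bigr)$ produces, among the residues, the two boundary integrals (the paper's $\mathcal{I}_{2b}$ and $\mathcal{I}_{4b}$)
\begin{equation*}
\int_{\Gamma_1}(-\nabla q\cdot N)\,\a^\kappa_{j3}\bar\partial^2\Delta_*\Lambda_\kappa\eta_j\,\a^{\kappa}_{i\alpha}\pa_\alpha\Lambda_{\kappa}^2 v_m\,\a^\kappa_{m3}\,\bar\partial^2\Delta_*\Lambda_\kappa\eta_i
\quad\text{and}\quad
\int_{\Gamma_1}(\nabla q\cdot N)\,\a^\kappa_{j3}\bar\partial^2\Delta_*\Lambda_\kappa^2\eta_j\,\a^{\kappa}_{i\alpha}\pa_{\alpha} v_m\,\a^{\kappa}_{m3}\,\bar\partial^2\Delta_*\Lambda_\kappa^2\eta_i.
\end{equation*}
These are \emph{not} lower order and are \emph{not} absorbed by Lemma~\ref{preest}: each contains the tangential contraction $\a^\kappa_{i\alpha}\bar\partial^2\Delta_*\Lambda_\kappa\eta_i$, $\alpha\in\{1,2\}$, which the boundary energy $\abs{\a^\kappa_{i3}\bar\partial^4\Lambda_\kappa\eta_i}^2_{L^2(\Gamma_1)}$ does not control, and the only available $L^2$ bound on $\bar\partial^4\Lambda_\kappa\eta$ is the $\kappa$-dependent $\kappa^{-1/2}\abs{\eta}_{7/2}$ from \eqref{loss}. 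The paper does not estimate them; it cancels them exactly against the $\fk$-generated term $\mathcal{I}_5$. By the construction \eqref{etaaa}, on $\Gamma$ one has $\bar\partial^2\Delta_*\fk = \Delta_*\eta_m\a^\kappa_{m\alpha}\pa_\alpha\Lambda_\kappa^2 v - \Delta_*\Lambda_\kappa^2\eta_m\a^\kappa_{m\alpha}\pa_\alpha v$ plus benign commutators, and the resulting boundary integral reproduces $-\mathcal{I}_{2b}-\mathcal{I}_{4b}$ up to terms that genuinely are $P(\mathfrak E^\kappa)$. This algebraic cancellation is the raison d'\^etre of $\fk$. Your phrasing has the logic inverted: the $\fk$-induced term is not a residue to be bounded, it is the device that neutralises the two terms that cannot be bounded $\kappa$-uniformly, and without spelling out this cancellation the $\Gamma_1$ estimate does not close.
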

\begin{proof}
Taking the $L^2(\Omega)$ inner product of \eqref{eqValpha} with $\mathcal{V}$ yields
\begin{equation}\label{ttt1}
 \dfrac{1}{2}\dfrac{d}{dt}\int_{\Omega}\abs{\mathcal{V}}^2+\int_{\Omega} \nak \mathcal{Q}\cdot \mathcal{V} +\int_{\Omega} \bar\partial^2\Delta_*(G_0^T\cdot\nabla\eta_i)   G_0^T\cdot\nabla \mathcal{V}_i =\int_{\Omega} F\cdot \mathcal{V}.
\end{equation}
By the estimates \eqref{tes1}, \eqref{tes0}, \eqref{comest}, \eqref{vt} and  \eqref{press}, by the definition of $\mathcal{V}$, we have
\begin{equation}
\label{ggg0}
\begin{split}
\int_{\Omega} F\cdot \mathcal{V}&\le\left(\norm{\dt(\bp^4\eta^{\kappa}\cdot\nabla_{\a^{\kappa}}v) }_0+\norm{\mathcal C_i (q)}_0+\norm{[\bp^2\Delta_*, G_0^T\cdot\nabla]G_0^T\cdot\nabla\eta}_0  \right)\norm{\mathcal{V}}_0
\\&\leq \left( P(\norm{\eta}_4, \norm{v}_4, \norm{\dt v}_3)+P(\norm{\eta}_4)\norm{q}_4+\norm{G_0}_4\norm{G_0^T\cdot\nabla\eta}_4\right)P(\norm{\eta^\kappa}_4, \norm{v}_4)
\\   &\leq P\left(\sup_{t\in[0,T]} \mathfrak{E}^{\kappa}(t)\right).
\end{split}
\end{equation}
We now estimate the second and third terms on the left hand side of \eqref{ttt1}. By the definition of $\mathcal V$ and  recalling that $v=\partial_t\eta-\fk$, we have that for the third term
\begin{align}
\label{gg1}
&\int_{\Omega}  \bar\partial^2\Delta_*(G_0^T\cdot\nabla\eta_i)    G_0^T\cdot\nabla \mathcal{V}_i\nonumber
\\&\quad= \int_{\Omega}  \bar\partial^2\Delta_*(G_0^T\cdot\nabla\eta_i)   G_0^T\cdot\nabla\left(\bar\partial^2\Delta_* v_i- \bp^2\Delta_*\eta^{\kappa}\cdot\nak v_i\right)\nonumber
\\&\quad=\int_{\Omega}  \bar\partial^2\Delta_*(G_0^T\cdot\nabla\eta_i)   \left(\bar\partial^2\Delta_*(G_0^T\cdot\nabla v_i)-[\bar\partial^2\Delta_*,G_0^T\cdot\nabla] v_i-G_0^T\cdot\nabla(\bp^2\Delta_*\eta^{\kappa}\cdot\nak v_i)\right)\nonumber
\\&\quad=\hal \dfrac{d}{dt}\int_{\Omega}\abs{\bar\partial^2\Delta_*(G_0^T\cdot\nabla\eta)}^2\nonumber
\\&\quad\quad-\int_{\Omega}  \bar\partial^2\Delta_*(G_0^T\cdot\nabla\eta_i)   \left(\bar\partial^2\Delta_*(G_0^T\cdot\nabla  \psi^\kappa_i)+[\bar\partial^2\Delta_*,G_0^T\cdot\nabla] v_i+G_0^T\cdot\nabla(\bp^2\Delta_*\eta^{\kappa}\cdot\nak v_i)\right)\nonumber
\\ &\quad\ge \hal \dfrac{d}{dt}\int_{\Omega}\abs{\bar\partial^2\Delta_*(G_0^T\cdot\nabla\eta)}^2  - \norm{G_0^T\cdot\nabla\eta}_4 P(\norm{G_0^T\cdot\nabla\fk}_4,\norm{G_0}_4,\norm{v}_4,\norm{\eta^\kappa}_4,\norm{G_0^T\cdot\nabla\eta^\kappa}_4)\nonumber
\\ &\quad\ge \hal \dfrac{d}{dt}\int_{\Omega}\abs{\bar\partial^2\Delta_*(G_0^T\cdot\nabla\eta)}^2  - P\left(\sup_{t\in[0,T]} \mathfrak{E}^{\kappa}(t)\right) .
\end{align}
Here we have used the commutator estimates \eqref{co1}, the estimates \eqref{tes1}, \eqref{tes2} and \eqref{fest22}.

Now for the second term, by the integration by parts and using the equation \eqref{divValpha} and the boundary condition \eqref{qValpha}, we obtain
\begin{equation}
\label{gg2}
\begin{split}
\int_{\Omega} \nak \mathcal{Q}\cdot \mathcal{V}&=\int_{\Gamma} \mathcal{Q}\a^{\kappa}_{i\ell}N_\ell\mathcal{V}_i -\int_{\Omega} \mathcal{Q} \nabla_\ak\cdot \mathcal{V}-\int_{\Omega}\pa_\ell (\a^{\kappa}_{i\ell} )\mathcal{Q} \mathcal{V}_i\\&=
- \int_{\Gamma_1} \pa_3 q \bp^2\Delta_*(\Lambda_\kappa^2\eta_j)\a^\kappa_{j3} \a^{\kappa}_{i\ell}N_\ell\mathcal{V}_i - \int_{\Gamma_2} \pa_3 q \bp^2\Delta_*(\Lambda_\kappa^2\eta_j)\a^\kappa_{j3} \a^{\kappa}_{i\ell}N_\ell\mathcal{V}_i \\
&\ \ \ \ +\underbrace{\int_{\Omega} \mathcal{Q} \mathcal{C}_i(v_i)-\pa_\ell (\a^{\kappa}_{i\ell} )\mathcal{Q} \mathcal{V}_i }_{\mathcal R}.
\end{split}
\end{equation}
By the definition of $\mathcal{Q}$ and $\mathcal{V}$ and the estimates \eqref{press}, \eqref{comest} and \eqref{tes1},  we have
\begin{equation}
\label{gggg3}
\begin{split}
  \mathcal R&\ls\norm{\mathcal Q}_0(\norm{\mathcal C_i(v_i)}_0+\norm{D\a^{\kappa}}_{L^{\infty} }\norm{\mathcal V}_0)
 \\&\le P(\norm{\eta^\kappa}_4, \norm{q}_4)\left(P(\norm{\eta}_4)\norm{v}_4
 +\norm{D\a^{\kappa}}_{L^{\infty} }P(\norm{\eta^\kappa}_4, \norm{v}_4)\right)
 \\&\leq  P\left(\sup_{t\in[0,T]} \mathfrak{E}^{\kappa}(t)\right).
 \end{split}
\end{equation}
\noindent {\itshape{Estimates $\Gamma_1$ under the fulfilment of the Rayleigh-Taylor sign condition}}

By the definition of $\mathcal{V}$, since $\eta^{\kappa}=\Lambda_\kappa^2\eta$ on $\Gamma$ and $v=\partial_t\eta-\fk$, we have
\begin{equation}
\begin{split}
&- \int_{\Gamma_1} \pa_3 q\bp^2\Delta_* \Lambda_\kappa^2\eta_j \a^\kappa_{j3} \a^{\kappa}_{i\ell}N_\ell\mathcal{V}_i
\\&\quad = \int_{\Gamma_1} (-\nabla q\cdot N )\a^\kappa_{j3}\bp^2\Delta_* \Lambda_\kappa^2\eta_j  \a^{\kappa}_{i3} \mathcal{V}_i
 \\&\quad=\int_{\Gamma_1}(-\nabla q\cdot N )\a^\kappa_{j3}\bp^2\Delta_* \Lambda_\kappa^2\eta_j  \a^{\kappa}_{i3} (\bar\partial^2\Delta_* v_i- \bp^2\Delta_* \Lambda_\kappa^2\eta \cdot\nak v_i)
  \\&\quad=\int_{\Gamma_1}(-\nabla q\cdot N )\a^\kappa_{j3}\bp^2\Delta_* \Lambda_\kappa^2\eta_j  \a^{\kappa}_{i3} \left(\bar\partial^2\Delta_* \partial_t\eta_i - \bar\partial^2\Delta_* \psi^\kappa _i - \bp^2\Delta_* \Lambda_\kappa^2\eta \cdot\nak v_i\right)  .
\end{split}
\end{equation}
Note that
\begin{equation}
\begin{split}
&\int_{\Gamma_1}(-\nabla q\cdot N )\a^\kappa_{j3}\bp^2\Delta_* \Lambda_\kappa^2\eta_j  \a^{\kappa}_{i3} \bar\partial^2\Delta_* \partial_t\eta_i  \\
  &\quad
=\int_{\Gamma_1}(-\nabla q\cdot N )\a^\kappa_{j3}\bp^2\Delta_* \Lambda_\kappa \eta_j  \a^{\kappa}_{i3} \bar\partial^2\Delta_* \Lambda_\kappa \partial_t\eta_i
 + \int_{\Gamma_1}\bp^2\Delta_* \Lambda_\kappa \eta_j \left[\Lambda_\kappa,  -\nabla q\cdot N \a^\kappa_{j3}\a^{\kappa}_{i3}\right] \bar\partial^2\Delta_* \partial_t\eta_i \\
&\quad
=\hal \frac{d}{dt}\int_{\Gamma_1}(-\nabla q\cdot N )\abs{\a^\kappa_{i3}\bp^2\Delta_* \Lambda_\kappa \eta_i  }^2  +\hal \int_{\Gamma_1}\dt( \nabla q\cdot N ) \abs{\a^\kappa_{i3}\bp^2\Delta_* \Lambda_\kappa \eta_i  }^2
\\&\qquad +\int_{\Gamma_1} \nabla q\cdot N  \a^\kappa_{j3}\bp^2\Delta_* \Lambda_\kappa \eta_j  \dt\a^{\kappa}_{i3} \bar\partial^2\Delta_*  \Lambda_\kappa \eta_i
  +\int_{\Gamma_1}\bp^2\Delta_* \Lambda_\kappa \eta_j \left[\Lambda_\kappa, -\nabla q\cdot N \a^\kappa_{j3}\a^{\kappa}_{i3}\right] \bar\partial^2\Delta_* \partial_t\eta_i .
\end{split}
\end{equation}
Therefore, we obtain
\begin{equation}
\label{gg3}
\begin{split}
&- \int_{\Gamma_1} \pa_3 q\bp^2\Delta_* \Lambda_\kappa^2\eta_j \a^\kappa_{j3} \a^{\kappa}_{i\ell}N_\ell\mathcal{V}_i
 \\
&\quad
=\hal \frac{d}{dt}\int_{\Gamma_1}(-\nabla q\cdot N )\abs{\a^\kappa_{i3}\bp^2\Delta_* \Lambda_\kappa \eta_i  }^2
+\underbrace{\hal \int_{\Gamma_1}\dt(\nabla q\cdot N )\abs{\a^\kappa_{i3}\bp^2\Delta_* \Lambda_\kappa \eta_i  }^2   }_{\mathcal{I}_1}
\\&\qquad +\underbrace{\int_{\Gamma_1} \nabla q\cdot N  \a^\kappa_{j3}\bp^2\Delta_* \Lambda_\kappa \eta_j  \dt\a^{\kappa}_{i3} \bar\partial^2\Delta_*  \Lambda_\kappa \eta_i   }_{\mathcal{I}_2}
+\underbrace{\int_{\Gamma_1}\bp^2\Delta_* \Lambda_\kappa \eta_j \left[\Lambda_\kappa,  -\nabla q\cdot N \a^\kappa_{j3}\a^{\kappa}_{i3}\right] \bar\partial^2\Delta_* \partial_t\eta_i }_{\mathcal{I}_3}
\\&\qquad +\underbrace{\int_{\Gamma_1} \nabla q\cdot N \a^\kappa_{j3}\bp^2\Delta_* \Lambda_\kappa^2\eta_j  \a^{\kappa}_{i3}  \bp^2\Delta_* \Lambda_\kappa^2\eta \cdot\nak v_i   }_{\mathcal{I}_4}
  +\underbrace{\int_{\Gamma_1} \nabla q\cdot N \a^\kappa_{j3}\bp^2\Delta_* \Lambda_\kappa^2\eta_j  \a^{\kappa}_{i3}  \bar\partial^2\Delta_* \psi^\kappa_i  }_{\mathcal{I}_5}.
\end{split}
\end{equation}

We now estimate $\mathcal{I}_1$--$\i_5$. By the estimates \eqref{press}, we deduce
\begin{equation}
\label{i0}
\mathcal{I}_1 \ls \abs{\partial_3\partial_t q}_{L^{\infty}}\abs{\a^\kappa_{i3}\bp^2\Delta_* \Lambda_\kappa \eta_i  }^2\ls \norm{\partial_tq}_3\abs{\a^\kappa_{i3}\bp^2\Delta_* \Lambda_\kappa \eta_i  }^2\le P\left(\sup_{t\in[0,T]} \mathfrak{E}^{\kappa}(t)\right).
\end{equation}
By the identity \eqref{partialF}, we have
\begin{align}
\nonumber \i_2&= \int_{\Gamma_1} (-\nabla q\cdot N) \a^\kappa_{j3}\bp^2\Delta_* \Lambda_\kappa \eta_j   \a^{\kappa}_{i\ell}\pa_\ell\dt\eta^{\kappa}_m\a^\kappa_{m3} \bar\partial^2\Delta_*  \Lambda_\kappa \eta_i
\\&= \underbrace{\int_{\Gamma_1}  (-\nabla q\cdot N) \a^\kappa_{j3}\bp^2\Delta_* \Lambda_\kappa \eta_j   \a^{\kappa}_{i3}\pa_3\dt\eta^{\kappa}_m\a^\kappa_{m3} \bar\partial^2\Delta_*  \Lambda_\kappa \eta_i   }_{\i_{2a}}\nonumber
\\&\quad+\int_{\Gamma_1}  (-\nabla q\cdot N)  \a^\kappa_{j3}\bp^2\Delta_* \Lambda_\kappa \eta_j   \a^{\kappa}_{i\alpha}\pa_\alpha\dt\Lambda_{\kappa}^2\eta_m\a^\kappa_{m3} \bar\partial^2\Delta_*  \Lambda_\kappa \eta_i.\label{tmp1}
\end{align}
As usual, we obtain
\begin{equation}
\label{1a}
\i_{2a} \ls\abs{\a^\kappa_{j3}\bp^2\Delta_* \Lambda_\kappa \eta_j  }_0^2\abs{\partial_3 q\pa_3\dt\eta^{\kappa}_m\a^\kappa_{m3}}_{L^{\infty}}\le P\left(\sup_{t\in[0,T]} \mathfrak{E}^{\kappa}(t)\right).
\end{equation}
On the other hand, using $\dt\eta=v+\fk$ we have
\begin{align}
&\int_{\Gamma_1} (-\nabla q\cdot N) \a^\kappa_{j3}\bp^2\Delta_* \Lambda_\kappa \eta_j   \nonumber \a^{\kappa}_{i\alpha}\pa_{\alpha}\dt\Lambda_{\kappa}^2\eta_m\a^\kappa_{m3} \bar\partial^2\Delta_*  \Lambda_\kappa \eta_i
\\&\quad=\underbrace{\int_{\Gamma_1}  (-\nabla q\cdot N) \a^\kappa_{j3}\bp^2\Delta_* \Lambda_\kappa \eta_j   \a^{\kappa}_{i\alpha}\pa_\alpha \Lambda_{\kappa}^2 v_m \a^\kappa_{m3} \bar\partial^2\Delta_*  \Lambda_\kappa \eta_i   }_{\i_{2b}}\nonumber
\\&\qquad+\underbrace{\int_{\Gamma_1}  (-\nabla q\cdot N) \a^\kappa_{j3}\bp^2\Delta_* \Lambda_\kappa \eta_j   \a^{\kappa}_{i\alpha}\pa_\alpha \Lambda_{\kappa}^2 \psi^\kappa_m\a^\kappa_{m3} \bar\partial^2\Delta_*  \Lambda_\kappa \eta_i   }_{\i_{2c}}
\label{ttt}
\end{align}
To estimate $\i_{2c}$, the difficulty is that one can not have an $\kappa$-independent control of $\abs{\bar\partial^2\Delta_*  \Lambda_\kappa \eta_i }_0$. Our observation is that since $\fk\rightarrow 0$ as $\kappa\rightarrow 0$, this motives us to deduce the following estimates:
 \begin{equation}\label{lwuqing}
 \abs{\bar\partial   \fk}_{L^{\infty} }\leq \sqrt\kappa P(\norm{\eta}_4,\norm{v}_3).
 \end{equation}
Indeed, we can rewrite the boundary condition in \eqref{etaaa} as
\begin{align}
 \fk=\Delta_{*}^{-1}\mathbb{P} f^\kappa,\ f^\kappa:= \Delta_{*} (\eta_j-{\Lambda_{\kappa}^2\eta_j})\a^{\kappa}_{j\alpha}\partial_{\alpha}{\Lambda_{\kappa}^2 v}-\Delta_{*}{\Lambda_{\kappa}^2\eta}_j\a^{\kappa}_{j\alpha}\partial_{\alpha}(v-{\Lambda_{\kappa}^2 v}) .
\end{align}
By using Morrey's inequality and the Sobolev embeddings and the trace theorem,
\begin{equation}
\abs{g-\Lambda_{\kappa}g}_{L^\infty}\ls \sqrt{\kappa}\abs{\bp g}_{L^4} \ls \sqrt{\kappa}\abs{g}_{3/2}\ls \sqrt{\kappa}\norm{g}_{2},
\end{equation}
we obtain
\begin{equation}
\begin{split}
\abs{f^\kappa}_{L^\infty}&\ls\abs{\a^{\kappa}_{j\alpha}\partial_{\alpha}\Lambda_{\kappa}^2 v}_{L^\infty}\abs{\Delta_*\eta-\Lambda_{\kappa}^2\Delta_*\eta}_{L^\infty}+ \abs{\Delta_{*}{\Lambda_{\kappa}^2\eta}_j\a^{\kappa}_{j\alpha}}_{L^\infty}\abs{\partial_{\alpha}v-\Lambda_{\kappa}^2 \partial_{\alpha}v}_{L^\infty} \\&\ls\sqrt\kappa P(\norm{\eta}_4,\norm{v}_3).
\end{split}
\end{equation}
Then by the elliptic estimate and the Sobolev embeddings, we deduce
 \begin{equation}
\abs{\bar\partial \fk}_{L^{\infty} }\ls\abs{\bar\partial \fk}_{W^{1,4}}\ls\abs{f^\kappa}_{L^4}\ls\abs{f^\kappa}_{L^\infty} \ls \sqrt\kappa P(\norm{\eta}_4,\norm{v}_3),
 \end{equation}
 which proves \eqref{lwuqing}. Hence, by \eqref{lwuqing} together with \eqref{loss}, we have
\begin{equation}\label{22c}
\begin{split}
\i_{2c}&\ls \abs{\partial_3q\a^\kappa_{m3}\a^{\kappa}_{i\alpha}}_{L^{\infty} }\abs{\a^\kappa_{j3}\bp^2\Delta_* \Lambda_\kappa \eta_j }_0\abs{\bar\partial^2\Delta_* \Lambda_\kappa \eta_i}_0\abs{\pa_\alpha \Lambda_{\kappa}^2 \psi^\kappa_m}_{L^{\infty} }
\\ &\ls\abs{\partial_3q\a^\kappa_{m3}\a^{\kappa}_{i\alpha}}_{L^{\infty} }\abs{\a^\kappa_{j3}\bp^2\Delta_* \Lambda_\kappa \eta_j }_0\dfrac{1}{\sqrt{\kappa}}\abs{\eta}_{7/2}\sqrt{\kappa}P(\norm{\eta}_4,\norm{v}_3)
\\&\leq P\left(\sup_{t\in[0,T]} \mathfrak{E}^{\kappa}(t)\right).
\end{split}
\end{equation}
Note that the term $\i_{2b}$ is out of control by an $\kappa$-independent bound alone.

For $\i_3$, by the commutator estimates \eqref{es1-1/2}, \eqref{co123}, \eqref{press}, \eqref{tes1} and \eqref{fest1}, we obtain
\begin{align}
\nonumber
\i_3&\leq \abs{\bp^2\Delta_* \Lambda_\kappa \eta_j }_{-1/2}\abs{\left[\Lambda_\kappa, (-\nabla q\cdot N )\a^\kappa_{j3}\a^{\kappa}_{i3}\right] \bar\partial(\bp\Delta_* \partial_t\eta_i)}_{1/2}
\\\nonumber &\ls \abs{\bp\Delta_* \Lambda_\kappa \eta_j }_{1/2}\abs{ \partial_3 q\a^{\kappa}_{j3}\a^{\kappa}_{i3}}_{W^{1,\infty} }\abs{\bp\Delta_*\partial_t\eta}_{1/2}\nonumber
\\&\ls \norm{\eta}_{4}\norm{\partial_3 q\a^{\kappa}_{j3}\a^{\kappa}_{i3}}_3\norm{v+\psi^{\kappa}}_{4} \leq P\left(\sup_{t\in[0,T]} \mathfrak{E}^{\kappa}(t)\right).
\label{commm1}
\end{align}

To control $\i_4$, similarly as \eqref{tmp1}, we write
\begin{align}
\nonumber
\i_4=&\underbrace{\int_{\Gamma_1}(\nabla q\cdot N )\a^\kappa_{j3}\bp^2\Delta_* \Lambda_\kappa^2\eta_j  \a^{\kappa}_{m3}  \bp^2\Delta_*\Lambda_\kappa^2\eta_{i}\a^{\kappa}_{i3}\partial_3 v_m   }_{\i_{4a}}\\&+\underbrace{\int_{\Gamma_1}(\nabla q\cdot N )\a^\kappa_{j3}\bp^2\Delta_* \Lambda_\kappa^2\eta_j \a^{\kappa}_{i\alpha} \partial_{\alpha} v_m\a^{\kappa}_{m3} \bp^2\Delta_*\Lambda_\kappa^2\eta_i  }_{\i_{4b}}.\label{ttttt}
\end{align}
By the commutator estimates \eqref{es1-0}, we have
\begin{equation}
\label{ennennene}
\begin{split}
\abs{\a^\kappa_{j3}\bp^2\Delta_* \Lambda_\kappa^2\eta_j }_0&\ls \abs{[\Lambda_{\kappa}, \a^\kappa_{j3}]\bp (\bp\Delta_*\Lambda_{\kappa}\eta_j)}_0+\abs{\a^\kappa_{j3}\bp^2\Delta_*(\Lambda_\kappa\eta_j)}_0
\\&\ls\abs{\a^{\kappa}}_{W^{1,\infty} }\abs{\bp\Delta_*\Lambda_{\kappa}\eta_j}_0+\abs{\a^\kappa_{j3}\bp^2\Delta_*(\Lambda_\kappa\eta_j)}_0.
\end{split}
\end{equation}
Then we obtain
\begin{equation}\label{3b}
\i_{4a}\ls
\left(\abs{\a^\kappa_{j3}\bp^2\Delta_* \Lambda_\kappa^2\eta_j }_0^2+\abs{\ak}_{W^{1,\infty}}^2\norm{\eta}_4^2\right)\abs{\partial_3q\a^{\kappa}_{m3}\partial_3 v_m}_{L^{\infty} }
 \leq P\left(\sup_{t\in[0,T]} \mathfrak{E}^{\kappa}(t)\right).
\end{equation}
Note that the term $\i_{4b}$ is also out of control by an $\kappa$-independent bound alone.

Now we take care of $\i_{2b}$ and $\i_{4b}$. Notice that $\mathcal{I}_{2b}$ and $\mathcal{I}_{4b}$ are cancelled out in the limit $\kappa\rightarrow0$, however, it is certainly not the case when $\kappa>0$. This is most involved thing in the tangential energy estimates. Note also that we can not use the commutator estimate to interchange the position of the mollifier operator $\Lambda_\kappa$ in each of two terms since  $\abs{\bar\partial^2\Delta_*\eta}_{L^\infty}$ is out of control.
The key point here is to use the term $\i_5$, by the definition of the modification term $\fk$, to kill out both $\i_{2b}$ and $\i_{4b}$; this is exactly the reason that we have introduced $\fk$. By the boundary condition in \eqref{etaaa}, we deduce
\begin{equation}
\begin{split}
\i_5&=\int_{\Gamma_1}\nabla q\cdot N \a^\kappa_{j3}\bp^2\Delta_* \Lambda_\kappa^2\eta_j  \a^{\kappa}_{i3}  \bar\partial^2 \left( \Delta_{*}\eta_m \a^\kappa_{m\alpha}\pa_\alpha{\Lambda_{\kappa}^2 v_i}-\Delta_{*}\Lambda_{\kappa}^2\eta_m \a^\kappa_{m\alpha}\pa_\alpha{ v_i}\right)
\\&=\int_{\Gamma_1} \nabla q\cdot N  \a^\kappa_{j3}\bp^2\Delta_* \Lambda_\kappa^2\eta_j  \a^{\kappa}_{i3}    \bar\partial^2 \Delta_{*}\eta_m \a^\kappa_{m\alpha}\pa_\alpha{\Lambda_{\kappa}^2 v_i}
\\&\quad+\underbrace{\int_{\Gamma_1} (-\nabla q\cdot N ) \a^\kappa_{j3}\bp^2\Delta_* \Lambda_\kappa^2\eta_j  \a^{\kappa}_{i3}
\bar\partial^2 \Delta_{*}\Lambda_{\kappa}^2\eta_m \a^\kappa_{m\alpha}\pa_\alpha{ v_i} }_{-\i_{4b}}
\\&\quad+\underbrace{\int_{\Gamma_1} \nabla q\cdot N \a^\kappa_{j3}\bp^2\Delta_* \Lambda_\kappa^2\eta_j  \a^{\kappa}_{i3}  \left( \left[\bar\partial^2, \a^\kappa_{m\alpha}\pa_\alpha{\Lambda_{\kappa}^2 v_i}\right]\Delta_{*}\eta_m -\left[\bar\partial^2,\a^\kappa_{m\alpha}\pa_\alpha{ v_i}\right] \Delta_{*}\Lambda_{\kappa}^2\eta_m \right)  }_{\i_{5a}}
\label{tmp2}
\end{split}
\end{equation}
By doing estimates as usual and using \eqref{ennennene} again, we have
\begin{equation}
\label{2a}
\begin{split}
\i_{5a} &\ls |\partial_3q\a^{\kappa}_{i3}|_{L^{\infty}(\Omega)}\abs{\a^\kappa_{j3}\bp^2\Delta_* \Lambda_\kappa^2\eta_j }_0\abs{\left(\left[\bar\partial^2, \a^\kappa_{m\alpha}\pa_\alpha{\Lambda_{\kappa}^2 v_i}\right]\Delta_{*}\eta_m -\left[\bar\partial^2,\a^\kappa_{m\alpha}\pa_\alpha{ v_i}\right] \Delta_{*}\Lambda_{\kappa}^2\eta_m \right)}_0
\\
&\le P\left(\sup_{t\in[0,T]} \mathfrak{E}^{\kappa}(t)\right).
\end{split}
\end{equation}
We rewrite the first term as
\begin{align}
\nonumber&\int_{\Gamma_1} \nabla q\cdot N \a^\kappa_{j3}\bp^2\Delta_* \Lambda_\kappa^2\eta_j  \a^{\kappa}_{i3}    \bar\partial^2 \Delta_{*}\eta_m \a^\kappa_{m\alpha}\pa_\alpha{\Lambda_{\kappa}^2 v_i}
\\\nonumber&\quad=\underbrace{\int_{\Gamma_1} \nabla q\cdot N  \a^\kappa_{j3}\bp^2\Delta_* \Lambda_\kappa \eta_j  \a^{\kappa}_{i3}    \bar\partial^2 \Delta_{*} \Lambda_\kappa \eta_m \a^\kappa_{m\alpha}\pa_\alpha{\Lambda_{\kappa}^2 v_i}}_{-\i_{2b}}
\\ &\qquad+\underbrace{\int_{\Gamma_1}\bp^2\Delta_* \Lambda_\kappa \eta_j \left[\Lambda_\kappa,  \nabla q\cdot N \a^\kappa_{j3}\a^{\kappa}_{i3} \a^\kappa_{m\alpha}\pa_\alpha{\Lambda_{\kappa}^2 v_i}\right]\bar\partial^2 \Delta_{*}\eta_m   }_{\i_{5b}}.\label{temp3}
\end{align}
By arguing similarly as \eqref{commm1} for $\i_3$, we have
\begin{equation}
\label{pw}
\i_{5b}\le \norm{\eta}_{4}\norm{\partial_3 q\a^\kappa_{j3}\a^{\kappa}_{i3} \a^\kappa_{m\alpha}\pa_\alpha{\Lambda_{\kappa}^2 v_i}}_3\norm{\eta}_{4}\leq P\left(\sup_{t\in[0,T]} \mathfrak{E}^{\kappa}(t)\right).
\end{equation}

Now combining \eqref{tmp1}, \eqref{ttt}, \eqref{ttttt}, \eqref{tmp2} and \eqref{temp3},  and using the estimates \eqref{1a}, \eqref{22c}, \eqref{3b}, \eqref{2a} and \eqref{pw}, we deduce
\begin{equation}\label{ggiip}
\i_2+\i_4+\i_5=\i_{2a}+\i_{2c}+\i_{4a}+\i_{5a}+\i_{5b}\le P\left(\sup_{t\in[0,T]} \mathfrak{E}^{\kappa}(t)\right).
\end{equation}

\noindent {\itshape{Estimates $\Gamma_2$ under the fulfilment of the non-collinearity condition}}

By the definition of $\mathcal{V}$, since $\eta^{\kappa}=\Lambda_\kappa^2\eta$ on $\Gamma$ and $v=\partial_t\eta-\fk$, we have
\begin{equation}
\begin{split}
&- \int_{\Gamma_2} \pa_3 q\bp^2\Delta_* \Lambda_\kappa^2\eta_j \a^\kappa_{j3} \a^{\kappa}_{i\ell}N_\ell\mathcal{V}_i
\\&\quad = \int_{\Gamma_2} (-\nabla q\cdot N )\a^\kappa_{j3}\bp^2\Delta_* \Lambda_\kappa^2\eta_j  \a^{\kappa}_{i3} \mathcal{V}_i
 \\&\quad=\int_{\Gamma_2}(-\nabla q\cdot N )\a^\kappa_{j3}\bp^2\Delta_* \Lambda_\kappa^2\eta_j  \a^{\kappa}_{i3} (\bar\partial^2\Delta_* v_i- \bp^2\Delta_* \Lambda_\kappa^2\eta \cdot\nak v_i)
  \\&\quad=I_1+I_2.
\end{split}
\end{equation}
Using Lemma \ref{IG_0}, we have
\begin{equation}\label{I_1}
\begin{split}
I_1&=\int_{\Gamma_2}(-\nabla q\cdot N )\a^\kappa_{j3}\bp^2\Delta_* \Lambda_\kappa^2\eta_j  \a^{\kappa}_{i3} \bar\partial^2\Delta_* v_i\\
&\lesssim \abs{\nabla q\cdot N\a^\kappa_{j3} \a^{\kappa}_{i3} }_{L^\infty}\abs{\bp^2\Delta_* \Lambda_\kappa^2\eta_j}_{H^{\frac{1}{2}}}\abs{\bar\partial^2\Delta_* v_i}_{H^{-\frac{1}{2}}}\\
&\lesssim\abs{\nabla q\cdot N\a^\kappa_{j3} \a^{\kappa}_{i3} }_{L^\infty}\abs{\eta}_{H^{4.5}}\abs{v}_{H^{3.5}}\\
&\lesssim P\left(\sup_{t\in[0,T]} \mathfrak{E}^{\kappa}(t)\right),
\end{split}
\end{equation}
\begin{equation}\label{I_2}
\begin{split}
I_2&=-\int_{\Gamma_2}(-\nabla q\cdot N )\a^\kappa_{j3}\bp^2\Delta_* \Lambda_\kappa^2\eta_j  \a^{\kappa}_{i3} \bp^2\Delta_* \Lambda_\kappa^2\eta \cdot\nak v_i\\
&\lesssim\abs{\nabla q\cdot N\a^\kappa_{j3}\a^{\kappa}_{i3}\nak v_i}_{L^\infty}\|\bp^2\Delta_* \Lambda_\kappa^2\eta_j \|_{L^2}\|\bp^2\Delta_* \Lambda_\kappa^2\eta\|_{L^2}\\
&\lesssim P\left(\sup_{t\in[0,T]} \mathfrak{E}^{\kappa}(t)\right).
\end{split}
\end{equation}

Finally, combining \eqref{gg1}, \eqref{gg2} and \eqref{gg3}, and using the estimates \eqref{ggg0}, \eqref{gggg3}, \eqref{commm1}, \eqref{ggiip},  \eqref{I_1} and  \eqref{I_2} we obtain
\begin{equation}\label{ohoh}
 \dfrac{d}{dt}\left(\int_{\Omega}\abs{\mathcal{V}}^2+\abs{\bar\partial^2\Delta_*( G_0^T\cdot\nabla\eta)}^2 + \int_{\Gamma_1}(-\nabla q\cdot N )\abs{\bp^2\Delta_* \Lambda_\kappa \eta_i \a^\kappa_{i3} }^2   \right)\leq  P\left(\sup_{t\in[0,T]} \mathfrak{E}^{\kappa}(t)\right).
\end{equation}
Integrating \eqref{ohoh} directly in time, by the a priori assumption \eqref{ini2}, we have
\begin{equation}\label{ohoh1}
\norm{\mathcal V(t)}_0^2+\norm{\bar\partial^4 (G_0^T\cdot\nabla \eta)(t)}_0^2+\abs{\bar\partial^2\Delta_*\Lambda_{\kappa}\eta_i \a^{\kappa}_{i3}(t)}^2_{L^2(\Gamma_1)}\leq M_0+TP\left(\sup_{t\in[0,T]} \mathfrak{E}^{\kappa}(t)\right).
\end{equation}
By the definition of $\mathcal{V}$, using \eqref{ohoh1} and  \eqref{etaest}, using the fundamental theorem of calculous, we get
\begin{equation}
\begin{split}
\norm{\bar\partial^4 v(t)}_0^2&\ls \norm{\bar\partial^2\Delta_* v(t)}_0^2\ls\norm{\mathcal V(t)}_0^2+\norm{\bar\partial^2\Delta_*\eta}_0^2\norm{\a^{\kappa}_{j\ell}\partial_{\ell} v}_{L^{\infty}}^2
\\&\leq M_0+TP\left(\sup_{t\in[0,T]} \mathfrak{E}^{\kappa}(t)\right).
\end{split}
\end{equation}
We thus conclude the proposition.
\end{proof}

Collecting the estimates above, we obtain the Proposition \ref{tane}.

\subsubsection{Curl and divergence estimates}\label{curle}
In view of the Hodge-type elliptic estimates, we can control one vector's all derivatives just by its curl, divergence and normal trace. Thus, we now derive the curl and divergence estimates.
We begin with the $\curl$ estimates.
\begin{proposition}
For $t\in [0,T]$ with $T\le T_\kappa$, it holds that
\begin{equation}
\label{curlest}
\norm{\curl v(t)}_{3}^2+\norm{\curl(G_0^T\cdot\nabla\eta)(t)}_{3}^2\leq M_0+TP\left(\sup_{t\in[0,T]} \mathfrak{E}^{\kappa}(t)\right).
\end{equation}
For a matrix $G$, we denote
\begin{equation}
(\curl G)_{ijk}=\partial_i G_{kj}- \partial_j G_{ki}.
\end{equation}
\end{proposition}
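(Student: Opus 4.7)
The plan is to convert the curl component of the momentum equation into a variable-coefficient wave equation for $\curl\eta$ propagating along $G_0$, and then to close an $H^3$ hyperbolic energy estimate. Applying the Eulerian curl to the momentum equation in \eqref{approximate} and using $\curl\nabla q=0$ to reduce the pressure contribution to $\curl((\ak-I)\nabla q)$, together with the elementary commutation identity
\[
\curl\bigl((G_0^T\cdot\nabla)^2\eta\bigr)=(G_0^T\cdot\nabla)^2\curl\eta+\mathcal R(\eta,G_0),
\]
and with $\partial_t\eta=v+\fk$ (so that $\partial_t\curl\eta=\curl v+\curl\fk$), produces the wave equation
\[
\partial_{tt}\curl\eta-(G_0^T\cdot\nabla)^2\curl\eta=-\curl((\ak-I)\nabla q)+\mathcal R+\partial_t\curl\fk.
\]
Here $\mathcal R$ is a schematic sum of $(\partial G_0)(\partial G_0)(\partial\eta)$ and $(\partial G_0)\,\partial(G_0^T\cdot\nabla\eta)$ type terms, the second form being obtained by rewriting $G_0\cdot\partial^2\eta=\partial(G_0^T\cdot\nabla\eta)-(\partial G_0)(\partial\eta)$ so as to keep only $H^4$ quantities; the $H^3$ algebra property then yields $\norm{\mathcal R}_3\le P(\norm{G_0}_4,\norm{\eta}_4,\norm{G_0^T\cdot\nabla\eta}_4)$.

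I would then perform the $H^3$ wave-energy estimate: for each multi-index $|\alpha|\le 3$, apply $D^\alpha$ to the wave equation, multiply by $\partial_t D^\alpha\curl\eta$, and integrate over $\Omega$. The key integration by parts
\[
-\int_\Omega(G_0^T\cdot\nabla)^2 h\,\partial_t h=\tfrac{1}{2}\tfrac{d}{dt}\int_\Omega\bigl|(G_0^T\cdot\nabla)h\bigr|^2,\qquad h:=D^\alpha\curl\eta,
\]
produces no boundary contribution, because $G_0^T\cdot N=0$ on $\Gamma$ (so the vector fields $G_{0mk}\partial_m$ are tangent to $\Gamma$) and $\partial_m G_{0m\ell}=0$ in $\Omega$. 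The commutators $[D^\alpha,(G_0^T\cdot\nabla)^2]\curl\eta$ are bounded in $L^2$ by $P(\norm{G_0}_4,\norm{\eta}_4,\norm{(G_0^T\cdot\nabla)\curl\eta}_3)$ via Moser-type $H^3$ product estimates. The most delicate term is $D^\alpha\curl((\ak-I)\nabla q)$, whose worst piece $(\ak-I)\partial^3\nabla q$ I would handle by integrating by parts in space against $\partial_t D^\alpha\curl\eta$, transferring one $q$-derivative onto the test function and absorbing via Cauchy--Schwarz the smallness of $\ak-I$ from \eqref{inin3} together with the positive energy $\norm{(G_0^T\cdot\nabla)D^\alpha\curl\eta}_0^2$; the pressure estimate \eqref{press} then controls $\norm{q}_4$ and $\norm{\partial_t q}_3$. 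Summing over $|\alpha|\le 3$, integrating from $0$ to $t$, and using the initial conditions $\curl\eta(0)=0$, $\partial_t\curl\eta(0)=\curl v_0$, I arrive at
\[
\norm{\partial_t\curl\eta(t)}_3^2+\norm{(G_0^T\cdot\nabla)\curl\eta(t)}_3^2\le M_0+TP\Bigl(\sup_{[0,T]}\mathfrak E^\kappa\Bigr).
\]

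To conclude \eqref{curlest}: since $\curl v=\partial_t\curl\eta-\curl\fk$ and $\fk(0)=0$, \eqref{fest2} gives $\norm{\curl\fk(t)}_3\le\int_0^t\norm{\partial_t\fk}_4\,ds\le TP(\sup\mathfrak E^\kappa)$, so that $\norm{\curl v(t)}_3^2\le M_0+TP$. For the second piece, the analogous commutation identity $\curl(G_0^T\cdot\nabla\eta)=(G_0^T\cdot\nabla)\curl\eta+\mathcal R_1(\eta,G_0)$ with $\norm{\mathcal R_1}_3\le P(\norm{G_0}_4,\norm{\eta}_4)$, combined with the wave-energy control of $\norm{(G_0^T\cdot\nabla)\curl\eta}_3$ and the transport estimate \eqref{etaest} for $\norm{\eta}_4$, closes the inequality. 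The main obstacle is that $\norm{\curl((G_0^T\cdot\nabla)^2\eta)}_3$ is \emph{not} directly controllable by $\mathfrak E^\kappa$---doing so would demand $G_0^T\cdot\nabla\eta\in H^5$, beyond the $H^4$ regularity supplied by the energy. The wave-equation recasting is essential: the missing $(G_0^T\cdot\nabla)$-regularity is supplied by the positive energy term $\norm{(G_0^T\cdot\nabla)\curl\eta}_3^2$ on the left-hand side rather than appearing on the forcing side. A secondary subtlety is that $\curl\nabla_{\ak}q\ne 0$, but writing it as $\curl((\ak-I)\nabla q)$ exposes the small factor $\ak-I$ and permits the integration-by-parts argument above without any appeal to $H^5$ regularity of $q$.
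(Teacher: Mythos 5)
Your overall architecture (recast the curl of the momentum equation as a variable-coefficient wave equation along $G_0^T\cdot\nabla$ for $\curl\eta$, then run an $H^3$ hyperbolic energy estimate, use $G_0^T\cdot N=0$ and $\Div G_0^T=0$ to kill boundary terms, and control the commutators by $H^3$ Moser estimates) is sound and essentially parallel to the paper's first-order formulation in $(\curl_{\ak}v,\curl_{\ak}(G_0^T\cdot\nabla\eta))$. But there is a genuine gap at the pressure term, and it is precisely the reason the paper uses the Lagrangian curl $\curl_{\ak}$ rather than the Eulerian curl.

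Taking Eulerian $\curl$ of the momentum equation leaves a residual $\curl\bigl((\ak-I)\nabla q\bigr)$. Its top-order piece is $\epsilon_{ij\ell}(\ak-I)_{\ell m}\partial_j\partial_m q$ (the two other derivatives do not cancel via $\curl\nabla q=0$, since $\epsilon_{ij\ell}$ is antisymmetric in $(j,\ell)$, not in $(j,m)$). After applying $D^3$, this contains $(\ak-I)D^3\partial_j\partial_m q$, which is five derivatives of $q$, while Proposition \ref{pressure} only gives $q\in H^4$. Pairing with the test function $\partial_t D^3\curl\eta=D^3\curl(v+\fk)$, which already carries four derivatives of $v$, and integrating by parts simply transfers the excess derivative to the test function, producing five derivatives of $v$; but $v\in H^4$ only, so the excess cannot land anywhere. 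The smallness of $\ak-I$ does not help because the offending integrand is not finite to begin with, and the positive wave energy $\|(G_0^T\cdot\nabla)D^3\curl\eta\|_0^2$ controls only the characteristic direction $G_0^T\cdot\nabla$, which is degenerate (it vanishes where $G_0=0$, and along $\Gamma$ it is purely tangential since $G_0^T\cdot N=0$); it does not give you an arbitrary fifth spatial derivative of $\eta$ or $v$. So the absorption argument you sketch cannot close, and there is a net loss of one derivative.

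The fix is the paper's: take $\curl_{\ak}$, with $(\curl_{\ak}g)_i=\epsilon_{ij\ell}\ak_{jm}\partial_m g_\ell$. Then $\curl_{\ak}\nak q=0$ identically — the second-derivative piece $\epsilon_{ij\ell}\ak_{jm}\ak_{\ell n}\partial_m\partial_n q$ vanishes by the symmetry $(j,m)\leftrightarrow(\ell,n)$, and the first-derivative piece vanishes by the same symmetry after using \eqref{partialF} to write $\partial_m\ak_{\ell n}=-\ak_{\ell p}\partial_m\partial_p\eta^\kappa_s\ak_{sn}$. With the pressure gone, the remaining terms in \eqref{curl} are exactly the commutators you describe, and the first-order energy for the pair $\bigl(\curl_{\ak}v,\curl_{\ak}(G_0^T\cdot\nabla\eta)\bigr)$ closes in $H^3$. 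Two further small corrections to your write-up: the conclusion should pass from $\curl_{\ak}$ back to $\curl$ via $\curl f=\curl_{\ak}f+\bigl(\int_0^t\partial_t\ak\,d\tau\bigr)Df$ and the fundamental theorem of calculus (as the paper does), rather than through an identity $\curl(G_0^T\cdot\nabla\eta)=(G_0^T\cdot\nabla)\curl\eta+\mathcal R_1$ whose use is fine but superfluous once you work with $\curl_{\ak}$; and the initial data for the energy should come from $\curl_{\ak}v_0=\curl v_0$ and $\curl_{\ak}(G_0^T\cdot\nabla\eta)|_{t=0}=\curl G_0^T$, both bounded by $M_0$.
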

\begin{proof}
By taking the $\curl_{\a^{\kappa}}$ of the second equation in \eqref{approximate}, we have that
\begin{equation}
\curl_{\a^{\kappa}} \partial_tv=\curl_{\a^{\kappa}}\left((G_0^T \cdot\nabla)^2\eta\right),
\end{equation}
where $(\curl_{\ak}g)_i=\epsilon_{ij\ell}\a^{\kappa}_{jm}\partial_m g_{\ell}$.
It follows that
\begin{equation}\label{curl}
\partial_t(\curl_{\a^{\kappa}}v)_i-G_0^T\cdot\nabla\left(\curl_{\a^{\kappa}}\left(G_0^T\cdot\nabla \eta\right)\right)_i=\left(\left[\curl_{\a^{\kappa}}, G_0^T\cdot\nabla\right](G_0^T\cdot\nabla \eta)\right)_i+\epsilon_{ij\ell} \partial_t\a^{\kappa}_{jm} \partial_mv_{\ell}.
\end{equation}
Apply further $D^3$ to \eqref{curl} to get
\begin{equation}\label{curl2}
\partial_t(D^3\curl_{\a^{\kappa}}v)_i-G_0^T\cdot\nabla\left(D^3\curl_{\a^{\kappa}}\left(G_0^T\cdot\nabla \eta\right)\right)_i=F_i,
\end{equation}
with
\begin{equation}
F_i:=[D^3,G_0^T\cdot\nabla](\curl_{\a^{\kappa}}(G_0^T\cdot\nabla \eta))+D^3\left(\left(\left[\curl_{\a^{\kappa}}, G_0^T\cdot\nabla\right](G_0^T\cdot\nabla \eta)\right)_i+\epsilon_{ij\ell} \partial_t\a^{\kappa}_{jm} \partial_mv_{\ell}\right).
\end{equation}

Taking the $L^2$ inner product of \eqref{curl2} with $D^3\curl_{\a^{\kappa}}v$, by the integration by parts, we get
\begin{align}\label{j00}
&\dfrac{1}{2}\dfrac{d}{dt}\int_{\Omega}\abs{D^3\curl_{\a^{\kappa}}v}^2+\underbrace{\int_{\Omega}D^3\curl_{\a^{\kappa}}(G_0^T\cdot\nabla \eta)\cdot D^3\curl_{\a^{\kappa}}(G_0^T\cdot\nabla v)}_{\mathcal{J}_1}\\&\quad=\underbrace{\int_\Omega F \cdot D^3\curl_{\a^{\kappa}}v}_{\j_2}+\underbrace{\int_{\Omega}D^{3}\curl_{\a^{\kappa}}(G_0^T\cdot\nabla \eta)\cdot \left[D^3\curl_{\a^{\kappa}}, G_0^T\cdot\nabla\right] v}_{\j_3}.\nonumber
\end{align}
Since $v=\dt\eta-\fk$, we have
\begin{equation}
\label{j0}
\begin{split}
\j_1=&\dfrac{1}{2}\dfrac{d}{dt}\int_{\Omega}\abs{D^3\curl_{\a^{\kappa}}(G_0^T\cdot\nabla \eta)}^2
\underbrace{-\int_{\Omega}D^3\curl_{\a^{\kappa}}(G_0^T\cdot\nabla \eta)\cdot D^3\curl_{\a^{\kappa}}(G_0^T\cdot\nabla \fk)}_{\j_{1a}}\\&-\underbrace{\dfrac{1}{2}\int_{\Omega}D^3(\curl_{\a^{\kappa}}(G_0^T\cdot\nabla \eta))_i\cdot D^3(\epsilon_{ij\ell} \partial_t\a^{\kappa}_{jm} \partial_m(G_0^T\cdot\nabla\eta_\ell))}_{\j_{1b}}.
\end{split}
\end{equation}
By the estimates \eqref{fest22} and \eqref{tes1}, we obtain
\begin{equation}
\label{j0b}
\j_{1a}\ls \norm{G_0^T\cdot\nabla\eta}_4\norm{\a^{\kappa}}_3^2\norm{G_0^T\cdot\nabla\fk}_4\leq P\left(\sup_{t\in[0,T]} \mathfrak{E}^{\kappa}(t)\right).
\end{equation}
By the identity \eqref{partialF} and the estimates \eqref{tes0} and \eqref{fest1}, we have
\begin{equation}
\j_{1b}
\ls  \norm{\a^{\kappa}}_3\norm{D(G_0^T\cdot\nabla\eta)}_3\norm{\partial_t\a^{\kappa}}_3\norm{D(G_0^T\cdot\nabla\eta)}_3
  \le P\left(\sup_{t\in[0,T]} \mathfrak{E}^{\kappa}(t)\right).
\end{equation}
Hence, we obtain
\begin{equation}
\label{j1}
\j_1\ge \dfrac{1}{2}\dfrac{d}{dt}\int_{\Omega}\abs{D^3\curl_{\a^{\kappa}}(G_0^T\cdot\nabla \eta)}^2
-P\left(\sup_{t\in[0,T]} \mathfrak{E}^{\kappa}(t)\right).
\end{equation}

We now turn to estimate the right hand side of \eqref{j00}. By the estimates \eqref{tes1}--\eqref{tes0} and the identity \eqref{partialF}, we may have
\begin{equation}
\label{j2}
\begin{split}
\j_2&\leq  \norm{F}_0 \norm{\curl_{\ak} v}_3
\leq P\left(\norm{G_0}_4,  \norm{G_0^T\cdot\nabla\eta}_4, \norm{Dv}_3, \norm{G_0^T\cdot\nabla\a^{\kappa}}_3, \norm{\a^{\kappa}}_3\right)
\\&\leq P\left(\sup_{t\in[0,T]} \mathfrak{E}^{\kappa}(t)\right).
\end{split}
\end{equation}
Similarly,
\begin{align}\label{j3}
\nonumber\j_3&\ls \norm{D^3\curl_{\a^{\kappa}}(G_0^T\cdot\nabla \eta)}_0\norm{[D^3\curl_{\a^{\kappa}}, G_0^T\cdot\nabla] v}_0
\\&\leq P\left(\norm{G_0}_4,  \norm{G_0^T\cdot\nabla\eta}_4, \norm{Dv}_3, \norm{G_0^T\cdot\nabla\a^{\kappa}}_3, \norm{\a^{\kappa}}_3\right)
 \leq P\left(\sup_{t\in[0,T]} \mathfrak{E}^{\kappa}(t)\right).
\end{align}

Consequently, plugging the estimates \eqref{j1}--\eqref{j3} into \eqref{j00}, we obtain
\begin{equation} \label{kllj}
\dfrac{d}{dt}\int_{\Omega}\abs{D^3\curl_{\a^{\kappa}}v}^2+\abs{D^3\curl_{\a^{\kappa}}(G_0^T\cdot\nabla \eta)}^2
\le P\left(\sup_{t\in[0,T]} \mathfrak{E}^{\kappa}(t)\right).
\end{equation}
Integrating \eqref{kllj} directly in time, and applying the fundamental theorem of calculous,
\begin{equation}
\norm{\curl  f(t)}_3\le \norm{\curl_{\a^{\kappa}} f(t)}_3+\norm{\int_0^t\partial_t\a^{\kappa} d\tau D f(t)}_3,
\end{equation}
we then conclude the proposition.
\end{proof}

We now derive the divergence estimates.
\begin{proposition}
For $t\in [0,T]$ with $T\le T_\kappa$, it holds that
\begin{equation}
\label{divest}
\norm{\Div v(t)}_3^2+\norm{\Div(G_0^T\cdot\nabla\eta)(t)}_3^2 \leq TP\left(\sup_{t\in[0,T]} \mathfrak{E}^{\kappa}(t)\right).
\end{equation}
\end{proposition}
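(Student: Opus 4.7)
The plan is to express each divergence as a small-in-time perturbation of an algebraic identity that vanishes at $t=0$, then control the time derivatives in $H^3$. Both quantities start from zero: $\Div v(0) = 0$ (as $\Div v_0 = 0$ by hypothesis) and $\Div(G_0^T\cdot\nabla\eta)(0) = \Div G_0^T = 0$ by \eqref{bcond}, so the estimate reduces to showing each grows at most linearly in $T$ in the $H^3$ norm.

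For $\norm{\Div v}_3$: I would use the approximate incompressibility $\divak v = 0$ in \eqref{approximate} to rewrite
\begin{equation*}
\Div v = \partial_i v_i = (\delta_{ij} - \a^\kappa_{ij})\partial_j v_i.
\end{equation*}
Since $\eta^\kappa(0) = \text{Id}$ (the harmonic extension of $\Lambda_\kappa^2\text{Id} = \text{Id}$ is $\text{Id}$ itself), we have $\a^\kappa(0) = I$, so $\delta - \a^\kappa = -\int_0^t \partial_t \a^\kappa\,d\tau$. The identity \eqref{partialF} combined with Lemma \ref{preest} (specifically the bound $\norm{\partial_t \eta^\kappa}_4 \le P$ from \eqref{tes0}) yields $\norm{\partial_t \a^\kappa}_3 \le P(\sup_{[0,T]}\mathfrak{E}^\kappa)$, hence $\norm{\delta - \a^\kappa}_3 \le TP$. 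The $H^3$ Banach algebra property in three dimensions then gives $\norm{\Div v}_3 \le C\norm{\delta - \a^\kappa}_3 \norm{v}_4 \le TP$, and squaring yields $\norm{\Div v}_3^2 \le T^2 P \le TP$ for $T$ bounded.

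For $\norm{\Div(G_0^T\cdot\nabla\eta)}_3$: setting $F = \mathcal{F}G_0$ so that $G_0^T\cdot\nabla\eta = F^T$, the key structural input is the algebraic identity obtained from $F = \mathcal{F}G_0$, $\Div G_0^T = 0$, Piola $\partial_j(J\a_{ij}) = 0$, and $\a_{jl}\partial_m\eta_j = \delta_{ml}$ (the latter being $\mathcal{F}^T\a = I$), namely
\begin{equation*}
\diva F^T = G_0^T \cdot \nabla \log J, \qquad J = \det\nabla\eta,
\end{equation*}
which vanishes in the exact Lagrangian system since there $J \equiv 1$. I would then decompose $\Div F^T = (\delta - \a)\partial F^T + \diva F^T$. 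The first piece is handled exactly as for $\Div v$: since $\a(0) = I$ and $\partial_t\a = -\a\nabla(v + \psi^\kappa)\a$ gives $\norm{\partial_t \a}_3 \le P$, we have $\norm{\delta - \a}_3 \le TP$, and the $H^3$ algebra yields $\norm{(\delta - \a)\partial F^T}_3 \le C\,TP\,\norm{G_0^T\cdot\nabla\eta}_4 \le TP$. For the second piece, $\partial_t J = J\diva(v+\psi^\kappa) = J\bigl[(\a-\a^\kappa)\partial v + \diva\psi^\kappa\bigr]$, where I used $\divak v = 0$ to rewrite $\diva v$; time-integration then gives $\norm{\log J}_3 \le TP$, and using $\partial_k G_{0ki} = (\Div G_0^T)_i = 0$ to rewrite $G_0^T \cdot \nabla \log J = \partial_k(G_{0ki}\log J)$ allows the $T$-smallness to propagate into the $H^3$-bound.

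The most delicate step I foresee is the final $H^3$ control of $\diva F^T = G_0^T \cdot \nabla \log J$: a direct product estimate would demand $\log J \in H^4$, which is not controlled by $\mathfrak{E}^\kappa$. Overcoming this obstacle requires a careful use of the time-integral structure $\log J = \int_0^t \diva(v + \psi^\kappa)\,d\tau$ together with the $T$-smallness of $\a - \a^\kappa$ (arising from $\a(0) = \a^\kappa(0) = I$ and bounded time derivatives), to distribute derivatives so that all terms close at the level of $TP$ in $H^3$.
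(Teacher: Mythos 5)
Your treatment of $\norm{\Div v}_3^2$ is exactly the paper's: both of you use $\divak v = 0$ to write $\Div v = (\delta-\a^\kappa)\partial v$ and exploit $\a^\kappa(0)=I$ via the fundamental theorem of calculus. For $\norm{\Div(G_0^T\cdot\nabla\eta)}_3^2$, however, your route is genuinely different from the paper's, and the difference is where the gap lies. Your algebraic identity $\diva F^T = G_0^T\cdot\nabla\log J$ is correct (it follows from the Piola identity, $\a_{jl}\partial_k\eta_j=\delta_{kl}$ and $\Div G_0^T=0$), and your $H^3$ bound $\norm{\log J}_3\leq TP$ is also fine. But the final step, $\norm{G_0^T\cdot\nabla\log J}_3\leq TP$, does not close: $G_0^T\cdot\nabla\log J = G_{0ki}\partial_k\log J$ contains a full derivative of $\log J$, and with $\log J$ only in $H^3$ this expression lands in $H^2$, not $H^3$. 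Your proposed fix $G_0^T\cdot\nabla\log J=\partial_k(G_{0ki}\log J)$ does not help, since $G_{0ki}\log J$ is only in $H^3$ (the product of $G_0\in H^4$ with $\log J\in H^3$ is $H^3$, not $H^4$), so its gradient is again $H^2$. Pushing the time derivative through also fails: $\partial_t(G_0^T\cdot\nabla\log J)=G_0^T\cdot\nabla\diva(v+\psi^\kappa)$, and the problematic piece $G_0^T\cdot\nabla\bigl((\a-\a^\kappa)\partial v\bigr)$ requires either $\a-\a^\kappa\in H^4$ or $G_0^T\cdot\nabla v\in H^4$, neither of which is controlled by $\mathfrak{E}^\kappa$. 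The $T$-smallness of $\a-\a^\kappa$ in $H^3$ does not compensate for the missing derivative.

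The paper sidesteps this entirely by working with $\divak$ rather than $\diva$. It uses $\divak v=0$ directly at the operator level: $\divak(G_0^T\cdot\nabla v)=[\divak,G_0^T\cdot\nabla]v$, so the potentially derivative-losing term is replaced wholesale by a commutator, which is lower order. Together with $v=\partial_t\eta-\psi^\kappa$ this yields \eqref{divb}, whose right-hand side consists of $\divak(G_0^T\cdot\nabla\psi^\kappa)$ (controlled in $H^3$ by \eqref{fest22}), the commutator $[\divak,G_0^T\cdot\nabla]v$, and $\partial_t\a^\kappa_{il}\partial_l(G_0^T\cdot\nabla\eta_i)$, all genuinely in $H^3$ with $\kappa$-independent bounds. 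A $D^3$ energy estimate then gives $\norm{\divak(G_0^T\cdot\nabla\eta)}_3^2\leq TP$, and the passage from $\divak$ to $\Div$ is the same FTC argument you used for $\Div v$. If you want to repair your argument, the cleanest route is to abandon the $\log J$ identity and simply mimic your $\Div v$ step with $\a^\kappa$ in place of $\a$, then estimate $\norm{\divak(G_0^T\cdot\nabla\eta)}_3$ by differentiating in time and invoking $\divak v=0$ as in \eqref{divb} — which is precisely what the paper does.
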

\begin{proof}
From $\divak v=0$, we see that
\begin{equation}
\Div v=-\int_0^t\partial_t\a^{\kappa}_{ij}\,d\tau\partial_jv_i.
\end{equation}
Hence, it is clear that by the identity \eqref{partialF} and the estimates \eqref{tes1} and \eqref{tes0},
\begin{equation}\label{divv1}
\norm{\Div v(t)}_{3}^2\le TP\left(\sup_{t\in[0,T]} \mathfrak{E}^{\kappa}(t)\right).
\end{equation}

From $\divak v=0$ again, we have
\begin{equation*}
\Div_{\a^{\kappa}}(G_0^T\cdot\nabla v)=[\Div_{\a^{\kappa}},G_0^T\cdot\nabla] v.
\end{equation*}
This together with the equation $v=\partial_t\eta-\fk$, we have
\begin{equation}
\label{divb}
\begin{split}
\partial_t\left(\Div_{\ak}\left(G_0^T\cdot \nabla \eta\right)\right)=\Div_{\a^{\kappa}}\left(G_0^T\cdot\nabla \fk\right)+[\Div_{\a^{\kappa}},G_0^T\cdot\nabla] v+ \partial_t \a^{\kappa}_{i\ell} \partial_{\ell}\left(G_0^T\cdot\nabla\eta_i\right).
\end{split}
\end{equation}
This implies that, by doing the $D^3$ energy estimate and using the estimates \eqref{tes1}--\eqref{fest22} and the identity \eqref{partialF},
\begin{equation}\label{di2}
\norm{\Div_{\ak}\left(G_0^T\cdot \nabla \eta\right)}_{3}^2
  \leq  TP\left(\sup_{t\in[0,T]} \mathfrak{E}^{\kappa}(t)\right).
\end{equation}
And then applying the fundamental theorem of calculous, and
\begin{equation}
\norm{\Div  f(t)}_3\le \norm{\Div_{\a^{\kappa}} f(t)}_3+\norm{\int_0^t\partial_t\a^{\kappa} d\tau D f(t)}_3,
\end{equation}
we arrive at
\begin{equation}\label{divv12}
\norm{\Div(G_0^T\cdot \nabla \eta)}_{3}^2
  \leq  TP\left(\sup_{t\in[0,T]} \mathfrak{E}^{\kappa}(t)\right).
\end{equation}

Consequently, we conclude the proposition by the estimates \eqref{divv1} and \eqref{divv12}.
\end{proof}

\subsubsection{Synthesis}

We now collect the estimates derived previously to conclude our estimates and also verify the a priori assumptions \eqref{ini2} and \eqref{inin3}. That is, we shall now present the
\begin{proof}[Proof of Theorem \ref{th43}]
It follows from the normal trace estimates \eqref{gga} that
\begin{equation}
\abs{\bar\partial^4 v\cdot N}_{-1/2}\ls \norm{\bar\partial^4 v}_0+\norm{\Div\bar\partial^3v}_0.
\end{equation}
Combining this and the estimates \eqref{00estimate}, \eqref{teee}, \eqref{curlest} and \eqref{divest},  by using the Hodge-type elliptic estimates \eqref{hodd} of Lemma \ref{hodge}, we obtain
\begin{align}
\norm{v}_4^2 \ls \norm{v}_0^2+\norm{\Div v}_{3}^2+\norm{\curl v}_{3}^2+\abs{\bar\partial v\cdot N}_{5/2}^2
\leq M_0+TP\left(\sup_{t\in[0,T]} \mathfrak{E}^{\kappa}(t)\right).
\end{align}
Similarly, we have
\begin{align}
\norm{G_0^T\cdot\nabla\eta}_4^2 \leq M_0+TP\left(\sup_{t\in[0,T]} \mathfrak{E}^{\kappa}(t)\right).
\end{align}
By these two estimates and \eqref{etaest}, \eqref{teee}, we finally get that
\begin{equation*}
\sup_{[0,T]}\mathfrak{E}^{\kappa}(t)\leq M_0+TP\left(\sup_{t\in[0,T]} \mathfrak{E}^{\kappa}(t)\right).
\end{equation*}
This provides us with a time of existence $T_1$ independent of $\kappa$ and an estimate on $[0,T_1]$ independent of $\kappa$ of the type:
\begin{equation}
\sup_{[0,T_1]}\mathfrak{E}^{\kappa}(t)\leq 2M_0.
\end{equation}

The proof of Theorem \ref{th43} is thus completed.
\end{proof}

\section{Proof of Theorem \ref{mainthm}}
\begin{proof}[Proof of Theorem \ref{mainthm}]
For each $\kappa>0$, we can construct the solutions to the $\kappa$-approximate system \eqref{approximate}  by a similar way in \cite[Section 5]{GuW_2016}. Briefly, we linearized the $\kappa$-approximate system and solve the linearized system by an aritifial viscosity method. Then a contract map method tells the existence of solutions to $\kappa$-approximate system \eqref{approximate}. Then the the existence of solutions to \eqref{eq:mhd} follows by taking $\kappa\rightarrow 0$.
We omit the details here.
\end{proof}

\section*{Conflict of Interest}
The authors declare that they have no conflict of interest.

\end{document}